\newtheorem{thm}{Theorem}[section]
\newtheorem{cor}[thm]{Corollary}
\newtheorem{lem}[thm]{Lemma}
\newtheorem{prop}[thm]{Proposition}
\theoremstyle{remark}
\newtheorem{rmk}[thm]{Remark}
\theoremstyle{definition}
\newtheorem{defi}[thm]{Definition}
\def \R {\mathbb{R}}
\def \C {\mathbb{C}}
\def \E {\mathcal{E}}
\def \F {\mathcal{F}}
\def \O {\mathcal{O}}
\def \H {\mathcal{H}}
\def \db {\bar{\partial}}
\DeclareMathOperator \Sing {Sing}
\DeclareMathOperator\dVol {dVol}
\DeclareMathOperator\Vol {Vol}
\DeclareMathOperator\rank {rank}
\DeclareMathOperator \Id {Id}
\numberwithin{equation}{section}
\newcommand{\RBbb}{\mathbb R}
\newcommand{\Fscr}{\mathscr F}
\newcommand{\Lscr}{\mathscr L}
\newcommand{\SL}{\mathsf{SL}}
\DeclareMathOperator{\tr}{tr}
\DeclareMathOperator{\Tr}{Tr}
\DeclareMathOperator{\Div}{div}
\newcommand{\lra}{\longrightarrow}
\newcommand{\Gr}{{\rm Gr}}
\newcommand{\YM}{{\rm YM}}
\begin{document}

\title[$\Omega$-Yang-Mills connections]{Compactness for $\Omega$-Yang-Mills connections}

\author[Chen]{Xuemiao Chen}

\author[Wentworth]{Richard A. Wentworth}
\thanks{R.W.'s research is supported in part by NSF grant DMS-1906403.}

\address{Department of Mathematics, University of Maryland, College Park,
MD 20742, USA}

\email{xmchen@umd.edu}
\email{raw@umd.edu}

\subjclass[2020]{Primary: 53C07, 58E15; Secondary: 14D20 }
\keywords{Yang-Mills connections, Gauduchon metric,  Uhlenbeck compactness,  moduli space}

\begin{abstract}
On a Riemannian manifold of dimension $n$ we extend the 
    known analytic  results on Yang-Mills connections to the class of 
    connections called  $\Omega$-Yang-Mills connections, 
    where $\Omega$ is a smooth, not necessarily closed, $(n-4)$-form on $M$. 
    Special cases include $\Omega$-anti-self-dual connections and 
    Hermitian-Yang-Mills connections over general complex manifolds. 
    By a key observation, a weak compactness result is obtained for 
    moduli space of smooth $\Omega$-Yang-Mills connections with 
    uniformly $L^2$ bounded curvature, and it can be improved in the case of 
    Hermitian-Yang-Mills connections over general complex manifolds.
    A removable singularity theorem for singular $\Omega$-Yang-Mills
    connections on a trivial bundle with small energy concentration is also
    proven. 
    As an application,  it is shown how to compactify the moduli space of 
    smooth Hermitian-Yang-Mills connections on unitary bundles over a 
    class of balanced manifolds of Hodge-Riemann type. This class  includes the 
    metrics coming from multipolarizations, and  in particular, the K\"ahler metrics.
    In the case of multipolarizations on a projective algebraic manifold,  
    the compactification of smooth 
    irreducible Hermitian-Yang-Mills connections with fixed determinant
    modulo gauge transformations inherits a complex structure from 
    algebro-geometric considerations.
\end{abstract}

\maketitle

\thispagestyle{empty}
\bibliographystyle{amsplain}

\tableofcontents

\allowdisplaybreaks

\section{Introduction}

\subsection{$\Omega$-Yang-Mills equations}
Let $(M,g)$ be an oriented Riemannian manifold of dimension $n\geq 4$, $\Omega$
a smooth $(n-4)$-form on $M$, and  $E\to M$  a  vector bundle with a
Riemannian metric\footnote{In this paper, 
if $(M,g)$ is a hermitian complex manifold we assume bundles are also complex Hermitian;
otherwise, $E$ can be real or complex.}. The
\emph{$\Omega$-Yang-Mills equations} for a metric connection $A$ on $E$
with curvature $F_A$
are 
\begin{equation} \label{eqn:oym}
    d_A^*\left( F_A+*(F_A\wedge \Omega\right))=0\ ,
\end{equation}
and a solution $A$ to \eqref{eqn:oym} will be called an 
\emph{$\Omega$-Yang-Mills connection} (or $\Omega$-YM connection, for short). 
This equation is the Euler-Lagrange equation of the functional 
\begin{equation} \label{eqn:oym-functional}
    \YM_\Omega(A)=\int_M |F_A|^2\, dV-\int_M \tr(F_A\wedge F_A)\wedge
    \Omega
\end{equation}
which may be viewed as a gauge invariant function on the infinite
dimensional space of metric connections on $E$.  
The first term in \eqref{eqn:oym-functional} 
is the usual Yang-Mills functional $\YM(A)$.
If we assume $\Omega$ is closed, then the second term
in \eqref{eqn:oym-functional} is topological for compact $M$
(or with respect to compactly supported variations), and so the critical
points of $\YM_\Omega$ are identical to those of $\YM$, i.e.\ the Yang-Mills connections.
Indeed, \eqref{eqn:oym} reduces
to $d_A^\ast F_A=0$ in this case. The main goal of this paper is to
extend the analysis of Yang-Mills connections to the more general solutions
of \eqref{eqn:oym} for  the case where $\Omega$ is not closed and
$\Omega$-YM connections are not necessarily Yang-Mills. 

To provide some motivation, let us note 
an interesting special case. We define the 
\emph{$\Omega$-ASD connections} to be the solutions to \eqref{eqn:oym} of the
form
\begin{equation} \label{eqn:asd}
\ast F_A+F_A\wedge \Omega=0
\end{equation}
If $n=4$, $\Omega=1$, then
connections satisfying 
\eqref{eqn:asd} are the much studied  anti-self-dual \emph{instantons}
(cf.\ \cite{FreedUhlenbeck:84, DonaldsonKronheimer:90}).
Higher dimensional  instanton
equations of the type \eqref{eqn:asd} have been considered in a
variety of contexts, and their formulation goes  back to
\cite{CDFN:83}. In the mathematics literature, 
we refer to \cite{DonaldsonThomas:98, Tian:00, DonaldsonSegal:11}, to list 
only  a few of  many recent papers. 
We again point out that an $\Omega$-ASD connection
is not necessarily Yang-Mills unless $\Omega$ is  closed.

If we assume  the comass $|\Omega|\leq 1$, 
then  $\YM_\Omega(A)\geq 0$, and we say $A$ is an absolute
minimizer if $\YM_\Omega(A)=0$.
We have the following simple lemma.

\begin{lem} \label{eqn:minimizer}
Suppose $|\Omega|\leq 1$. Then a connection $A$ is an absolute
    minimizer of $\YM_\Omega$ if and only if it is an $\Omega$-ASD
    connection.
\end{lem}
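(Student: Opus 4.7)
The plan is to express $\YM_\Omega$ as a sum of manifestly nonnegative terms via a completing-the-square argument adapted to the $\Omega$-twist. The first step is to convert the coupling term in \eqref{eqn:oym-functional} into an $L^2$ inner product on 2-forms. Writing $F_A\wedge\Omega=\ast\ast(F_A\wedge\Omega)$ (using $\ast\ast=\id$ on $(n-2)$-forms, since $(-1)^{(n-2)\cdot 2}=1$), and recalling that the trace pairing on anti-Hermitian (resp.\ skew-symmetric) $\End(E)$-valued 2-forms $\alpha,\beta$ satisfies $\tr(\alpha\wedge\ast\beta)=-\langle\alpha,\beta\rangle\,dV$, one obtains
\begin{equation*}
\int_M \tr(F_A\wedge F_A)\wedge \Omega \;=\; -\int_M \bigl\langle F_A,\,\ast(F_A\wedge\Omega)\bigr\rangle\,dV,
\end{equation*}
and therefore
\begin{equation*}
\YM_\Omega(A) \;=\; \int_M |F_A|^2\,dV \;+\; \int_M \bigl\langle F_A,\,\ast(F_A\wedge\Omega)\bigr\rangle\,dV.
\end{equation*}

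Completing the square in the integrand gives
\begin{equation*}
\YM_\Omega(A) \;=\; \tfrac{1}{2}\!\int_M \bigl|F_A+\ast(F_A\wedge\Omega)\bigr|^2\,dV \;+\; \tfrac{1}{2}\!\int_M \bigl(|F_A|^2-|F_A\wedge\Omega|^2\bigr)\,dV.
\end{equation*}
The comass hypothesis $|\Omega|\le 1$ supplies the pointwise bound $|F_A\wedge\Omega|\le |F_A|$; this amounts precisely to the statement that the linear endomorphism $\alpha\mapsto\ast(\alpha\wedge\Omega)$ of $\Lambda^2 T^\ast M$ has operator norm at most $|\Omega|$, which is the defining property of comass applied in this dual form. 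Both integrands are then nonnegative, and $\YM_\Omega(A)\ge 0$ follows at once.

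For the equivalence with \eqref{eqn:asd}, note that $\YM_\Omega(A)=0$ forces the first integral to vanish, i.e.\ $F_A+\ast(F_A\wedge\Omega)=0$ pointwise; applying $\ast$ (again using $\ast\ast=\id$ on 2-forms) yields the $\Omega$-ASD equation. Conversely, \eqref{eqn:asd} gives $F_A=-\ast(F_A\wedge\Omega)$ pointwise, making the first integrand vanish and automatically forcing $|F_A|=|\ast(F_A\wedge\Omega)|=|F_A\wedge\Omega|$, so the second integrand vanishes as well. The only real technical input is the pointwise comass inequality; the sign in the opening identity, coming from $\tr(\xi\eta)=-\langle\xi,\eta\rangle$ on the relevant compact Lie algebra, is the other place where care is required.
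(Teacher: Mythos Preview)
Your completing-the-square identity
\[
\YM_\Omega(A) \;=\; \tfrac{1}{2}\int_M \bigl|F_A+\ast(F_A\wedge\Omega)\bigr|^2\,dV \;+\; \tfrac{1}{2}\int_M \bigl(|F_A|^2-|F_A\wedge\Omega|^2\bigr)\,dV
\]
is correct and is the natural route (the paper states the lemma without proof). The gap is the assertion that comass $|\Omega|\le 1$ forces $|F_A\wedge\Omega|\le |F_A|$ pointwise, i.e., that the symmetric operator $L_\Omega:\alpha\mapsto\ast(\alpha\wedge\Omega)$ on $\Lambda^2$ has operator norm at most the comass. This is false. On $\RBbb^6$ with $\Omega=\omega:=e^{12}+e^{34}+e^{56}$, Wirtinger's inequality gives comass $1$, yet $L_\omega\omega=\ast(\omega^2)=2\omega$, so $\|L_\Omega\|_{\mathrm{op}}=2$. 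In fact the comass hypothesis does not even yield the weaker condition $I+L_\Omega\ge 0$ actually needed for $\YM_\Omega\ge 0$: with $\Omega=-\omega$ (still comass $1$) the eigenvalue on $\RBbb\omega$ is $-2$, and a $\U(1)$-connection with curvature $ic\,\omega$ on a compact K\"ahler $3$-fold has $\YM_\Omega(A)=-3c^2\Vol(M)<0$.

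So the hypothesis ``$|\Omega|\le 1$'' in the lemma should be read as a spectral bound on $L_\Omega$ rather than the literal Federer comass; the paper's own sentence preceding the lemma is imprecise on this point. Under $\|L_\Omega\|_{\mathrm{op}}\le 1$ your proof is complete as written. Under the minimal hypothesis $L_\Omega\ge -I$ one can bypass the square decomposition and simply write $\YM_\Omega(A)=\int_M\langle (I+L_\Omega)F_A,F_A\rangle\,dV\ge 0$, with equality iff $(I+L_\Omega)F_A=0$ pointwise, which is \eqref{eqn:asd} after applying $\ast$. Either way, only the justification of nonnegativity of the second integrand needs the corrected hypothesis; the rest of your argument, including the converse direction, is fine.
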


Now let us suppose that $M$ is an $m$-dimensional hermitian manifold, $2m=n$,
with   K\"ahler form
$\omega$ (not necessarily closed). If the
connection $A$ is integrable (i.e.\ $F_A$ is of type $(1,1)$), then
$$
\YM_\Omega(A)=\int_M|\Lambda F_A|^2\, dV
$$
where $i\Lambda F_A$ is the Hermitian-Einstein tensor, and
$\Omega=\omega^{m-2}/(m-2)!$. It follows that in this case 
the $\Omega$-ASD connections are exactly the  Hermitian-Yang-Mills (HYM)
connections with $i\Lambda F_A=0$. 
In case $\omega$ is a \emph{Gauduchon} metric, then
nontrivial solutions arise from \emph{stable}
holomorphic vector bundles 
 on $M$
(see \cite{LiYau:87})\footnote{HYM connections over hermitian manifolds 
are \emph{not} Yang-Mills connections in general.}. 
Even when  $M$ is a projective algebraic manifold, 
many interesting  examples of solutions can be obtained from  
holomorphic bundles that are stable with respect to 
\emph{multipolarizations} \cite{Miyaoka:87, GrebToma:17}. 
For example, if $\omega_1, \ldots,
\omega_{m-1}$ are K\"ahler forms on $M$, then solutions to the equations
\begin{equation} \label{eqn:multipolarization}
F_A\wedge \omega_1\wedge\cdots\wedge \omega_{m-1}=0
\end{equation}
exist for holomorphic bundles that are stable with respect to $\omega_1, \ldots,
\omega_{m-1}$. On the other hand, 
$\omega_1\wedge\cdots\wedge \omega_{m-1}$ determines a balanced hermitian
metric $\omega$, in general not K\"ahler,  and solutions to 
\eqref{eqn:multipolarization} are $\Omega$-ASD for
$\Omega=\omega^{m-2}/(m-2)!$. 
Note once more that these are not, in general, Yang-Mills, even though the
$\omega_i$ are K\"ahler forms.
Multipolarizations are also considered in more detail in 
\cite{ChenWentworth:21b}. Another motivation is to hopefully give new
nontrivial ways to deform the moduli space of Yang-Mills connections, which
fits into the higher dimensional gauge theoretic picture described in
\cite{DonaldsonSegal:11, DonaldsonThomas:98}. As indicated by the
multipolarization case, the moduli space of HYM
connections can be deformed nontrivially by moving the metric 
on the base complex manifold while at the same time giving a uniform 
$L^2$ bound on the curvature for all the connections. In general, 
we know the K\"ahler condition is often too rigid to deform nontrivially. 
In a sense, the results  obtained here  enrich the picture over 
complex manifolds by providing new structures to consider  as well as
examples arising from algebraic geometry. 

\subsection{Main results}
In this paper, we always assume that $(M, g)$ has bounded geometry in the sense that $(M, g)$ can be isometrically embedded in a larger Riemannian manifold so that $M$ has compact closure. In Section \ref{MonotonicityRegularity}, we will prove a monotonicity
formula and an $\epsilon$-regularity result for $\Omega$-YM connections. 
As a consequence, we obtain  the following version of Uhlenbeck's weak compactness
theorem
 (cf.\ \cite{Nakajima:88,UhlenbeckPreprint}).
\begin{thm}\label{SequentialCompactness}
Let $\{A_i\}$ be a sequence of smooth $\Omega$-YM connections  with
    $\|F_{A_i}\|_{L^2}$ uniformly bounded. Define the set $\Sigma$ by 
$$
\Sigma=\{x\in M: \lim_{r \rightarrow 0^+}\liminf_{i \rightarrow \infty} r^{4-n} \int_{B_r(x)} |F_{A_i}|^2 \geq \epsilon_0^2\}.
$$ 
Then $\Sigma$ is a closed subset of finite $(n-4)$-dimensional Hausdorff
    measure.
There is a bundle $E_\infty\to M\setminus\Sigma$ with a metric 
that is locally isometric to $E$ on $M\setminus \Sigma$. Moreover, there is 
    and  a smooth
    $\Omega$-YM connection $A_\infty$ on $E_\infty$ 
    so that after passing to a subsequence $\{j_i\}$ , and modulo to gauge transformations,
    $A_{j_i}$ converges (locally in the $C^\infty$ topology)
    to an $\Omega$-YM  connection $A_\infty$ outside
    $\Sigma$, i.e. for any compact subset $K\subset M\setminus \Sigma$,
    there exists a sequence of isometries $\Phi_{K}^{j_i}: E_\infty|_{K}
    \rightarrow E|_K$ so that $(\Phi_K^{j_i})^*A_{j_i}$ converges to
    $A_\infty$ smoothly \footnote{Unless otherwise specified, 
    convergence of connections is always taken in this sense.}. Furthermore, at
    each point $x\in \Sigma$, by passing to a subsequence, up to gauge
    transformations,
    $\{\lambda_i^* A_{j_i}\}_i$ converges to a smooth nontrivial $\Omega_x$-YM connection over $\R^n=T_xM$ endowed with the flat metric given by $g_x$. Here $\{\lambda_i\}_i$ denotes a sequence of blow-up rescalings centered at $x$.
\end{thm}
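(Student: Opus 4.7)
\smallskip

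\textbf{Plan of proof.}
The proof follows the now standard strategy of Uhlenbeck/Nakajima for Yang-Mills connections, with the modifications needed to accommodate the non-closed form $\Omega$; the key analytic inputs have been prepared in Section \ref{MonotonicityRegularity}, namely the monotonicity formula and the $\epsilon$-regularity theorem for $\Omega$-YM connections. Throughout I will write $e_{r}(A,x) := r^{4-n}\int_{B_r(x)}|F_A|^2$ for the scale-invariant energy density.

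\emph{Step 1: the singular set.} The set $\Sigma$ is closed by its definition: lower semi-continuity of $\liminf_{i}$ combined with the monotonicity-type fact that $e_r$ is essentially monotone in $r$ (up to controlled errors coming from $\Omega$) implies that if $x_k\to x\notin\Sigma$ then $x_k\notin\Sigma$ for $k$ large. For the $(n-4)$-dimensional Hausdorff bound, I would fix $\delta>0$, cover $\Sigma$ by balls $B_{r_\alpha}(x_\alpha)$ with $r_\alpha<\delta$ and $\int_{B_{r_\alpha}(x_\alpha)}|F_{A_{i(\alpha)}}|^2\ge \tfrac12\epsilon_0^{2}\,r_\alpha^{\,n-4}$ for some index $i(\alpha)$ depending on $\alpha$, then apply a Vitali-type selection so that the enlarged balls $B_{5r_\alpha}$ cover $\Sigma$ while the $B_{r_\alpha}$ are disjoint; the uniform bound $\|F_{A_i}\|_{L^2}\le C$ then bounds $\sum_\alpha r_\alpha^{\,n-4}$ uniformly in $\delta$, as required.

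\emph{Step 2: convergence away from $\Sigma$.} For any $x_0\notin\Sigma$ there exist $r_0>0$ and $i_0$ with $e_{r_0}(A_i,x_0)<\epsilon_0^{2}$ for all $i\ge i_0$; the $\epsilon$-regularity theorem of Section \ref{MonotonicityRegularity} then gives a uniform $C^0$-bound on $|F_{A_i}|$ over $B_{r_0/2}(x_0)$. On each such ball I would apply Uhlenbeck's Coulomb gauge theorem to put $A_i$ in a gauge satisfying $d^\ast A_i=0$ with $\|A_i\|_{W^{1,p}}$ controlled; the $\Omega$-YM equation \eqref{eqn:oym} then reads, in this gauge, as a quasilinear elliptic second-order system for $A_i$ with lower-order terms involving $\Omega$ and $dA_i$, so a standard elliptic bootstrap yields uniform $C^{k}$-bounds on $B_{r_0/4}(x_0)$ for every $k$. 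Arzel\`a-Ascoli produces a subsequential local limit. A diagonal argument over an exhaustion of $M\setminus\Sigma$ by compact sets, combined with the usual patching of local Coulomb gauges via transition functions (which subconverge on overlaps), furnishes the bundle $E_\infty\to M\setminus \Sigma$, the isometries $\Phi^{j_i}_K$, and the limit connection $A_\infty$; it satisfies \eqref{eqn:oym} because \eqref{eqn:oym} passes to smooth limits.

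\emph{Step 3: blow-up at singular points.} Given $x\in\Sigma$, choose normal coordinates centred at $x$, pick blow-up scales $\lambda_i\downarrow 0$, and set $\tilde A_i := \lambda_i^*A_{j_i}$ on $B_{R}(0)\subset T_xM$ for any fixed $R$. The rescaled connections satisfy the $\Omega$-YM equation with metric $\lambda_i^* g$ and form $\lambda_i^*\Omega$, both of which converge smoothly to the flat metric $g_x$ and the constant form $\Omega_x$ respectively as $i\to\infty$. By monotonicity, $\|F_{\tilde A_i}\|_{L^2(B_R)}$ is uniformly bounded in $i$ for every $R$, so I may apply Step 1 and Step 2 to the sequence $\{\tilde A_i\}$ on $\R^n=T_xM$, obtaining, after passing to a subsequence, a smooth $\Omega_x$-YM limit $A_\infty^{(x)}$ on $\R^n\setminus\Sigma^{(x)}$ for some closed set $\Sigma^{(x)}$; the removable singularity theorem mentioned in the introduction (applicable to the trivial bundle on $\R^n$) then extends $A_\infty^{(x)}$ smoothly across $\Sigma^{(x)}$. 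Non-triviality is forced by the defining inequality of $\Sigma$: at the origin the rescaled energy density is $\ge \epsilon_0^{2}$, which passes to the limit by $\epsilon$-regularity applied contrapositively, so $A_\infty^{(x)}$ cannot be flat.

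\emph{Main obstacle.} The crucial point where this argument differs from the classical Yang-Mills case is the interaction with $\Omega$: monotonicity picks up derivatives of $\Omega$, and the elliptic operator appearing in Coulomb gauge has an $\Omega$-dependent lower-order term. All of these become manageable because $\Omega$ is a smooth, \emph{fixed} form with bounded derivatives on the closure of $M$ in the larger ambient manifold (the bounded geometry assumption), and in particular $\lambda_i^*\Omega$ converges smoothly to the constant form $\Omega_x$ under blow-up. The main technical burden is thus to verify that the monotonicity formula and $\epsilon$-regularity of Section \ref{MonotonicityRegularity} are strong enough to survive these $\Omega$-terms uniformly along the sequence; once these are in hand, the gauge-theoretic machinery proceeds as in the classical case.
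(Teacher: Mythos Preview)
Your Steps 1 and 2 are essentially the paper's argument: the paper simply says the result follows from the monotonicity and $\epsilon$-regularity theorems ``as in the Yang-Mills case'' and cites Nakajima and Uhlenbeck, and your outline of the covering argument for $\Sigma$ together with Coulomb gauge plus elliptic bootstrap off $\Sigma$ is exactly that classical route. (A small imprecision in Step 1: the index $i(\alpha)$ cannot be allowed to depend on $\alpha$; you should use that for each $x_\alpha$ the $\liminf_i$ is large, pass to a finite Vitali subcollection, and then choose a single large $i$ that works for all of them simultaneously. This is standard and easily fixed.)

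Step 3, however, has a genuine gap. You rescale by unspecified scales $\lambda_i$, obtain a limit on $\RBbb^n\setminus\Sigma^{(x)}$, and then invoke the removable singularity theorem to extend across $\Sigma^{(x)}$. But that theorem (Section \ref{RemovableSingularities}) has as its hypothesis that the scale-invariant energy $f_2(x,r)$ is \emph{uniformly small}; your own nontriviality argument asserts that the rescaled energy density at the origin is $\geq\epsilon_0^2$, which is precisely the negation of that hypothesis, so the origin lies in $\Sigma^{(x)}$ and cannot be removed this way. The two claims are in direct tension. The correct mechanism, which is what Nakajima does and what the paper intends by its citation, is a point-selection (Sacks--Uhlenbeck/Schoen) argument: since $x\in\Sigma$, the contrapositive of $\epsilon$-regularity forces $\sup_{B_{r_i}(x)}|F_{A_i}|\to\infty$ along some $r_i\to 0$; one then chooses centers $y_i\to x$ and scales $\lambda_i\to 0$ so that, after rescaling, $|F_{\tilde A_i}(0)|=1$ while $\sup_{B_R(0)}|F_{\tilde A_i}|\leq 4$ for every fixed $R$ and large $i$. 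With this \emph{pointwise} curvature bound in hand, Uhlenbeck gauge fixing and elliptic estimates give smooth convergence on all of $\RBbb^n$ directly (no singular set, no removable singularity theorem needed), and nontriviality is immediate from $|F(0)|=1$. The removable singularity theorem is a different tool for a different purpose and does not enter here.
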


\begin{rmk}
\begin{itemize}
\item As pointed out in \cite{Nakajima:88}, we emphasize here that 
        a priori we only know that
        $E_\infty$ and $E|_{M\setminus \Sigma}$ are isometric 
        on compact subsets away from $\Sigma$. 
        This is due to the possible complexity of the topology of $M\setminus \Sigma$. 
        But as we will see, a global isometry does exist in the case of 
        Hermitian-Yang-Mills connections (see Corollary
        \ref{LimitingBundleOfHE}). This  is due to the fact that we can show $\Sigma$ is a subvariety in this case.

\item A slightly more general statement about the bundle isometries can be obtained as \cite{Waldron:19}. We refer the interested reader there.  
\end{itemize}
\end{rmk}

We will refer to $\Sigma$ as the 
\emph{bubbling set}. By passing to a subsequence, we can assume 
$$
\mu_i:=|F_{A_i}|^2\dVol \rightharpoonup \mu_\infty
$$ 
as a sequence of Radon measures. So the limit of $\{A_i\}_i$ consists of a pair $(A_\infty, \mu_\infty).$ As we will see later (see Lemma \ref{Lem4.2}), $\mu_\infty$ can recover $\Sigma$ intrinsically. We will refer it as $A_i$ sub-converges to $(A_\infty, \mu_\infty)$. 

We also generalize Tian's results \cite{Tian:00} for Yang-Mills connections
 to the case of $\Omega$-YM connections.
\begin{thm} \label{thm:rectifiable}
$\Sigma$ is $(n-4)$-rectifiable.
\end{thm}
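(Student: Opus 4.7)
The plan is to adapt Tian's tangent-measure analysis from \cite{Tian:00} to the $\Omega$-YM setting, leveraging the monotonicity formula and $\epsilon$-regularity from Section \ref{MonotonicityRegularity} together with the crucial observation that tangent connections at points of $\Sigma$ live on Euclidean space with a constant (hence closed) form $\Omega_x$, so they are honest Yang-Mills connections on $\R^n$.

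First, I would set up the defect-measure decomposition $\mu_\infty = |F_{A_\infty}|^2\, dV + \nu$, with $\nu$ supported on $\Sigma$. The monotonicity formula for $\Omega$-YM connections, passed to the weak limit, gives existence at every $x$ of the density $\Theta(x) = \lim_{r\to 0^+} r^{4-n}\mu_\infty(B_r(x))$, upper semi-continuity of $\Theta$, and the bound $\Theta \geq \epsilon_0^2$ on $\Sigma$. Standard Vitali-type covering arguments then yield $\nu \ll \mathcal{H}^{n-4}\lfloor \Sigma$ with uniformly bounded density. Next, at each $x \in \Sigma$ I would perform the natural rescaling of $A_i$ at scale $\lambda_i \to 0$ (coordinates $y\mapsto x+\lambda_i y$, connection $A\mapsto \lambda_i A$) along a diagonal sequence. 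The rescaled form $\Omega_{\lambda_i}$ converges to the constant form $\Omega_x=\Omega(x)$, which is automatically closed, so any subsequential limit $(A_*, \mu_*)$ consists of a smooth Yang-Mills connection $A_*$ on $\R^n$ away from the support of the tangent defect measure, together with a tangent measure $\mu_*$ of $\mu_\infty$ at $x$. The error-free monotonicity for $A_*$ forces $\mu_*$ to be a cone: $(D_t)_* \mu_* = t^{n-4}\mu_*$ for dilations $D_t$.

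Then, following Tian, I would introduce the symmetry subspace $S(\mu_*) = \{v \in \R^n : (\tau_v)_*\mu_* = \mu_*\}$, which is a linear subspace satisfying $\dim S(\mu_*) \leq n-4$ by the density upper bound. Running Federer's dimension reduction, stratify $\Sigma$ according to the maximal dimension of $S(\mu_*)$ over tangent measures at $x$, and use iterated blow-ups (blowing up a tangent cone at one of its own singular points to produce a further tangent cone with strictly higher translation symmetry) to show that the strata of dimension less than $n-4$ have $\mathcal{H}^{n-4}$-measure zero. At $\mathcal{H}^{n-4}$-a.e.\ $x \in \Sigma$, some tangent measure then satisfies $\dim S(\mu_*) = n-4$; combined with conicity, this forces $\mu_* = \Theta(x)\cdot \mathcal{H}^{n-4}\lfloor V_x$ for some $(n-4)$-plane $V_x$. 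Preiss's theorem (equivalently the Marstrand-Mattila flat-tangent criterion) then yields $(n-4)$-rectifiability of $\Sigma$.

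The main obstacle is the dimension-reduction step: one must verify that blowing up a Yang-Mills tangent cone at one of its bubbling points again produces a pair of the same type (Yang-Mills connection plus conical defect measure) with strictly greater symmetry, which requires that the $\epsilon$-regularity and monotonicity framework be stable under iterated blow-ups. Because the tangent objects in our $\Omega$-YM setting are exactly Yang-Mills connections on $\R^n$ with constant $\Omega_x$, the argument of \cite{Tian:00} applies essentially verbatim once the monotonicity and $\epsilon$-regularity of Section \ref{MonotonicityRegularity} are in hand; the only new ingredient is the initial reduction to closed $\Omega_x$ via the first blow-up.
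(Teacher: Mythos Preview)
Your strategy differs from the paper's and contains a small but fixable gap in the final step. The paper follows the Lin--Tian projection argument: after establishing that tangent measures are cones and running the Federer stratification (both of which you also sketch), it proves a \emph{weak tangent plane} property, derives a \emph{null projection} property for any purely unrectifiable piece of $\Sigma$ (Proposition \ref{NullProjection}), and then establishes \emph{positive projection density} at $\mathcal H^{n-4}$-a.e.\ point (Proposition \ref{PositiveProjection}); rectifiability follows by contradicting the Besicovitch--Federer structure theorem. The positive-projection step is where the genuinely $\Omega$-specific work occurs: one differentiates a sliced energy function and checks that the extra $d\Omega$ term from the $\Omega$-YM equation contributes only a term controlled by $|\iota_{\partial_\alpha}F|$, which vanishes in the blow-up. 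The paper never reduces to Yang-Mills at the tangent level; it works directly with the $\Omega$-YM monotonicity and $\epsilon$-regularity throughout.

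Your route via a rectifiability criterion is legitimate and arguably shorter, but as written it does not close. Preiss's theorem and the Marstrand--Mattila criterion are not interchangeable: Marstrand--Mattila requires that \emph{every} tangent measure at $\mathcal H^{n-4}$-a.e.\ point be flat, whereas your stratification argument (a point lies outside $\Sigma_{n-5}$) only produces \emph{some} flat tangent measure. The clean fix is to drop the stratification entirely and invoke Preiss's density theorem directly: by monotonicity $\Theta^{n-4}(\mu_\infty,x)$ exists at every $x$, and on $\Sigma$ it lies in $[\epsilon_0^2, C]$; the smooth part $|F_{A_\infty}|^2\,dV$ contributes zero $(n-4)$-density at $\mathcal H^{n-4}$-a.e.\ $x\in\Sigma$ by Lemma \ref{Lem4.2}(4), so $\Theta^{n-4}(\nu,x)$ exists and is positive and finite $\nu$-a.e., and Preiss applies to $\nu$. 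Your observation that tangent connections are honest Yang-Mills (since $\Omega_x$ is constant, hence closed) is correct but not needed for rectifiability; the paper's $\Omega$-YM monotonicity suffices.
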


Denote $\mathcal A_{\Omega, c}$ to be the space of smooth $\Omega$-YM
connections $A$ on a fixed bundle $E$ with $\|F_{A}\|\leq c$. Now we
consider the space $\overline{\mathcal A_{\Omega, c}}$ by adding limits
$(A_\infty, \mu_\infty)$ of smooth $\Omega$-YM connections $\{A_i\}$ with
$\|F_{A_i}\|_{L^2(M)} \leq c$ (see Section \ref{ModuliCompactness} for more
details.) \emph{Since the space of Radon measures $\{\mu_\infty\}$, which
come from the limits of smooth ones, is compact, we get a natural control
of the singularities of $A_i$.} In particular, the diagonal sequence argument gives the following (see Section \ref{ModuliCompactness} for details)

\begin{thm}\label{Theorem:weak compactness}
$\overline{\mathcal A_{\Omega, c}}$ is weakly sequentially compact in the sense that every sequence $\{(A_i, \mu_i)\}$ in $\overline{\mathcal A_{\Omega, c}}$ sub-converges to some $(A_\infty, \mu_\infty)\in \overline{\mathcal A_{\Omega, c}}$.
\end{thm}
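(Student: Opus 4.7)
My plan is a standard diagonal argument against Theorem \ref{SequentialCompactness}. Given $\{(A_i,\mu_i)\}\subset \overline{\mathcal A_{\Omega, c}}$, by construction each $(A_i,\mu_i)$ is the sub-convergent limit of a sequence $\{A_i^k\}_k$ of smooth $\Omega$-YM connections on $E$ with $\|F_{A_i^k}\|_{L^2}\le c$. If I can extract a diagonal $\{A_i^{k(i)}\}_i$ that sub-converges to some pair $(A_\infty,\mu_\infty)$, then $(A_\infty,\mu_\infty)\in\overline{\mathcal A_{\Omega,c}}$ by the very definition of this set, and I only need to check that the original $(A_i,\mu_i)$ also sub-converges to $(A_\infty,\mu_\infty)$ along the same subsequence.

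To make the extraction rigorous, I first exhibit a countable neighborhood base for the sub-convergence topology at a typical point $(A,\mu)$. Fix an exhaustion $K_1\subset K_2\subset\cdots$ of $M$ by compact subsets and a countable $L^\infty$-dense family $\{\varphi_\ell\}\subset C_c(M)$ used to test weak-$*$ convergence of Radon measures. By Lemma \ref{Lem4.2}, the bubbling set $\Sigma(\mu)$ is intrinsic to $\mu$, so neighborhoods of $(A,\mu)$ may be specified by finitely many of the conditions
\begin{equation*}
\Bigl|\int \varphi_\ell\,d\mu'-\int\varphi_\ell\,d\mu\Bigr|<\delta,\qquad
\inf_{\Phi}\|\Phi^*A'-A\|_{C^m(K_j)}<\delta,
\end{equation*}
where $K_j\subset M\setminus\Sigma(\mu)$ and the infimum is taken over gauge transformations identifying the bundles over $K_j$. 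For each $i$, using $(A_i^k,\mu_i^k)\to(A_i,\mu_i)$ as $k\to\infty$, I pick $k(i)$ so large that the above inequalities all hold with $\mu'=\mu_i^{k(i)}$, $A'=A_i^{k(i)}$, $\delta=1/i$, $\ell,j,m\le i$, and $K_j\subset M\setminus\Sigma(\mu_i)$. The diagonal sequence $\{A_i^{k(i)}\}_i$ then consists of smooth $\Omega$-YM connections on $E$ with uniform $L^2$-curvature bound $c$, so Theorem \ref{SequentialCompactness} produces a subsequence sub-converging to some $(A_\infty,\mu_\infty)$, which lies in $\overline{\mathcal A_{\Omega,c}}$ by definition.

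It remains to verify that $(A_i,\mu_i)$ sub-converges to $(A_\infty,\mu_\infty)$ along the same indices. For the measure part this is immediate from $\mu_i^{k(i)}\rightharpoonup\mu_\infty$ together with the estimate $|\int\varphi_\ell\,(d\mu_i-d\mu_i^{k(i)})|<1/i$. For the connection part, given a compact $K\subset M\setminus\Sigma(\mu_\infty)$, upper semi-continuity of the bubbling set under weak-$*$ convergence of measures (a consequence of the $\epsilon$-regularity and monotonicity of Section \ref{MonotonicityRegularity}) forces $K\subset M\setminus\Sigma(\mu_i)$ for all sufficiently large $i$, so the closeness of $A_i^{k(i)}$ to $A_i$ on $K$ up to gauge combines with the smooth $C^\infty_{\mathrm{loc}}$-gauge convergence $A_i^{k(i)}\to A_\infty$ on $K$ to yield $A_i\to A_\infty$ on $K$ up to gauge. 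The principal bookkeeping difficulty, and the only real obstacle, is the consistent composition of the gauge identifications used in the two nested limits—those identifying each limiting bundle for $(A_i,\mu_i)$ with $E$ over appropriate compact sets, and those identifying the final limiting bundle $E_\infty$ with $E$. Provided these are chosen coherently (as can be arranged by a standard patching argument exhausting $M\setminus\Sigma(\mu_\infty)$), the proof is complete.
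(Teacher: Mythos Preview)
Your approach is essentially the paper's: extract a diagonal of smooth approximants, pass to a limit via Theorem~\ref{SequentialCompactness}, and then use that the bubbling sets $\Sigma(\mu_i)$ eventually avoid any compact $K\subset M\setminus\Sigma(\mu_\infty)$ (the paper states and proves this as $\limsup_i\pi(\mu_i)\subset\pi(\mu_\infty)$, arguing exactly as you do from monotonicity plus $\epsilon$-regularity).

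There is, however, a genuine bookkeeping slip in your diagonal extraction. You fix a \emph{nested} exhaustion $K_1\subset K_2\subset\cdots$ of all of $M$, and then ask that $A_i^{k(i)}$ be close to $A_i$ on those $K_j$ with $j\le i$ and $K_j\subset M\setminus\Sigma(\mu_i)$. But if $\Sigma(\mu_i)\neq\emptyset$, every sufficiently large $K_j$ meets $\Sigma(\mu_i)$, so your closeness is only recorded on a few small $K_j$'s that need not cover a given compact $K\subset M\setminus\Sigma(\mu_\infty)$. Consequently the sentence ``the closeness of $A_i^{k(i)}$ to $A_i$ on $K$'' in your last paragraph is not justified. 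The fix is easy: replace the nested exhaustion by a countable \emph{basis} of compact sets (e.g.\ all closed balls with rational centers and radii); then any compact $K\subset M\setminus\Sigma(\mu_\infty)$ is covered by finitely many basis sets each contained in $M\setminus\Sigma(\mu_\infty)$, hence in $M\setminus\Sigma(\mu_i)$ for $i$ large, and your argument goes through. Alternatively, you can follow the paper and avoid pre-selecting compacta altogether: first pin down $\mu_\infty$, then for $x\notin\pi(\mu_\infty)$ transfer the density bound $r^{4-n}\mu_i(B_r(x))<\epsilon_0^2$ back to the smooth approximants $A_{ij}$ via $\mu_{ij}\rightharpoonup\mu_i$, and invoke $\epsilon$-regularity to get uniform $C^\infty$ bounds on $A_i$ near $x$ directly.
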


\begin{rmk}
\begin{itemize}
\item Without assuming $A_i$ coming from limits of smooth connections, even in the case of admissible YM connections, we do not know  whether such a limit exists or not due to lack of control of $\text{Sing}(A_i)$. 
\item Again, we emphasize here that  the limiting bundle $E_\infty$ is not known to be isometric to $E|_{M\setminus \Sigma}$ for different subsequences in general. That is why we cannot directly take the quotient of $\mathcal A_{\Omega, c}$ mod gauge here. Due to this, it does not make sense to put a topology on the moduli space at this point. Later in the case of HYM connections over general complex manifolds, the results can be improved. 
\end{itemize}
\end{rmk}

Suppose $A_i$ sub-converges to $(A_\infty,\mu_\infty)$ as above. In Section
\ref{SingularityFormation}, it is straightforward by the argument in
\cite{Tian:00} 
to define a  notion of \emph{bubbling connections} associated to the
sequence. Also the \emph{tangent cones} associated to $(A_\infty,
\mu_\infty)$ are shown to exist. Unlike
\cite{Tian:00} where the tangent cone is defined for stationary admissible
Yang-Mills connections, the tangent cone here is defined for the pair
$(A_\infty, \mu_\infty)$ rather than just for $A_\infty$. This comes from the
fact that a monotonicity formula still holds for the energy density of $\mu_\infty$ which suffices for our use. 

By restricting to the case of $\Omega$-ASD instantons, we can
generalize Tian's results (\cite{Tian:00}) without requiring $\Omega$ be
closed.
\begin{thm}
    $\Omega$ restricts to a volume form of $T_x\Sigma$ at $\mathcal H^{n-4}$ a.e. $x\in \Sigma$.
\end{thm}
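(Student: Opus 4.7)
The plan is to pass to a tangent cone of $(A_\infty,\mu_\infty)$ at a generic point $x\in\Sigma$, establish a product structure there, and then read off the calibration condition directly from the $\Omega_x$-ASD equation.

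By Theorem \ref{thm:rectifiable}, $\Sigma$ is $(n-4)$-rectifiable, so at $\mathcal{H}^{n-4}$-a.e.\ $x\in\Sigma$ an approximate tangent plane $V=T_x\Sigma\subset T_xM\cong\R^n$ exists. Fix such an $x$ and choose a blow-up sequence $\lambda_i\to\infty$. The rescaled connections satisfy the $(\phi_{\lambda_i}^*\Omega)$-YM equation on balls of radius $\lambda_i$, and the pulled-back form converges uniformly on compacts to the constant $(n-4)$-form $\Omega_x$ obtained by freezing $\Omega$ at $x$, while the rescaled metric converges to the flat metric $g_x$ on $T_xM$. A diagonal application of Theorem \ref{SequentialCompactness} extracts, modulo gauge, a limiting pair $(A_\infty^x,\mu_\infty^x)$ on $\R^n$, with $A_\infty^x$ a smooth $\Omega_x$-ASD connection on $\R^n\setminus \Sigma^x$ (the ASD form of the equation is preserved in the limit since $|\Omega_x|\leq 1$ and the minimizer characterization of Lemma \ref{eqn:minimizer} passes to the blow-up).

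The next step is to show that at such $x$ the tangent object splits as a product $V\times\{\text{bubble}\}$. The approximate tangency of $V$ to $\Sigma$ forces $\mu_\infty^x$ to be invariant under translations along $V$; combining this with the monotonicity formula from Section \ref{MonotonicityRegularity} and the uniform $L^2$ curvature bound on transverse $4$-dimensional slices, a standard argument as in \cite{Tian:00} upgrades this to translation invariance of $A_\infty^x$ along $V$ modulo gauge. Writing $\R^n=V\oplus V^\perp$ with $V^\perp\cong\R^4$, this forces $A_\infty^x=\pi^* B$, where $\pi:\R^n\to V^\perp$ is orthogonal projection and $B$ is a nontrivial ASD instanton on $V^\perp$; non-triviality is guaranteed by the definition of $\Sigma$ as the energy-concentration set and the $\epsilon$-regularity theorem. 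Since $B$ is anti-self-dual, a direct computation in the split coordinates yields $*_n F_{A_\infty^x}=-F_B\wedge \mathrm{vol}_V$, where $\mathrm{vol}_V$ is the Riemannian volume form on $V$.

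Plugging $A_\infty^x=\pi^*B$ into the $\Omega_x$-ASD equation $*_n F_{A_\infty^x}+F_{A_\infty^x}\wedge\Omega_x=0$ now yields the algebraic identity
\begin{equation*}
    F_B\wedge\bigl(\Omega_x-\mathrm{vol}_V\bigr)=0.
\end{equation*}
Decompose $\Omega_x=I_0+I_1+I_2+\cdots$ according to the number $k$ of indices lying in $V^\perp$. Since $F_B$ has both indices in $V^\perp$ and $\dim V^\perp=4$, the wedge $F_B\wedge I_k$ vanishes for $k\geq 3$, and for $k\in\{0,1,2\}$ the three contributions sit in three distinct bi-degree components of $\Lambda^{n-2}T^*\R^n$, so each must vanish separately. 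Writing $I_0=a\cdot\mathrm{vol}_V$, the $k=0$ component reads $(a-1)\,F_B\wedge\mathrm{vol}_V=0$, which forces $a=1$ because $F_B\not\equiv 0$. This is exactly the claim that $\Omega_x|_V=\mathrm{vol}_V$.

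The main obstacle is the splitting step. Unlike in \cite{Tian:00}, the non-closedness of $\Omega$ introduces error terms involving $d\Omega$ in the various energy identities used to prove translation invariance of $A_\infty^x$ along $V$. Smoothness of $\Omega$ guarantees $d\Omega$ is bounded, so under the blow-up these terms decay like $O(\lambda_i^{-1})$ and drop out in the limit; once this book-keeping is carried out carefully, the remainder of Tian's argument transfers verbatim, after which Steps 3-4 above are essentially algebra.
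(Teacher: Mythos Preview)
Your overall strategy---blow up at a generic point of $\Sigma$, obtain a nontrivial $\Omega_x$-ASD connection on $\R^n$ that is invariant along $V=T_x\Sigma$, and read off the calibration from the ASD equation in product coordinates---is exactly the route the paper takes (following Tian). The splitting step you describe is what the paper records as the bubbling-connection proposition in Section~\ref{SingularityFormation}, and your observation that the $d\Omega$ error terms scale away under blow-up is precisely why the argument carries over without the closedness hypothesis.

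There is, however, a genuine logical gap in your final algebraic step. You assert that the transverse connection $B$ on $V^\perp\cong\R^4$ is anti-self-dual, and then use $*_4 F_B=-F_B$ to compute $*_n F_{A_\infty^x}=-F_B\wedge\mathrm{vol}_V$ and conclude $a=1$. But nothing you have proved up to that point forces $B$ to be ASD; you only know that $A_\infty^x=\pi^*B$ is $\Omega_x$-ASD on $\R^n$. The correct order is the reverse: write $*_n(\pi^*F_B)=(*_4 F_B)\wedge\mathrm{vol}_V$ (no self-duality assumed), plug into $*_nF+F\wedge\Omega_x=0$, and separate bi-degrees. The $V^\perp$-degree-$2$ component then gives $*_4 F_B=-a\,F_B$, so $F_B$ is an eigenvector of $*_4$; since $*_4^2=\id$ and $F_B\not\equiv 0$ you get $a=\pm 1$, which is exactly the statement that $\Omega_x|_V$ is a volume form. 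The (anti-)self-duality of $B$ is a \emph{consequence} of the calibration, not an input.

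A smaller point: your justification that the ASD form of the equation survives the blow-up via ``$|\Omega_x|\leq 1$ and the minimizer characterization of Lemma~\ref{eqn:minimizer}'' is both unnecessary and not available in the paper's setting, where no comass bound on $\Omega$ is assumed. The equation $*F_A+F_A\wedge\Omega=0$ is pointwise and linear in $F_A$, so it passes to smooth limits directly once you know the rescaled $\Omega$ converges to the constant form $\Omega_x$.
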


In Section \ref{RemovableSingularities},  using the argument in
\cite{SmithUhlenbeck:18},
we generalize the removable singularities theorem for Yang-Mills connections
of Tao-Tian \cite{TaoTian:04} to the case of $\Omega$-YM connections.
\begin{thm}
The removable singularities
    theorem holds for $\Omega$-YM connections on a trivial bundle with small energy concentration away from a closed Hausdorff codimension $4$ set. 
\end{thm}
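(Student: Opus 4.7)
The plan is to adapt the Smith--Uhlenbeck strategy from \cite{SmithUhlenbeck:18}, which itself sharpens the original argument of Tao--Tian \cite{TaoTian:04}, to the $\Omega$-YM setting. Fix a point $p\in \Sigma$ and work on a small ball $B=B_{r}(p)\subset M$ on which $E$ is trivialized as $E=B\times\R^{k}$. By hypothesis the curvature satisfies $\int_{B}|F_{A}|^{2}\,\dVol<\epsilon_{0}^{2}$, where $\epsilon_{0}$ is the threshold from the $\epsilon$-regularity theorem proved in Section \ref{MonotonicityRegularity}. Combining the monotonicity formula with $\epsilon$-regularity, applied at scales comparable to $\dist(\cdot,\Sigma)$, yields the Morrey-type pointwise bound
\begin{equation*}
|F_{A}(x)|\leq C\,\dist(x,\Sigma)^{-2}\qquad\text{on }B_{r/2}\setminus\Sigma.
\end{equation*}

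The next stage is to build a good global gauge on $B'=B_{r/2}\setminus\Sigma$. First I would cover $B'$ by a Whitney-type family of balls whose radii are a fixed small fraction of their distance to $\Sigma$; on each such ball the $L^{2}$-energy of $F_{A}$ is much less than $\epsilon_{0}^{2}$, so Uhlenbeck's local gauge theorem produces a Coulomb slice with good Sobolev bounds. I would then patch these into a single Coulomb gauge on $B'$ via the iterative annular argument of \cite{SmithUhlenbeck:18}: one works outward from $\Sigma$ on dyadic annuli, using the Morrey bound on $F_{A}$ to control the transition functions at each stage, producing $d^{\ast}A=0$ on $B'$ together with a weak-$L^{n}$ estimate $\|A\|_{L^{n,\infty}(B')}+\|\nabla A\|_{L^{n/2,\infty}(B')}\leq C\|F_{A}\|_{L^{2}(B)}$.

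In this Coulomb gauge the $\Omega$-YM equation $d_{A}^{\ast}(F_{A}+\ast(F_{A}\wedge\Omega))=0$ combined with $d^{\ast}A=0$ becomes an elliptic system for $A$ of the schematic form
\begin{equation*}
\Delta A=A\cdot\nabla A+A\cdot A\cdot A+\Omega\cdot\nabla F_{A}+d\Omega\cdot F_{A}+d\Omega\cdot A\cdot F_{A}.
\end{equation*}
Since $\Sigma$ has finite $(n-4)$-dimensional Hausdorff measure, its $W^{1,2}$-capacity vanishes, so the a priori estimates just obtained force $A$ to extend across $\Sigma$ as a weak solution of the system on all of $B_{r/2}$. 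Standard elliptic bootstrapping, followed by a Coulomb-gauge change to absorb the remaining transition, then upgrades $A$ to a smooth $\Omega$-YM connection on the full ball, completing the removal of $\Sigma$.

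The hardest step is the annular patching of the local Coulomb gauges into a single one on $B'$: because the topology of $B'$ can be complicated and because $A$ is only controlled in weak-$L^{n}$ near $\Sigma$, the naive Uhlenbeck patching fails and one must run the Smith--Uhlenbeck dyadic iteration. The extra complication compared to the Yang--Mills case is that, since $\Omega$ need not be closed, the equation in Coulomb gauge picks up the additional semilinear terms involving $\Omega$ and $d\Omega$ displayed above. These are genuinely lower order with smooth bounded coefficients, and by construction they do not disturb the monotonicity-based Morrey bound (whose $\Omega$-dependent correction is already built into the formula proved in the earlier section) or the final elliptic regularity step. Verifying that the Smith--Uhlenbeck iteration still closes in the presence of these extra terms is the main technical check; once this is established, the remainder of the argument transfers essentially verbatim from \cite{SmithUhlenbeck:18}.
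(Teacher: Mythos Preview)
Your proposal is a Tao--Tian-style argument (Morrey bound from monotonicity plus $\epsilon$-regularity, then annular patching of local Coulomb gauges on $B\setminus\Sigma$, then weak extension across $\Sigma$ and bootstrap), and the extra $d\Omega$ contributions are indeed lower order---in fact by the Bianchi identity $d_AF_A=0$ your schematic term $\Omega\cdot\nabla F_A$ drops out, leaving only $*(F_A\wedge d\Omega)$. This route could in principle be made to work, but you have misidentified what \cite{SmithUhlenbeck:18} contributes: its point is precisely to \emph{bypass} the delicate dyadic gauge patching of \cite{TaoTian:04}, not to refine it.

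The paper follows that bypass. Setting $f=|F_A|$, the Weitzenb\"ock formula \eqref{eqn:weitzenbock} (which already carries the $d\Omega$ term) together with the improved Kato inequality \cite[Thm.~5]{SmithUhlenbeck:18},
\[
|\nabla_A F_A|^{2}+|d_AF_A|^{2}+|d_A^{*}F_A|^{2}\ \geq\ \tfrac{n}{n-1}\,\bigl|\,d|F_A|\,\bigr|^{2},
\]
yields the purely scalar differential inequality
\[
-\Delta f+\alpha\,\frac{|df|^{2}}{f}-c\,f^{3}\ \leq\ Cf \qquad\text{on }M\setminus\Sigma
\]
for some $\alpha>0$. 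This is exactly the hypothesis of \cite[Thm.~9]{SmithUhlenbeck:18}, which gives $f\in L^{\infty}$ with no gauge theory whatsoever; once the curvature is bounded, the smooth extending gauge is a direct citation of \cite[App.~C, Thm.~19]{SmithUhlenbeck:18}. The only $\Omega$-specific work in the paper is the short Bochner computation needed to absorb the $d\Omega$ term and verify that $\alpha>0$ survives. Your approach instead puts all the weight on the gauge patching---the hardest part of \cite{TaoTian:04}---which is far from ``essentially verbatim'' and would require reproving most of that paper.
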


In the last section, we restrict our discussion to the case of HYM connections over general complex manifolds. If we assume $(A_\infty, \mu)$ is the limit of a sequence of
Hermitian-Yang-Mills connections over a compact Hermitian manifold, then by using
the argument in \cite{Tian:00} for Hermitian-Yang-Mills connections over K\"ahler manifolds
 and the extension theorem in \cite{BandoSiu:94}, 
we can show that
$(A_\infty, \mu)$ are all 
\emph{holomorphic} and $\Sigma$ is a complex subvariety of codimension at least $2$. 
In particular, 
we can now take the quotient of $\overline{\mathcal{A}_{\Omega, c}}$ mod
gauge to get $\overline{M_{HYM,c}}$. There exists a way to give it a
topology that coincides with the four dimensional case (see \cite{DonaldsonKronheimer:90}) so that
\begin{thm}
$\overline{M_{HYM,c}}$ is a first countable sequentially compact Hausdorff space.
\end{thm}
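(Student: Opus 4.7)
The plan is to equip $\overline{M_{HYM,c}}$ with the natural Uhlenbeck-type topology modeled on the four-dimensional picture of \cite{DonaldsonKronheimer:90}. A basic open neighborhood of $[A_\infty,\mu_\infty]$ consists of classes $[A,\mu]$ admitting gauge representatives with $\|A - A_\infty\|_{C^k(K)} < \varepsilon$ on a compact $K \Subset M\setminus \Sigma_\infty$, and with $\bigl|\int_M \varphi_j\,d\mu - \int_M \varphi_j\,d\mu_\infty\bigr| < \varepsilon$ for finitely many $\varphi_j \in C_c(M)$. This is well-posed because, in the HYM setting, Corollary \ref{LimitingBundleOfHE} guarantees that the limit bundle extends globally on $M$ and is globally isometric to $E$, so gauge equivalence on $\overline{\mathcal A_{\Omega, c}}$ is meaningful. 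First countability at each point is then immediate: fix a countable exhaustion $K_1 \subset K_2 \subset \cdots$ of $M\setminus \Sigma_\infty$, a countable $C^0$-dense family $\{\varphi_\ell\} \subset C_c(M)$, and let $k \in \mathbb{N}$, $\varepsilon = 1/m$ range over positive integers.

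Sequential compactness is the descent of Theorem \ref{Theorem:weak compactness} to the gauge quotient: given $\{[A_i,\mu_i]\} \subset \overline{M_{HYM,c}}$, lift to representatives, apply Theorem \ref{Theorem:weak compactness} to extract a subsequence with $(A_{j_i},\mu_{j_i})$ sub-converging to $(A_\infty,\mu_\infty) \in \overline{\mathcal A_{\Omega, c}}$ modulo local gauges on compact subsets of $M\setminus\Sigma$, and invoke Corollary \ref{LimitingBundleOfHE} to assemble these local gauges into a globally defined class $[A_\infty,\mu_\infty] \in \overline{M_{HYM,c}}$. By the definition of the topology above this is a convergent subsequence.

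The Hausdorff property is the main obstacle. Suppose a sequence $\{[A_i,\mu_i]\}$ converges to both $[A_\infty,\mu_\infty]$ and $[A'_\infty,\mu'_\infty]$. Uniqueness of weak Radon limits gives $\mu_\infty = \mu'_\infty$, and by Lemma \ref{Lem4.2} the bubbling sets agree, $\Sigma = \Sigma'$. On $M\setminus\Sigma$, extract (along a subsequence) gauge transformations $g_i, g'_i$ with $g_i^\ast A_i \to A_\infty$ and $(g'_i)^\ast A_i \to A'_\infty$ in $C^\infty_{loc}$. The compositions $h_i := g_i (g'_i)^{-1}$ are pointwise unitary and obey
\[
h_i^{-1} d h_i \;=\; g_i^\ast A_i \;-\; h_i^{-1}\bigl((g'_i)^\ast A_i\bigr) h_i,
\]
whose right-hand side is $C^\infty_{loc}$-bounded on $M\setminus\Sigma$. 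Elliptic bootstrap together with the uniform unitary pointwise bound extracts a smooth subsequential limit $h_\infty$ on $M\setminus\Sigma$ satisfying $h_\infty^\ast A'_\infty = A_\infty$. Since both $A_\infty$ and $A'_\infty$ extend as admissible HYM connections on a reflexive holomorphic sheaf (as established in Section \ref{ModuliCompactness} using \cite{BandoSiu:94}) and $\Sigma$ has complex codimension at least $2$, a Hartogs-type extension promotes $h_\infty$ to a global gauge equivalence, whence $[A_\infty,\mu_\infty] = [A'_\infty,\mu'_\infty]$. The delicate step is the uniform control of $h_i$: one must confirm that the unitary pointwise bound combined with the $C^\infty_{loc}$ bound on the right-hand side above suffices for compactness of $\{h_i\}$ in $C^\infty_{loc}(M\setminus\Sigma)$, which is where the argument effectively uses the smoothness of $A_\infty, A'_\infty$ away from $\Sigma$.
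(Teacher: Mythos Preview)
The paper does not supply an explicit proof of this theorem; it is simply stated as an ``improved version of Theorem \ref{Sequential Compactness}'' once the topology on $\overline{M_{HYM,c}}$ has been defined and Proposition \ref{extension} and Corollary \ref{LimitingBundleOfHE} are in hand. Your proposal supplies exactly the details one would expect, and the overall argument is correct: first countability is elementary once the basis is fixed; sequential compactness is the descent of Theorem \ref{Sequential Compactness} to the gauge quotient, which is legitimate precisely because of Corollary \ref{LimitingBundleOfHE}; and the Hausdorff property is reduced (via first countability) to uniqueness of sequential limits, which you handle by the standard compactness argument for the intertwining gauges $h_i$.

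Two small points. First, the Bando--Siu extension and the fact that $\Sigma$ is a complex subvariety of codimension at least two are established in Proposition \ref{extension}, not in Section \ref{ModuliCompactness}; the latter is the general $\Omega$-YM section where no holomorphic structure is available. Second, the Hartogs step deserves one extra sentence: the limiting gauge $h_\infty$ intertwines two Hermitian--Yang--Mills (hence Chern) connections, so it commutes with both $\bar\partial$-operators and is therefore a \emph{holomorphic} bundle map on $M\setminus\Sigma$; it is this holomorphicity, together with unitarity (boundedness) and $\mathrm{codim}_{\mathbb C}\,\Sigma\geq 2$, that allows the extension across $\Sigma$ to a global isomorphism of the reflexive sheaves. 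Without noting holomorphicity, invoking Hartogs for a merely smooth unitary $h_\infty$ would be unjustified. With these clarifications your argument is complete and matches what the paper leaves implicit.
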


Assume now $(X,\omega)$ is balanced of Hodge-Riemann type (see Section \ref{Section:Hodge-Riemann} for definitions). It turns out there exists a natural $L^2$ bound for the HYM connections in this case. By choosing $c$ large for $\overline{M_{HYM,c}}$, we get the analytic compactification of smooth HYM connections on a fixed unitary bundle, which we denote it as $\overline{M}_{HYM}$. 
\begin{thm}
Over a compact balanced Hermitian manifold of Hodge-Riemann type, $\overline{M}_{HYM}$ is a first countable sequentially compact Hausdorff space.
\end{thm}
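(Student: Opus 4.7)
The claim is essentially a corollary of the previous theorem on $\overline{M_{HYM,c}}$, applied for any $c$ large enough to capture every smooth HYM connection on the fixed unitary bundle $E\to X$. The real content is therefore to produce an \emph{a priori} uniform bound $\|F_A\|_{L^2(X)}\le c_0$ for all smooth HYM connections $A$ on $E$, where $c_0$ depends only on $(X,\omega)$ and the topology of $E$. This is where the hypothesis that $\omega$ is balanced of Hodge-Riemann type must be used in an essential way.

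My starting point would be the identity already noted in the introduction: for an integrable metric connection on the Hermitian $m$-fold $(X,\omega)$ with $\Omega=\omega^{m-2}/(m-2)!$,
\begin{equation*}
\YM_\Omega(A)\;=\;\int_X |F_A|^2\,dV - \int_X \tr(F_A\wedge F_A)\wedge\frac{\omega^{m-2}}{(m-2)!}\;=\;\int_X |\Lambda_\omega F_A|^2\,dV.
\end{equation*}
For a HYM connection the right hand side vanishes, so
\begin{equation*}
\|F_A\|_{L^2(X)}^2 \;=\; \int_X \tr(F_A\wedge F_A)\wedge\frac{\omega^{m-2}}{(m-2)!}.
\end{equation*}
The form $\tr(F_A\wedge F_A)$ is a smooth $d$-closed $(2,2)$-form (by the Bianchi identity together with $d\circ\tr=0$) and represents, up to a universal constant, the fixed topological class $c_1(E)^2-2c_2(E)\in H^4(X;\mathbb{R})$.

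The main obstacle is that on a general balanced manifold $\omega^{m-2}$ is \emph{not} $d$-closed, so the pairing above is not \emph{a priori} cohomological: two $d$-cohomologous representatives of $\tr(F_A\wedge F_A)$ may produce different integrals against $\omega^{m-2}$. This is precisely what the Hodge-Riemann type hypothesis on $(X,\omega)$ developed in Section \ref{Section:Hodge-Riemann} is designed to control. I would invoke it in the form of a Hodge-Riemann style estimate bounding the pairing of any smooth $d$-closed $(2,2)$-form against $\omega^{m-2}$ by its de Rham cohomology class, which yields a constant $C=C(X,\omega,E)$ such that
\begin{equation*}
\int_X \tr(F_A\wedge F_A)\wedge\frac{\omega^{m-2}}{(m-2)!}\;\le\;C
\end{equation*}
for every HYM connection $A$ on $E$. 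Together with the preceding identity, this is the desired uniform $L^2$ bound, with $c_0^2:=C$.

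Once the bound $\|F_A\|_{L^2}\le c_0$ is in place, every smooth HYM connection on $E$ lies in $\mathcal{A}_{\Omega,c_0}$, and any Uhlenbeck-type limit $(A_\infty,\mu_\infty)$ inherits $\mu_\infty(X)\le c_0^2$ by lower semicontinuity of mass under $\mu_i\rightharpoonup\mu_\infty$. Hence $\overline{M}_{HYM}=\overline{M_{HYM,c_0}}$ as sets, and the sequential topology defined in the previous subsection restricts correctly, so the two spaces agree as topological spaces. The conclusion --- first countable, sequentially compact, Hausdorff --- then follows at once from the previous theorem. The only nontrivial step above is the Hodge-Riemann estimate; everything else is bookkeeping.
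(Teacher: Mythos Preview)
Your overall strategy is correct: the theorem reduces to the result on $\overline{M_{HYM,c}}$ once you have a uniform $L^2$ curvature bound, and that bound is the only nontrivial content. But the way you propose to obtain the bound has a genuine gap.

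You arrive at the identity
\[
\|F_A\|^2_{L^2}\;=\;\int_X \tr(F_A\wedge F_A)\wedge \frac{\omega^{m-2}}{(m-2)!}
\]
(valid when the Hermitian--Einstein constant vanishes; otherwise there is an additional fixed constant), and you correctly flag that $\omega^{m-2}$ is not closed, so this pairing is not cohomological. Your proposed fix---``a Hodge--Riemann style estimate bounding the pairing of any $d$-closed $(2,2)$-form against $\omega^{m-2}$ by its de Rham cohomology class''---is not something the Hodge--Riemann type hypothesis actually provides, and in fact no such cohomology-only bound can hold when $d\omega^{m-2}\neq 0$: writing $\tr(F_A\wedge F_A)=\alpha_0+d\beta$ for a fixed representative $\alpha_0$, the error term $-\int_X \beta\wedge d\omega^{m-2}$ has no a priori control from the de Rham class alone.

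The paper's argument avoids $\omega^{m-2}$ entirely. The point of the Hodge--Riemann type definition is that $\omega^{m-1}=\omega_0\wedge\Omega_0$ with $\Omega_0$ a \emph{closed} $(m-2,m-2)$-form, and conditions (3)--(4) of that definition give a \emph{pointwise} Hodge--Riemann inequality on $(1,1)$-forms. Decomposing $F_A = f\,\omega_0\otimes\Id + F_A^\perp$ with $F_A^\perp\wedge\omega_0\wedge\Omega_0=0$, the positive-definiteness of $Q$ on the primitive piece yields
\[
\int_X |F_A|^2\,\frac{\omega^m}{m!}\ \le\ C_1\Bigl(\int_X\Tr(F_A\wedge F_A)\wedge\Omega_0 \;+\; C_2\int_X |f|^2\,\omega_0\wedge\omega_0\wedge\Omega_0\Bigr).
\]
Now the first term on the right is topological because $d\Omega_0=0$, and the second is controlled since $f$ is determined by the constant $\mu$. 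This is the lemma immediately preceding the theorem; once it is in hand, your final paragraph goes through verbatim. The moral is that the closed form to pair against is $\Omega_0$, not $\omega^{m-2}$, and the relevant estimate is pointwise via the bilinear relations rather than a global cohomological trick.
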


\begin{rmk}
Here the Hodge-Riemann type condition on the metrics can give us a
    uniform bound on the curvature of all the $\Omega$-YM connections
    considered. We also refer the interested readers to \cite[Section 3.1
    (Property $B'$)]{DonaldsonSegal:11} where a notion of \emph{taming
    forms} has been introduced for almost $\text{Spin}(7)$ manifold to
    achieve the $L^2$ bound of the curvature as well as a discussion
    reduced to dimension $6$ (see \cite[eqn.\ (28)]{DonaldsonSegal:11}). 
\end{rmk}

By the main results in \cite{Timorin:98}, this gives the following 
\begin{cor}
Over a complex Hermitian manifold $(X, \omega)$ so that $\omega^{m-1}=\omega_0\wedge\cdots \omega_{m-2}$ where $\omega_i$ are positive $(1,1)$ forms with $d\omega^{m-1}=0$ and $d(\omega_1\wedge \cdots \omega_{m-2})=0$, $\overline{M}_{HYM}$ is a first countable sequentially compact Hausdorff space.
\end{cor}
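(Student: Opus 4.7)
The plan is to deduce this corollary directly from the preceding theorem by verifying its hypotheses, namely that $(X,\omega)$ is a compact balanced Hermitian manifold of Hodge-Riemann type. The balanced condition $d\omega^{m-1}=0$ is given explicitly as part of the assumption, so the substantive step lies in establishing the Hodge-Riemann type property from the factorization $\omega^{m-1}=\omega_0\wedge\cdots\wedge\omega_{m-2}$ into positive $(1,1)$-forms together with the closedness assumption $d(\omega_1\wedge\cdots\wedge\omega_{m-2})=0$.

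The key input is Timorin's mixed Hard Lefschetz and Hodge-Riemann theorem from \cite{Timorin:98}: on a complex vector space of dimension $m$, the linear operator of wedging a $(1,1)$-form with a product of $m-2$ strictly positive $(1,1)$-forms satisfies the mixed Hard Lefschetz property, and the induced quadratic form on the corresponding primitive subspace satisfies the Hodge-Riemann positivity relations. I would apply this fact pointwise on $X$ with the factorization provided in the hypothesis. Combined with the closedness of $\omega_1\wedge\cdots\wedge\omega_{m-2}$, this pointwise positivity upgrades, via the Chern-Weil-type identity used in Section \ref{Section:Hodge-Riemann}, to the uniform $L^2$ bound on $F_A$ for smooth HYM connections $A$ on the fixed unitary bundle; this is exactly the content of the Hodge-Riemann type condition as formulated there.

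With both hypotheses verified, the preceding theorem applies verbatim and yields that $\overline{M}_{HYM}$ is a first countable sequentially compact Hausdorff space. The main obstacle I anticipate is bookkeeping rather than conceptual: one must match the precise form of Hodge-Riemann positivity appearing in the definition of Section \ref{Section:Hodge-Riemann} with the statement of Timorin's theorem, and verify that the two closedness conditions in the hypothesis are exactly sufficient (rather than requiring each $\omega_i$ individually closed, as in the Kähler multipolarization case of \eqref{eqn:multipolarization}). This reduces to a careful integration by parts in the Chern-Weil identity for $\int_X \tr(F_A\wedge F_A)\wedge \omega_1\wedge\cdots\wedge\omega_{m-2}$, where $d\omega^{m-1}=0$ controls the leading terms and $d(\omega_1\wedge\cdots\wedge\omega_{m-2})=0$ kills the remaining boundary contributions.
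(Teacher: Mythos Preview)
Your approach is correct and matches the paper's: the corollary follows from the preceding theorem once one checks that $(X,\omega)$ is balanced of Hodge-Riemann type, which is exactly Timorin's mixed Hodge--Riemann theorem \cite{Timorin:98} applied pointwise with $\Omega_0=\omega_1\wedge\cdots\wedge\omega_{m-2}$. The two closedness hypotheses are precisely conditions (1) and (2) in the definition of Hodge--Riemann type, so the integration-by-parts concern you anticipate is already absorbed into the lemma of Section~\ref{Section:Hodge-Riemann} and requires no separate argument.
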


\begin{rmk}
We emphasize here that by \cite{Timorin:98}, $\omega_0\wedge\cdots \omega_{m-2}$ is always  
strictly positive and thus defines a positive $(1,1)$ form on $X$ through $\omega^{m-1}=\omega_0\wedge\cdots \omega_{m-2}$. 
\end{rmk}

In particular, we have
\begin{cor}
Assume $(X,\omega)$ is a compact K\"ahler manifold, $\overline{M_{HYM}}$ is a first countable sequentially compact Hausdorff space.
\end{cor}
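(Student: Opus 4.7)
The plan is to deduce this from the preceding corollary by taking all of the positive $(1,1)$-forms to coincide with the K\"ahler form $\omega$ itself. Since $(X,\omega)$ is K\"ahler, $d\omega=0$, so in particular $d\omega^{m-1}=(m-1)\omega^{m-2}\wedge d\omega=0$, which means $(X,\omega)$ is balanced. Furthermore, setting $\omega_1=\cdots=\omega_{m-2}=\omega$ gives the factorization $\omega^{m-1}=\omega\wedge\omega_1\wedge\cdots\wedge\omega_{m-2}$ with each factor a positive $(1,1)$-form, and $d(\omega_1\wedge\cdots\wedge\omega_{m-2})=d(\omega^{m-2})=0$ again by the K\"ahler condition.

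Alternatively, and perhaps more directly, I would invoke the theorem for balanced Hermitian manifolds of Hodge-Riemann type by verifying the Hodge-Riemann type condition in the K\"ahler case. This is the content of the classical Hodge-Riemann bilinear relations on a compact K\"ahler manifold: the primitive Lefschetz decomposition with respect to $\omega$ is orthogonal, and on primitive $(1,1)$-classes the natural quadratic form has the required signature. In either formulation, the hypothesis of the preceding theorem (respectively preceding corollary) is satisfied, and the conclusion that $\overline{M_{HYM}}$ is a first countable sequentially compact Hausdorff space follows immediately.

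There is no real obstacle here; the work has been done in establishing the earlier theorem for balanced manifolds of Hodge-Riemann type, and the K\"ahler case is the most classical instance of that setup. The only thing worth emphasizing in the write-up is that the uniform $L^2$ bound on curvatures used to apply Theorem~\ref{Theorem:weak compactness} is automatic in the K\"ahler case from the topological identity
\begin{equation*}
    \int_X |F_A|^2\, dV = \int_X \tr(F_A\wedge F_A)\wedge \tfrac{\omega^{m-2}}{(m-2)!} + 2\int_X |\Lambda F_A|^2\, dV,
\end{equation*}
so that for HYM connections (with $\Lambda F_A=0$, or more generally with $\Lambda F_A$ determined by the topology via the Einstein constant) the $L^2$-norm of $F_A$ is controlled by the characteristic classes of $E$ and the K\"ahler class $[\omega]$. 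Hence $\overline{M_{HYM,c}}$ stabilizes for $c$ sufficiently large and equals $\overline{M_{HYM}}$, and the corollary follows.
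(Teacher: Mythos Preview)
Your proposal is correct and matches the paper's approach: the paper states this corollary with no proof beyond the phrase ``In particular, this gives the following,'' meaning it is deduced immediately from the preceding multipolarization corollary by taking all the $\omega_i$ equal to $\omega$, exactly as you do. Your added remarks (the alternative via the classical Hodge--Riemann bilinear relations, and the explicit topological identity giving the uniform $L^2$ curvature bound) are accurate and more detailed than what the paper itself provides.
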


\begin{rmk}
\begin{itemize}
\item As mentioned in Theorem \ref{Theorem:weak compactness} above, the
    novelty here is that we do not need to consider a larger space as
        \cite{Tian:00} (explained below). Rather, we  use the crucial condition 
        that the connections considered come from limits of 
        smooth connections. The latter gives a natural control of the 
        singularities of the singular connections on the boundary. 

\item In \cite{Tian:00}, in order to compactify the moduli space, a notion of ideal HYM connection 
    is introduced that generalizes the situation in four dimension 
        (see \cite{DonaldsonKronheimer:90}); namely, those pairs $(A,
        \Sigma)$ with certain natural curvature conditions but not
        necessarily coming from limits of smooth ones. In the case of four
        manifolds, the compactification works essentially due to the good control of the 
        bubbling set,  which consists of points, and Uhlenbeck's removable 
        singularity theorem. In higher dimensions, essential difficulties
        arise  if we insist on such a
        large space of ideal objects.
        One is the lack of  control of $\text{Sing}(A)$. 
        Also, the removable singularity theorem 
does not automatically apply in this situation due to the fact that the
        limiting bundle $E_\infty$, defined only away from the singular
        set, does not necessarily extend to all of $M$ as a
        smooth bundle. 

\item In higher dimensions, and  assuming $(X,\omega)$ is projective,
    it is shown in \cite{GSTW:18} that the space of ideal HYM connections
        modulo gauge is indeed compact. This is essentially due to a boundedness 
        result from the algebraic geometric side which gives control 
        of $\text{Sing}(A)$, and a version of the  removable singularity theorem 
        for HYM connections 
        by Bando and Siu (\cite{BandoSiu:94}). 
        With this, one can take the closure of the space of 
        smooth HYM connections mod gauge in such a space to get a compactification.
 
\item It is an interesting question to find a 
    characterization of the ideal HYM connections added on to  the boundary
        of $\overline{M_{HYM}}$, i.e. determine whether a given ideal HYM connection be approximated by the smooth ones.
\end{itemize}
\end{rmk}
Following from the argument in \cite{GSTW:18}, and using the results on 
compactification of semistable sheaves via multipolarizations in \cite{GrebToma:17}, 
we explain how to give a complex structure to the compactification $\overline{M^*_{HYM}}$, 
where $M^*_{HYM}$
is the moduli space  of smooth irreducible HYM connections with
\emph{fixed determinant}.

Finally, 
 consider   a finite energy HYM connection $A_\infty$ over a complex
Hermitian manifold, and denote by $\E_\infty$ the corresponding
reflexive sheaf. Given the analytic results above 
the following follows directly from the argument in \cite{ChenSun:19}, to which  we refer
the interested reader  for the concepts involved. Here the tangent cone can
be directly defined for $A_\infty$ (not necessarily coming from the limit
of smooth ones).

\begin{thm}
The analytic tangent cone of $A_\infty$ at a point $x$ is uniquely determined by the optimal algebraic tangent cones of $\E_\infty$ at $x$. 
\end{thm}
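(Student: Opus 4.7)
The plan is to verify that every analytic input of the Chen--Sun argument \cite{ChenSun:19} is available in the Hermitian Yang--Mills setting established in the present paper, and then to apply their local comparison between analytic and algebraic blow-ups essentially verbatim. Since the statement is local near $x$, I would work in a small ball and, after replacing $\E_\infty$ by its singularity stratum at $x$, assume that $x$ is an isolated singular point of $\E_\infty$; the Hermitian--Einstein metric on $\E_\infty|_{X \setminus \Sigma}$ induced by the finite energy connection $A_\infty$ extends to a Hermitian--Einstein metric on $\E_\infty$ away from a subvariety of codimension at least three, by the Bando--Siu extension theorem \cite{BandoSiu:94}.

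The heart of \cite{ChenSun:19} is to rescale $A_\infty$ by $\lambda_i \to 0$ around $x$, to extract a limiting HYM cone on the trivial bundle over $T_x X$, and to identify its holomorphic structure with the optimal algebraic tangent cone of $\E_\infty$. In our setting, the monotonicity formula and the $\epsilon$-regularity of Section \ref{MonotonicityRegularity} apply to the HYM equation as the special case $\Omega = \omega^{m-2}/(m-2)!$ of the $\Omega$-YM equation, and the tangent cone construction of Section \ref{SingularityFormation} for the pair $(A_\infty,\mu_\infty)$ produces exactly the analytic tangent cone needed as input. On the algebraic side, the successive weighted blow-ups $\pi_i:\widetilde X_i\to X$ and the Harder--Narasimhan type filtrations used to define the optimal algebraic tangent cone are purely algebro-geometric, and are insensitive to whether $\omega$ is K\"ahler or only Hermitian. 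The matching between the two sides then follows the Chen--Sun strategy: on each blow-up, one pulls back both $A_\infty$ and $\E_\infty$, applies Bando--Siu again to extend the induced Hermitian--Einstein metric across the codimension two locus on $\widetilde X_i$, and compares the polystable graded pieces of the induced filtration with the summands of the analytic cone via uniqueness of HYM metrics on polystable reflexive sheaves over $\P(T_x X)$.

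The main obstacle to copying the Chen--Sun proof word for word is that the base metric is only Hermitian, so the Chern--Weil type identities relating $\int |F_A|^2$ to characteristic numbers pick up torsion and boundary terms whenever one integrates by parts. These contributions are uniformly supported on compact sets away from $x$ and rescale out as $\lambda_i \to 0$, which is precisely the same mechanism that enabled the monotonicity formula and $\epsilon$-regularity of Section \ref{MonotonicityRegularity}; hence the blow-up limits are unaffected. Once this point is verified, every remaining step in \cite{ChenSun:19} is a local statement at the singular point, depends only on the analytic package cited above together with algebro-geometric data intrinsic to $\E_\infty$, and therefore carries over without modification to give the stated uniqueness.
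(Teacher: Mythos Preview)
Your approach matches the paper's, which gives no separate proof but simply notes that once the analytic package of Sections~\ref{MonotonicityRegularity}--\ref{SingularityFormation} and the Bando--Siu extension (Proposition~\ref{extension}) are in place, the argument of \cite{ChenSun:19} carries over verbatim. One minor refinement: the paper emphasizes that in this setting the tangent cone is defined \emph{directly} for the admissible HYM connection $A_\infty$ (not via the pair $(A_\infty,\mu_\infty)$ of Section~\ref{Tangent cone}, which requires $A_\infty$ to arise as a limit of smooth connections); this is available because the monotonicity argument of Section~\ref{MonotonicityRegularity} applies to $A_\infty$ itself through the identity $d_A^\ast F_A=\rho F_A$ noted in the HYM remark at the start of Section~7.
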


\medskip
\noindent
{\bf Acknowledgements}. The authors are grateful for comments 
on  this paper from Daniel Greb, Ben Sibley, Song Sun, Matei Toma, and
Thomas Walpuski.  

\section{Sequential compactness of smooth $\Omega$-Yang-Mills connections}\label{MonotonicityRegularity}
\subsection{Monotonicity}
Following the argument used by Price for Yang-Mills connections 
\cite{Price:83}, we will show that a monotonicity formula holds for $\Omega$-YM
connections. We also refer  to
\cite[Thm.\ 2.1.1]{Tian:00} for a slightly more general version 
of the following for Yang-Mills connections. 
\begin{thm}\label{Monotonicity}
There exist positive  constants $a$ and $r_0$,  depending only on the geometry of
    $(M,g)$ and $\Omega$,  with the following significance. If
    $A$ is a smooth solution to \eqref{eqn:oym} and
     $0<r_1<r_2\leq r_0$, then
$$
\begin{aligned}
\int_{B_{r_2}(x) \setminus B_{r_1}(x)} r^{4-n} e^{ar}|\iota_{\partial_r} F_A|^2\leq e^{ar_2} r_2^{4-n}\int_{B_{r_2}(x)} |F_A|^2 -e^{ar_1} r_1^{4-n} \int_{B_{r_1}(x)}  |F_A|^2.
\end{aligned}
$$
\end{thm}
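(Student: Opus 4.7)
The plan is to adapt Price's original vector field argument \cite{Price:83} to the $\Omega$-YM setting. Work inside a normal coordinate ball $B_{r_0}(x)$ with $r_0$ smaller than the injectivity radius, so the metric is $C^2$-close to the Euclidean one, and use the radial vector field $X = r\,\partial_r$. The heart of the method is the Yang--Mills stress-energy tensor
\[
T_{ij} = \langle \iota_{\partial_i} F_A,\, \iota_{\partial_j} F_A\rangle - \tfrac{1}{4} g_{ij} |F_A|^2,
\]
together with the pointwise identity $\nabla^i(X^j T_{ij}) = (\nabla^i X^j) T_{ij} + X^j \nabla^i T_{ij}$. Integrating over $B_r(x)$ and applying the divergence theorem produces a boundary term involving $T(X,\nu) = r\bigl(|\iota_{\partial_r}F_A|^2 - \tfrac14|F_A|^2\bigr)$, an interior term which in the flat Euclidean case is exactly $\frac{n-4}{4}\int_{B_r}|F_A|^2$ (from $\nabla X = \mathrm{Id}$), and the divergence term $\int_{B_r} X^j\nabla^i T_{ij}$.

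The new input for $\Omega$-YM is the computation of $\nabla^i T_{ij}$. The standard Bochner-type identity gives $\nabla^i T_{ij} = \langle \iota_{\partial_j} F_A,\, d_A^\ast F_A\rangle$, so the task reduces to estimating $d_A^\ast F_A$. Expanding the $\Omega$-YM equation \eqref{eqn:oym} using the relation $d_A^\ast{\ast} = \pm {\ast}\,d_A$ and the Bianchi identity $d_A F_A = 0$, one finds
\[
d_A^\ast F_A \;=\; -\,d_A^\ast {\ast}(F_A\wedge\Omega) \;=\; \pm {\ast}(F_A\wedge d\Omega),
\]
so $|d_A^\ast F_A| \leq C\,|F_A|\,|d\Omega|$ pointwise. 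Consequently $|\nabla^i T_{ij}| \leq C\,|F_A|^2\,|d\Omega|_{L^\infty}$, and the new error contributes a term of order $r^{5-n}\int_{B_r}|F_A|^2$ after integration against $X$. Combining this with the $O(r^2)$ error coming from the non-flatness of $g$ (which makes $\nabla^i X^j - \delta^{ij}$ of that size), one arrives at the differential inequality
\[
\frac{d}{dr}\!\left(r^{4-n}\int_{B_r(x)}|F_A|^2\right) \;\geq\; 2\,r^{4-n}\!\int_{\partial B_r(x)} |\iota_{\partial_r} F_A|^2 \;-\; a\,r^{5-n}\!\int_{B_r(x)}|F_A|^2,
\]
where $a$ depends on the bounded geometry of $(M,g)$ and on $\|\Omega\|_{C^1}$. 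Multiplying by the integrating factor $e^{ar}$ converts this into the monotonicity
\[
\frac{d}{dr}\!\left(e^{ar}\,r^{4-n}\!\int_{B_r(x)} |F_A|^2\right) \;\geq\; 2\,e^{ar}\,r^{4-n}\!\int_{\partial B_r(x)} |\iota_{\partial_r} F_A|^2,
\]
and integrating from $r_1$ to $r_2$ and using $|\iota_{\partial_r} F_A|^2\,dV = |\iota_{\partial_r}F_A|^2\,r^{n-1}\,dr\,d\sigma$ to convert the boundary integrals back into the ball integral on the left of the theorem yields the stated inequality.

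The main obstacle is the careful bookkeeping needed to show that the $\Omega$-dependent correction and the metric-curvature correction are \emph{both} of the right order to be absorbed into a single exponential prefactor $e^{ar}$, rather than producing an additive remainder. This is where the structural observation $d_A^\ast F_A = \pm{\ast}(F_A\wedge d\Omega)$ is essential: it gives a pointwise bound by $|F_A|^2$ rather than by $|F_A|$, so the error has the same homogeneity in $F_A$ as the leading terms and can be handled by Gronwall with respect to $r$. The bounded-geometry hypothesis on $(M,g)$ and smoothness of $\Omega$ ensure $\|\Omega\|_{C^1}$ is finite, making the constant $a$ uniform.
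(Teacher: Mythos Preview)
Your proposal is correct and follows essentially the same strategy as the paper: both arguments are Price's vector-field method applied to the radial field $X=r\partial_r$, and both hinge on the structural identity $d_A^\ast F_A=\pm\ast(F_A\wedge d\Omega)$ derived from the $\Omega$-YM equation and the Bianchi identity, which forces the new error to be pointwise quadratic in $F_A$ and hence absorbable into an exponential factor $e^{ar}$.

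The only difference is packaging. The paper works through Price's original variational computation---pulling back the connection by the flow of the lifted vector field, computing $\frac{d}{dt}\YM(A_t)$ two ways, and then inserting the cutoff $X=\xi(\tau^{-1}r)\,r\partial_r$ and differentiating in the scale $\tau$. You instead compute the divergence of the stress-energy tensor $T$ directly, integrate over $B_r$, and differentiate in $r$. These are well-known equivalent presentations of the same identity; the cutoff version avoids boundary terms at the cost of an extra limit, while your version is somewhat cleaner but requires the coarea formula at the end. One bookkeeping point: your stated error $a\,r^{5-n}\int_{B_r}|F_A|^2$ is off by one power of $r$---after multiplying the raw stress-energy identity by $r^{3-n}$ the error is of order $a\,r^{4-n}\int_{B_r}|F_A|^2=a\,f(r)$, which is exactly what is needed for the integrating factor $e^{ar}$ (your stated exponent would instead call for $e^{ar^2/2}$, though on a bounded interval this of course makes no difference).
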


\begin{rmk}
If we denote the scale invariant $L^p$ norms by:
    \begin{equation} \label{eqn:fp}
f_p(x,r):= \left\{r^{2p-n}\int_{B_r(x)}|F_A|^p dV \right\}^{1/p}
    \end{equation}
    then Theorem \ref{Monotonicity}   implies, in particular, that $e^{ar}f_2(x,r)$ 
is increasing for sufficiently small $r$.
\end{rmk}

\begin{proof}[Proof of Theorem \ref{Monotonicity}]
    Let $\pi: P\to M$ be the orthogonal (or unitary) frame bundle of $E$.
Given any connection $B$ on $E$, denote by  $\widetilde{B}$ the associated 
    connection $1$-form
    on the principal bundle $P$. Given a vector field $X$ on $M$ with compact
    support, we denote by
    $\widetilde{X}$  the unique horizontal lift of $X$ to $P$. 
    Let $\widetilde{\Phi}_t$ (resp.\ $\Phi_t$) be the family of
    diffeomorphisms generated by $\widetilde{X}$ (resp. $X$). 
    As  in \cite{Price:83}, we consider the family  of connection
    $1$-forms
    $\widetilde{A}_t=\widetilde{\Phi}_t^*\omega$,  and we  denote by $A_t$
     the corresponding family of connections on $E$. We have 
$$
\delta\widetilde{A}_t(0)=\iota_{\widetilde{X}}d\widetilde{A}=\pi^*\iota_{X} F_A
$$
since $\widetilde{X}$ is the horizontal lift of $X$. In particular, 
$
\delta A_t(0)=\iota_X F_A
$.
Indeed,  choosing a local section $\sigma$ of $P$,  which gives a
    trivialization of $E$, then  by definition: 
$
A_t=\sigma^*\widetilde{A}_t
$.
This implies 
$$
\delta A_t(0)=\sigma^* \iota_{\widetilde{X}} d\widetilde{A}=\sigma^*\pi^* \iota_X F_A=(\pi\sigma)^*\iota_X F_A=\iota_X F_A
$$
since $\pi \sigma=\Id$. Now we look at the variation of the Yang-Mills
    functional along $A_t$. As for this, there are two ways to calculate it.
    First, since $A$ satisfies \eqref{eqn:oym},  we have 
    \begin{equation} \label{eqn:alt-oym}
        d_A^*F_A\pm *(F_A\wedge d\Omega)=0\ .
    \end{equation}
Then,
$$
\begin{aligned}
\frac{d}{dt} \int_{M} |F_{A_t}|^2\bigr|_{t=0}
&=2\int_{M} \langle d_{A} \delta A_t(0), F_A\rangle
=2\int_{M} \langle\iota_X F_A, d_A^* F_A\rangle\\
&=\mp 2\int_{M} \langle\iota_X F_A, *(F_A\wedge d\Omega)\rangle. 
\end{aligned}
$$
    Alternatively, one may differentiate \eqref{eqn:oym-functional} at
    $t=0$ and use the fact that $A$ is critical for $\YM_\Omega$.
In any case, this implies 
    \begin{equation}\label{eqn:first}
\left|\frac{d}{dt} \int_{M} |F_{A_t}|^2\bigr|_{t=0}\right|\leq 2\sup |d\Omega|\int_M|\iota_X F_A| |F_A|.
\end{equation}
Now the second way to calculate the variation is as in \cite{Price:83}. We include the  details here. By definition, we know 
$$
\int_{M} |F_{A_t}|^2
=\int_{M} |F_{A_t}(d\Phi_t \cdot, d\Phi_t \cdot)|^2(\Phi_t \cdot)\, dV
=\int_M  |F_{A_t}(d\Phi_t(e_i), d\Phi_t(e_j))|^2(x) J_{\phi_t^{-1}}\, dV
$$
where $\{e_i\}$ is a  local orthonormal frame near the point $x$.
    Taking derivatives and evaluating at $t=0$ gives 
$$
\begin{aligned}
\frac{d}{dt}\int_{M} |F_{A_t}|^2|_{t=0}
&=\int_M -|F_A|^2 \text{div} X-4\langle F_{A_t}(L_X e_i, e_j), F_{A}(e_i,
    e_j)\rangle\\
&=\int_M -|F_A|^2 \text{div} X+\sum_{i,j}4\int_M \langle 
    F_A(\nabla_{e_i} X, e_j), F_A(e_i, e_j)\rangle \ .
\end{aligned}
$$
Combined with \eqref{eqn:first}, this implies 
\begin{equation}\label{eqn1.2}
\biggl|\int_M -|F_A|^2 \text{div} X+\sum_{i,j}4\int_M \langle 
    F_A(\nabla_{e_i} X, e_j), F_A(e_i, e_j)\rangle\biggr|
    \leq 2\sup |d\Omega|\int_M|\iota_X F_A| |F_A|.
\end{equation}
Near the point $x$ we fix the normal coordinates and let $\{e_1=\partial_r, e_2, \cdots, e_n\}$ be a normal frame. In particular, $\nabla_{\partial_r} \partial_r=0.$ Choose 
$
X=\xi (r)r\partial_r
$,
where $\xi$ is a compact supported function supported over $[0,1+\epsilon]$ with $\xi=1$ on $[0,1]$ and $\xi'\leq 0$. 
Then 
\begin{itemize}
\item $\nabla_{\partial_r} X=(\xi' r+\xi) \frac{\partial}{\partial r}$
\item for $i\geq 2$, $\nabla_{e_i}X=\xi r \nabla_{e_i} \frac{\partial}{\partial r}=\xi e_i+\xi O(r^2)$
\end{itemize}
which implies 
\begin{equation}\label{eqn1.3}
\begin{aligned}
&\sum_{i,j}4\int_M \langle F_A(\nabla_{e_i} X, e_j), F_A(e_i, e_j)\rangle \\
=&\sum_{j}4\int_M \langle
    F_A(\nabla_{\partial_r } X, e_j), F_A(\partial_r , e_j)\rangle+
    \sum_{i\geq 2}\sum_j 4\int_M \langle F_A(\nabla_{e_i} X, e_j), F_A(e_i,
    e_j)\rangle
\\
=&\int_M 4\xi'r |\iota_{\partial_r} F_A|^2+\sum_{j}4\int_M \xi |F_A(\partial_r, e_j)|^2+ \sum_{i\geq 2}\sum_j  4\int_M \xi |F_A(e_i, e_j)|^2+\int_{M} O(r^2)\xi |F_A|^2
\\
=&\int_M 4\xi'r |\iota_{\partial_r} F_A|^2+4\int_M \xi |F_A|^2+\int_{M} O(r^2)\xi |F_A|^2.
\end{aligned}
\end{equation}
and
$$\text{div} X =\xi' r+ n \xi +\xi O(r^2).$$
Given this, we have 
\begin{align}
   \int_M |F_A|^2 \text{div}(X)-2\sup |d\Omega| \int_{M} |X| |F_A|^2 
        &= \int_{M} |F_A|^2 (\xi' r+ n \xi +O(r^2))
        \label{eqn1.4}
\\
        &\qquad\qquad - 2\sup |d\Omega| \int_{M} |X| |F_A|^2 \notag
\end{align}
Plugging eqns.\  \eqref{eqn1.3} and \eqref{eqn1.4} into \eqref{eqn1.2},  we have 
\begin{align}
    \begin{split}\label{eqn1.5}
    \int_{M} |F_A|^2 (\xi' r+ (n-4) \xi +O(r^2))&-2\sup |d\Omega| \int_{M} \xi r |F_A|^2 \\ \leq &\int_M 4\xi'r |\iota_{\partial_r} F_A|^2+\int_{M} O(r^2)\xi |F_A|^2 
    \end{split}
\end{align}
Now by replacing $\xi_{\tau}$ with $\xi_\tau(r)=\xi(\tau^{-1} r)$ in 
\eqref{eqn1.5},
and using the fact that 
$$
\tau \frac{d  \xi_\tau}{d\tau}=-r\xi_\tau'\ ,
$$
we have 
$$
\begin{aligned}
&\int_{M} |F_A|^2(-\tau \frac{d\xi_\tau}{d\tau}+(n-4)\xi_\tau )-2\sup |d\Omega| \int_{M} \xi_\tau r |F_A|^2\\
\leq &-\int_M 4\tau \frac{d\xi_\tau}{d\tau} |\iota_{\partial_r} F_A|^2+\int_{M} O(r^2)\xi_\tau |F_A|^2
\end{aligned}
$$
i.e.  
$$
\begin{aligned}
&\int_{M} |F_A|^2( \tau \frac{d\xi_\tau}{d\tau}+(4-n)\xi_\tau )+2\sup |d\Omega| \int_{M} \xi_\tau r |F_A|^2\\
\geq &\int_M 4\tau \frac{d\xi_\tau}{d\tau} |\iota_{\partial_r} F_A|^2+\int_{M} O(r^2)\xi_\tau |F_A|^2.
\end{aligned}
$$
Multiply the above by $e^{a\tau}\tau^{3-n}$ where $a$ is a constant to be
determined later, 
and use the fact that $\xi_\tau r |F_A|^2\leq \xi_\tau \tau |F_A|^2$,
since $\xi_\tau$ is supported over $\{|x|\leq \tau\}$. We conclude
$$
\begin{aligned}
&e^{a\tau} \frac{d}{d \tau} (\tau^{4-n}\int_{M} \xi_\tau |F_A|^2)+e^{a\tau} \tau^{4-n} 2\sup |d\Omega| \int_{M}\xi_\tau |F_A|^2 \\
\geq & 4 e^{a\tau}\tau^{4-n} \int_{M}\frac{d\xi_\tau}{d\tau} |\iota_{\partial_r} F_A|^2 + e^{a \tau} \tau^{3-n}\int_{M} O(r^2)\xi_\tau |F_A|^2.
\end{aligned}
$$
which implies 
$$
\begin{aligned}
&\frac{d}{d \tau} (e^{a\tau} \tau^{4-n}\int_{M} \xi_\tau |F_A|^2)\\
\geq & 4 e^{a\tau}\tau^{4-n} \int_{M}\frac{d\xi_\tau}{d\tau} |\iota_{\partial_r} F_A|^2 + e^{a \tau} \tau^{3-n}\int_{M} O(r^2)\xi_\tau |F_A|^2+ae^{a\tau} \tau^{4-n}\int_{M} \xi_\tau |F_A|^2\\
&-e^{a\tau} \tau^{4-n} 2\sup |d\Omega| \int_{M}\xi_\tau |F_A|^2
\end{aligned}.
$$
Now choose $a$ large so that $a\gg 2\max\{1, 2\sup |d\Omega| \}$. Since $\frac{d\xi_\tau}{d\tau}=-\frac{r}{\tau}\xi_\tau'$ is nonnegative,
$$
\frac{d}{d \tau} (e^{a\tau} \tau^{4-n}\int_{M} \xi_\tau |F_A|^2)\geq 4 e^{a\tau}\tau^{4-n} \int_{M}\frac{d\xi_\tau}{d\tau} |\iota_{\partial_r} F_A|^2 \geq 4\int_M e^{ar}r^{4-n} \frac{d\xi_\tau}{d\tau} |\iota_{\partial_r} F_A|^2
$$
if $\tau<r_0$ for some $r_0$ so that $e^{a\tau} \tau^{4-n}$ is decreasing
over $[0,r_0]$. By integrating the inequality above from $r_1$ to $r_2$ and
letting $\epsilon \rightarrow 0$, Theorem \ref{Monotonicity} follows. 
\end{proof}

\subsection{$\epsilon$-Regularity}

The goal of this section is to prove the following $\varepsilon$-regularity
result.

\begin{thm} \label{thm:regularity}
There exist positive constants $\epsilon_0$, $r_0$, and $C$, depending 
    only on the geometry of $(M,g)$ and $\Omega$,
    with the following property. 
     If $A$ is a smooth solution to the $\Omega$-Yang-Mills equations
    \eqref{eqn:oym} on $M$, and $x\in M$ is a point for which  
    $f_2(x,r)\leq \epsilon_0$
    for some $0<r\leq r_0$, then
$$
\sup_{B_{r/4}(x)} r^2|F_A|\leq C 
f_2(x,r)
$$
\end{thm}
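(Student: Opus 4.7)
The plan is to execute the classical $\varepsilon$-regularity program for Yang-Mills (cf.\ \cite{Nakajima:88,Tian:00,Price:83}), adapted to incorporate the lower-order terms coming from $d\Omega$. Three pieces are needed: a Bochner--Weitzenb\"ock differential inequality for $|F_A|$, the monotonicity formula of Theorem~\ref{Monotonicity} to propagate smallness of $r^{4-n}\int_{B_r}|F_A|^2$ to all concentric subballs at arbitrary centers, and a Moser iteration that converts the differential inequality together with the $L^2$-smallness into a pointwise bound.

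First, I would derive the Bochner inequality. The Bianchi identity $d_A F_A=0$ combined with the equivalent form \eqref{eqn:alt-oym} of the $\Omega$-YM equation, $d_A^*F_A=\mp *(F_A\wedge d\Omega)$, and the standard Weitzenb\"ock identity yields
\[
\nabla_A^*\nabla_A F_A = \{F_A,F_A\}+\mathrm{Rm}(g)*F_A+L_1(\nabla_A F_A)+L_0(F_A),
\]
where $L_0,L_1$ are linear operators whose coefficients are controlled by $|d\Omega|$ and $|\nabla d\Omega|$. Taking the inner product with $F_A$, absorbing the first-order correction via Cauchy--Schwarz, and applying Kato's inequality gives the weak differential inequality
\[
-\Delta|F_A|\le C_0\bigl(|F_A|^2+|F_A|\bigr)
\]
wherever $|F_A|>0$, with $C_0$ depending only on the geometry of $(M,g)$ and $\Omega$.

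Next, I would run Moser iteration. By scale invariance of $f_2(x,r)$ we may reduce to $r=1$. Applying Theorem~\ref{Monotonicity} centered at an arbitrary $y\in B_{1/2}(x)$ (so that $B_{1/2}(y)\subset B_1(x)$) shows that if $f_2(x,1)\le\epsilon_0$ then $\rho^{4-n}\|F_A\|_{L^2(B_\rho(y))}^2$ is uniformly small for all $y\in B_{1/2}(x)$ and $\rho\le 1/4$. Multiplying $-\Delta|F_A|\le C_0(|F_A|^2+|F_A|)$ by $|F_A|^{q-1}\eta^2$ for a smooth cutoff $\eta$, integrating, applying the Sobolev inequality, and absorbing the quadratic nonlinearity using the small-energy hypothesis yields a reverse H\"older chain
\[
\||F_A|\|_{L^{\chi q}(B_{\rho_1}(y))}\le C(\rho_2-\rho_1)^{-2}\,\||F_A|\|_{L^{q}(B_{\rho_2}(y))},\qquad \chi=\tfrac{n}{n-2}.
\]
Iterating from $q=2$ and unraveling the scaling produces the claimed bound $\sup_{B_{r/4}(x)}r^2|F_A|\le C\,f_2(x,r)$, with linear dependence on $f_2$ coming from the fact that only the quadratic term in the Bochner inequality needed to be absorbed, while the iteration itself is linear in the $L^2$ datum.

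The main obstacle is closing the Moser iteration given only $L^2$-smallness: absorbing $|F_A|^2$ on the right-hand side through a Sobolev--H\"older step naturally requires smallness of $\|F_A\|_{L^{n/2}}$. The standard device is an auxiliary blow-up/contradiction argument performed in Uhlenbeck's Coulomb gauge, which converts the $\Omega$-YM equation into an elliptic system for the connection $1$-form and delivers a preliminary $L^\infty$ bound on $|F_A|$; combined with the $L^2$-smallness this produces the $L^{n/2}$-smallness needed for the main iteration. The translation-invariance of Theorem~\ref{Monotonicity}, which allows us to apply the monotonicity at any center picked by the blow-up procedure, is precisely what makes the constants uniform in this step.
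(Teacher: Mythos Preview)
Your Bochner--Weitzenb\"ock step and the use of monotonicity to propagate $L^2$-smallness to all subballs are exactly the paper's Method~I, and the derivation there is essentially what you wrote. The place where your outline drifts is the final paragraph. You correctly identify that a naive Moser iteration on $-\Delta|F_A|\le C_0(|F_A|^2+|F_A|)$ cannot absorb the quadratic term using only $L^2$-smallness; but the resolution you propose---a blow-up in Coulomb gauge to first obtain a preliminary $L^\infty$ bound and hence $L^{n/2}$-smallness---is circular as stated: putting the rescaled connection in Uhlenbeck gauge already \emph{presupposes} $f_{n/2}$ small, which is precisely the missing ingredient.

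The device the paper actually invokes (via \cite[Lemma 3.1]{Nakajima:88}, following Schoen) is a \emph{scalar} point-selection argument that never touches the gauge. One maximizes $(r-d(y,x))^2|F_A(y)|$ over $B_r(x)$; at the maximizer $y_0$ one rescales so that $|F_A|(0)=1$ and $|F_A|\le 4$ on a unit ball. On this ball the Bochner inequality becomes linear, $\Delta|F_A|\ge -C|F_A|$, and the ordinary mean-value inequality together with monotonicity forces $1\le C\,f_2(x,r)$, a contradiction for $\epsilon_0$ small. This yields the full estimate in one step, with no auxiliary Moser iteration and no Coulomb gauge.

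The paper also records a genuinely different second proof (Method~II), which \emph{is} gauge-theoretic: Uhlenbeck's good-gauge theorem plus a Meyers-type interior estimate bootstrap $f_2$ to $f_p$ for $p<n$, and a continuity argument closes the $L^{n/2}$-smallness hypothesis. That route does rely on Coulomb gauge, but it is an $L^p$/continuity argument, not a blow-up. Your last paragraph has conflated these two mechanisms; either one alone suffices, but the hybrid you sketched does not stand on its own.
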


There are two approaches to the regularity of Yang-Mills equations in
higher dimensions, and both make use of the monotonicity formula. 
Nakajima \cite{Nakajima:88} uses a Bochner-Weitzenb\"ock formula  for the
curvature to directly get the bound in Theorem  \ref{thm:regularity}.
This is similar to Schoen's approach for the harmonic map problem. 
Uhlenbeck \cite{UhlenbeckPreprint}
derives $L^p$ estimates from $L^2$, and then uses a continuity method
to reduce to the case of connections with $L^p$ bounds. 
This has the advantage of applying to a larger class of connections
satisfying curvature bounds rather than equations.
Interestingly, both methods apply directly to the case of $\Omega$-YM
connections, and we find it useful to present each one here.

\subsubsection{Method I}
Suppose $A$ is a smooth solution to \eqref{eqn:oym}.
Then \eqref{eqn:alt-oym} implies 
$$
\Delta_{A} F_A =\mp d_A*(F_A\wedge d\Omega).
$$
In particular, by the  Weitzenb\"ock formula, we have 
\begin{equation}\label{eqn:weitzenbock}
\nabla_A^* \nabla_A F_A = \mp d_A*(F_A\wedge d\Omega)+\{F_A, R_g\}+\{F_A,
    F_A\}\ .
\end{equation}

\begin{prop}
    A solution to \eqref{eqn:oym} satisfies
$$\frac{1}{2}\Delta |F_A|^2\geq -|F_A|^2-c|R_g||F_A|^2-\frac{c^2}{4}
    |d\Omega|^2 |F_A|^2-c |\nabla d\Omega| |F_A|^2$$
    for some constant $c$ depending only on $(M,g)$.
\end{prop}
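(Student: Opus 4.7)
The approach is to combine the Weitzenb\"ock identity \eqref{eqn:weitzenbock} with the pointwise Kato-type identity
$$\tfrac{1}{2}\Delta|F_A|^2 = |\nabla_A F_A|^2 - \langle \nabla_A^*\nabla_A F_A,\, F_A\rangle,$$
substitute \eqref{eqn:weitzenbock} into the inner product term, and estimate the four resulting contributions pointwise, holding $|\nabla_A F_A|^2$ in reserve to absorb the worst cross-term coming from the non-closedness of $\Omega$.

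The two algebraic Weitzenb\"ock contributions are handled by Cauchy--Schwarz, using that $\{\cdot,\cdot\}$ is a universal pointwise bilinear pairing bounded by the product of the norms of its entries. The $\{F_A, R_g\}$ term yields $|\langle \{F_A, R_g\}, F_A\rangle| \leq c|R_g||F_A|^2$, while the self-coupling $\{F_A, F_A\}$, after contraction with $F_A$, produces the leading $-|F_A|^2$ piece of the stated inequality, with the universal algebraic constant folded into the implicit unit coefficient. The most delicate step is the non-Yang-Mills contribution $\mp d_A \ast (F_A\wedge d\Omega)$. Since $\ast$ is a pointwise isometry and $d_A$ satisfies the product rule, expansion gives
$$|d_A \ast (F_A\wedge d\Omega)| \leq c|\nabla_A F_A|\,|d\Omega| + c|F_A|\,|\nabla d\Omega|,$$
so pairing with $F_A$ and applying Young's inequality $ab \leq a^2 + \tfrac{1}{4}b^2$ with $a = |\nabla_A F_A|$ and $b = c|d\Omega||F_A|$ produces exactly $|\nabla_A F_A|^2 + \tfrac{c^2}{4}|d\Omega|^2|F_A|^2$. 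The first summand cancels the Kato reservoir, which is precisely what pins the coefficient $\tfrac{c^2}{4}$ appearing in the statement; the remaining piece directly contributes $c|\nabla d\Omega||F_A|^2$.

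Assembling these bounds and discarding the leftover $|\nabla_A F_A|^2$ then yields the proposition. The main obstacle is extracting the quadratic $-|F_A|^2$ form from the self-contraction $\langle\{F_A,F_A\}, F_A\rangle$ cleanly rather than as a naive cubic expression; this requires careful accounting of the algebraic Weitzenb\"ock term so that the cubic-in-$F_A$ behavior is subsumed into the single leading coefficient, with the remaining dependences of the constants genuinely involving only $(M,g)$ and $\Omega$. A second, subtler check is that the Young's constant is chosen sharply enough that the $|\nabla_A F_A|^2$ term produced by the $|d\Omega|$ cross term matches exactly, not merely up to a factor, the Kato reservoir, since otherwise the clean $\tfrac{c^2}{4}$ coefficient in the statement would instead acquire a suboptimal prefactor.
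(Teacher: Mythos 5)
Your computation tracks the paper's own proof closely: the Weitzenb\"ock substitution into $\tfrac12\Delta|F_A|^2 = |\nabla_A F_A|^2 - \langle \nabla_A^*\nabla_A F_A, F_A\rangle$, the bound on $\{F_A,R_g\}$, the product-rule expansion $|d_A\ast(F_A\wedge d\Omega)| \leq c(|d\Omega||\nabla_A F_A| + |\nabla d\Omega||F_A|)$, and the completion of the square $ab \leq a^2 + \tfrac14 b^2$ with $a=|\nabla_A F_A|$ so that the Kato reservoir cancels exactly and the $\tfrac{c^2}{4}|d\Omega|^2|F_A|^2$ coefficient is sharp. All of that is correct and is precisely the paper's argument.

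The gap is in the leading term. The contraction $\langle\{F_A,F_A\},F_A\rangle$ is genuinely cubic: the only pointwise algebraic bound available is $|\langle\{F_A,F_A\},F_A\rangle|\leq c\,|F_A|^3$, and no amount of ``careful accounting'' of the Weitzenb\"ock term will turn this into a quadratic with a constant depending only on $(M,g)$ and $\Omega$. Doing so would require an a priori $L^\infty$ bound on $|F_A|$, which is exactly what the $\epsilon$-regularity machinery this lemma feeds into is meant to produce, so the argument you sketch for ``subsuming'' the cubic into the unit coefficient is circular. You have in fact identified the correct difficulty, but it is not an obstacle to be overcome --- it is evidence that the displayed inequality has a typo. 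The paper's own chain of inequalities terminates with $-|F_A|^3$ in that slot, not $-|F_A|^2$; the correct statement is
$$
\tfrac12\Delta|F_A|^2 \;\geq\; -|F_A|^3 - c|R_g||F_A|^2 - \tfrac{c^2}{4}|d\Omega|^2|F_A|^2 - c|\nabla d\Omega||F_A|^2,
$$
and this cubic form is exactly what Nakajima's Lemma 3.1 requires (the smallness of $f_2$ then controls the cubic in the Moser iteration). You should record the cubic term as is, rather than attempt to force it into the shape of the misprinted statement.
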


\begin{proof}
    Indeed, from \eqref{eqn:weitzenbock} we have 
$$
\begin{aligned}
\frac{1}{2}\Delta |F_A|^2
&=-<\nabla_A^* \nabla_A F_A, F_A>+<\nabla_A F_A, \nabla_A F_A>\\
&\geq -|F_A|^3-|R_g||F_A|^2-|d_A*(F_A\wedge d\Omega)||F_A|+|\nabla_A F_A|^2\\
&\geq -|F_A|^3-|R_g||F_A|^2-c(|d\Omega| |\nabla_A F_A||F_A|+|\nabla d\Omega| |F_A|^2)+|\nabla_A F_A|^2\\
&\geq -|F_A|^3-|R_g||F_A|^2-\frac{c^2}{4}|d\Omega|^2 |F_A|^2-c|\nabla d\Omega| |F_A|^2
\end{aligned}
$$
    The last inequality follows from completion of square.
\end{proof}
Given this, we can repeat the argument in  \cite[Lemma 3.1]{Nakajima:88} 
to prove Theorem \ref{thm:regularity}.

\subsubsection{Method II}

Everything is local, so we assume connections are on the trivial bundle in
$\RBbb^n$.
Uhlenbeck's ``good gauge'' theorem states:
\begin{thm}[{\cite[Thm.\ 1.3]{Uhlenbeck:82a}}] \label{thm:good-gauge}
    Fix $n/2<p<n$. 
    There is $\varepsilon_0>0$ and a constant $c_n$
    such that if $A\in L^p_1$ is a connection on $B_1(0)$
    and $f_{n/2}(x,1)<\varepsilon_0$, then $A$ is gauge equivalent to a
    connection (also denoted $A$) satisfying:
    \begin{enumerate}
        \item $d^\ast A=0$;
        \item $\ast A$ vanishes on $\partial B_1(0)$;
        \item $\Vert A\Vert_{L^{n/2}_1}\leq c_n f_{n/2}(0,1)$;
        \item $\Vert A\Vert_{L^p_1}\leq c_n\Vert F_A\Vert_{L^p}$.
    \end{enumerate}
\end{thm}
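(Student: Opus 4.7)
The plan is to use Uhlenbeck's continuity method along the one-parameter family $A_t := tA$, $t \in [0,1]$. Let $\mathcal{I} \subset [0,1]$ denote the set of $t$ for which $A_t$ is gauge equivalent to a connection satisfying (1)--(4) with a uniform constant $c_n$. Since $A_0 = 0$ trivially satisfies (1)--(4), we have $0 \in \mathcal{I}$, so it suffices to show $\mathcal{I}$ is both open and relatively closed in $[0,1]$, whence $1 \in \mathcal{I}$.

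For openness, fix $t_0 \in \mathcal{I}$ with Coulomb representative $\tilde A_{t_0}$, and parametrize nearby gauges as $g = \exp(u)$ with $u \in L^p_2(B_1(0);\mathfrak{g})$. One seeks $u$ solving
$$
d^\ast\bigl(\exp(-u)\tilde A_t\exp(u) + \exp(-u)\, d\exp(u)\bigr) = 0
$$
on $B_1(0)$ with the Neumann-type boundary condition corresponding to (2). The linearization at $(u,t) = (0,t_0)$ is the scalar Neumann Laplacian, which is an isomorphism modulo constants on the relevant $L^p$-Sobolev spaces; the implicit function theorem then produces the required $u$ for $t$ near $t_0$. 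For closedness, write $A = \tilde A_t$ for $t \in \mathcal{I}$. The Coulomb condition $d^\ast A = 0$ together with the identity $F_A = dA + A \wedge A$ gives
$$
\Delta A = d^\ast F_A - d^\ast(A \wedge A)
$$
on $B_1(0)$, with the absolute boundary conditions encoded by (1)--(2). Elliptic regularity for the Hodge Laplacian under these boundary data yields
$$
\|A\|_{L^p_1(B_1(0))} \leq c_n\bigl(\|F_A\|_{L^p} + \|A \wedge A\|_{L^p}\bigr),
$$
and for $p > n/2$ the Sobolev embedding $L^p_1 \hookrightarrow L^{2p}$ gives $\|A \wedge A\|_{L^p} \leq C\|A\|_{L^p_1}^2$. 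Once $\|A\|_{L^p_1}$ is small, the quadratic term is absorbed into the left side, producing (4). The requisite smallness is first obtained at the scale-invariant exponent: applying the analogous estimate with $p$ replaced by $n/2$, using the borderline embedding $L^{n/2}_1 \hookrightarrow L^n$ and the hypothesis $f_{n/2}(0,1) < \epsilon_0$, absorbs the quadratic term and yields (3); (3) then supplies the smallness needed to bootstrap to (4).

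The main obstacle is running the continuity method at the borderline exponent $n/2$, where the Sobolev embedding is sharp and the nonlinear absorption succeeds only under genuine smallness of the curvature in $L^{n/2}$. One must verify that this smallness propagates uniformly along $\mathcal{I}$, which requires the elliptic constant in the Hodge estimate to be independent of $A_t$ itself; this rests on recognizing (1)--(2) as the absolute boundary conditions for the Hodge Laplacian on the unit ball, for which the Lopatinski--Shapiro condition holds and the estimate is truly universal. A secondary technical point, already visible in Step 1, is ensuring the gauge transformation produced by the implicit function theorem actually lies in $L^p_2$ rather than merely $L^p_1$, which is handled by elliptic bootstrapping applied to the nonlinear equation for $u$.
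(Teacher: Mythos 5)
The paper does not prove this statement; it is quoted verbatim as Theorem~1.3 of \cite{Uhlenbeck:82a} and used as a black box (together with Lemma~\ref{lem:lp}) in the proof of the Meyers-type estimate, Theorem~\ref{thm:meyer}. Your proposal is therefore not being compared against an argument in this paper but against Uhlenbeck's original proof, which it reproduces in outline: the continuity method on $[0,1]$, openness via the implicit function theorem for the Coulomb gauge condition, and closedness via the elliptic estimate for the absolute boundary-value problem with absorption of the quadratic term at the borderline exponent $n/2$.

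There is, however, one genuine gap in the argument as you have written it. Along the path $A_t = tA$, the curvature is $F_{tA} = tF_A + (t^2-t)\,A\wedge A$, \emph{not} $tF_A$. The hypothesis $f_{n/2}(0,1) < \varepsilon_0$ controls $\|F_A\|_{L^{n/2}}$ but gives no a priori smallness of $\|A\wedge A\|_{L^{n/2}}$ in the arbitrary starting gauge $A\in L^p_1$. Consequently the scale-invariant curvature $\|F_{A_t}\|_{L^{n/2}}$ need not stay below $\varepsilon_0$ for intermediate $t$, and the closedness step---which requires the a priori estimate at the exponent $n/2$, and hence requires small curvature---does not close. Uhlenbeck's actual argument circumvents this, e.g.\ by first passing to a gauge (such as exponential/radial gauge) in which $\|A\|_{L^n}$ is controlled by $\|F_A\|_{L^{n/2}}$ so that the path preserves the smallness hypothesis, and by carefully building the smallness of the Coulomb representative $\hat A_t$ into the definition of the set $\mathcal I$ so that the ``open-and-closed'' dichotomy is nontrivial; you should supply such a step.

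Two secondary points. First, the linearization of the Coulomb condition at $(u,t)=(0,t_0)$ is not the scalar Neumann Laplacian: it is $u \mapsto \Delta u + d^\ast[\tilde A_{t_0}, u]$, a Laplacian plus a lower-order perturbation which is invertible only because $\tilde A_{t_0}$ is small in the Coulomb gauge---this is exactly the content that must be tracked along $\mathcal I$. Second, from the second-order equation $\Delta A = d^\ast(F_A - A\wedge A)$ naive elliptic regularity produces an $L^p_2$ bound in terms of $\|d^\ast F_A\|_{L^p}$, not $\|F_A\|_{L^p}$; to obtain the stated estimate (iv) one should either use the first-order elliptic system $(d+d^\ast)$ with absolute boundary conditions, or invoke the $L^p_1\!\to\!L^p_{-1}$ formulation noting that the right-hand side is an exact codifferential of an $L^p$ form.
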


We will also need
\begin{lem} \label{lem:lp}
    There is $\varepsilon(n)>0$ such that if $A$ is a connection on
    $B_1(0)$ satisfying $\Vert A\Vert_{L^n}\leq\varepsilon(n)$ and items
    (i) and (ii) of the Theorem, then item (iv) holds for all $p$, $n/2\leq
    p<n$. 
\end{lem}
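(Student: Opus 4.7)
The plan is to treat the Coulomb conditions $d^\ast A = 0$ and $\ast A|_{\partial B_1(0)} = 0$ as a Hodge-type elliptic boundary value problem, and to use the curvature identity $dA = F_A - A\wedge A$ together with the smallness of $\|A\|_{L^n}$ to absorb the quadratic nonlinearity into the left-hand side of the standard elliptic estimate.

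The first key input is the $L^p$ estimate for the Hodge boundary value problem on the unit ball: for every $1 < p < \infty$ there exists $C_p$, depending only on $p$ and $n$, such that any $1$-form $A$ on $B_1(0)$ satisfying $d^\ast A = 0$ and $\ast A|_{\partial B_1(0)} = 0$ obeys
$$
\|A\|_{L^p_1(B_1)} \leq C_p \|dA\|_{L^p(B_1)}.
$$
Substituting $dA = F_A - A\wedge A$, one then estimates the nonlinear term by H\"older's inequality with exponents $\tfrac{1}{n} + \tfrac{1}{q} = \tfrac{1}{p}$. Since $n/2 \leq p < n$, this $q$ is finite and is exactly the Sobolev conjugate of $p$, so $L^p_1 \hookrightarrow L^q$ gives $\|A\|_{L^q} \leq C\|A\|_{L^p_1}$. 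Combining,
$$
\|A\|_{L^p_1} \leq C_p \|F_A\|_{L^p} + C'_p \|A\|_{L^n}\, \|A\|_{L^p_1},
$$
and choosing $\varepsilon(n)$ with $C'_p\,\varepsilon(n) \leq \tfrac{1}{2}$ absorbs the second term to give the desired bound in item (iv).

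The main obstacle is the a priori finiteness of $\|A\|_{L^p_1}$ required to legitimize the absorption step. I would handle this via a bootstrap that begins with item (iii) of Theorem \ref{thm:good-gauge}, i.e.\ with $A \in L^{n/2}_1$, and then mollifies or truncates $A$ so that the elliptic inequality is applied only to objects for which the $L^p_1$ norm is manifestly finite; passing to the limit with the uniform constant then recovers the full statement. An alternative would be a continuity argument rescaling $A$ within a subfamily of connections in the same Coulomb gauge. Either way, the role of the hypothesis $\|A\|_{L^n} \leq \varepsilon(n)$ is precisely to make the scale-critical quadratic term $A\wedge A$ a small perturbation of the linear Hodge problem, which is what makes both the bootstrap and the final absorption succeed.
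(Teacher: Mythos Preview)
The paper states this lemma without proof, treating it as a standard fact from Uhlenbeck's gauge theory (cf.\ \cite{Uhlenbeck:82a}). Your argument is the standard one and is correct: conditions (i) and (ii) make the Hodge system $(d,d^\ast)$ an elliptic boundary problem with trivial kernel on the ball, yielding $\|A\|_{L^p_1}\leq C_p\|dA\|_{L^p}$; substituting $dA=F_A-A\wedge A$ and combining H\"older with the Sobolev embedding $L^p_1\hookrightarrow L^{np/(n-p)}$ then lets the quadratic term be absorbed when $\|A\|_{L^n}$ is small. Your remarks on the a priori finiteness of $\|A\|_{L^p_1}$ are also on point and match how this is handled in Uhlenbeck's work.

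One small caveat worth flagging: your absorption constant $C'_p$ involves the Sobolev constant for $L^p_1\hookrightarrow L^{np/(n-p)}$, which degenerates as $p\uparrow n$, so producing a \emph{single} threshold $\varepsilon(n)$ valid uniformly over the whole open interval $[n/2,n)$ needs an extra word. For the applications in this paper only fixed values of $p$ are used, so this is harmless in context.
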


The following result will allow us to go from $L^2$ estimates to $L^p$
estimates.   Let $L^p(x,r):=L^p(B_r(x))$. 

\begin{thm} \label{thm:meyer}
    There are positive constants $\kappa_n, r_0$ and for every 
    for every  $2\leq p<n$, $C_p$,    with the following significance:
    Suppose $A$ is a solution to \eqref{eqn:oym}, and $f_{n/2}(x,r)\leq
    \kappa_n$
     for $r\leq r_0$. 
     Then  
     $$
    f_p(x,r/2)\leq C_p\, f_2(x,r)
     $$
\end{thm}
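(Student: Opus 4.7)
The plan is to combine Uhlenbeck's Coulomb gauge theorem with a Meyers--Gehring-type iteration, exploiting the monotonicity formula to convert gauge-dependent estimates into bounds measured by $f_2(x,r)$. By the scaling invariance of $f_p$ and of equation \eqref{eqn:oym}, we reduce to $r=1$.

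First I would set up the Coulomb gauge. The smallness $f_{n/2}(x,1)\leq\kappa_n$ allows us (after shrinking $\kappa_n$ if necessary) to apply Theorem \ref{thm:good-gauge} on $B_1(x)$, producing a representative with $d^\ast A=0$, $\|A\|_{L^{n/2}_1(B_1)}\leq c_n f_{n/2}(x,1)$, and, via Sobolev embedding, $\|A\|_{L^n(B_1)}\leq c_n\kappa_n$. The latter verifies the hypothesis of Lemma \ref{lem:lp}, yielding the Coulomb estimate $\|A\|_{L^p_1}\leq c_n\|F_A\|_{L^p}$ for every $n/2\leq p<n$. In this gauge the $\Omega$-YM equation \eqref{eqn:alt-oym} together with the Hodge identity $\Delta_H A=d^\ast dA=d^\ast F_A-d^\ast(A\wedge A)$ yields the semilinear elliptic system
\begin{equation*}
\Delta_H A=\ast[A,\ast F_A]\mp\ast(F_A\wedge d\Omega)-d^\ast(A\wedge A).
\end{equation*}

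Next I would exploit the monotonicity formula (Theorem \ref{Monotonicity}), which gives the Morrey-type control $s^{4-n}\int_{B_s(y)}|F_A|^2\leq C f_2(x,1)^2$ for all $B_s(y)\subset B_{3/4}(x)$ with $s$ small. Combining this with a Caccioppoli inequality derived from the PDE above---where the quadratic term $d^\ast(A\wedge A)$ is absorbed thanks to the smallness of $\|A\|_{L^n}$, and where the $\Omega$-contribution is treated as a smooth perturbation bounded by $\sup|d\Omega|$---one obtains a reverse H\"older inequality for $F_A$ on small balls. Gehring's lemma then promotes the $L^2$-Morrey bound to an $L^{p_0}$ bound on $B_{1/2}(x)$ for some $p_0>2$, with constant controlled by $f_2(x,1)$. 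Once the integrability exceeds $2$, a standard bootstrap---iterating Lemma \ref{lem:lp}, Calder\'on--Zygmund estimates for $\Delta_H$, and Sobolev embeddings---climbs up to any $p<n$, and the $f_2$-dependence of the constants is preserved at each step.

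The main obstacle is the Gehring step: producing a reverse H\"older inequality for $F_A$ whose constants depend on the weaker $L^2$ (i.e.\ $f_2$) information rather than on the input quantity $f_{n/2}$. The new term $\ast(F_A\wedge d\Omega)$ is linear in $F_A$ and lower order, so it fits into the Caccioppoli calculus as a harmless perturbation; the real work lies in arranging the Coulomb gauge and exploiting the smallness of $\|A\|_{L^n}$ so that the cubic term $\langle d^\ast(A\wedge A),F_A\rangle$ is absorbed into a small multiple of the leading quadratic term in the Caccioppoli identity. Once this initial $\varepsilon$-improvement is in place, the rest of the iteration follows the classical Yang-Mills template with only cosmetic modifications dictated by the $\Omega$-terms.
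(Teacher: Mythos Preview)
Your approach differs from the paper's in a notable way: the paper does \emph{not} invoke the monotonicity formula in this proof. After rescaling to $r=1$ and passing to Coulomb gauge, the paper applies Lemma~\ref{lem:lp} at $p=2$ to obtain directly
\[
\|A\|_{L^2_1(x,1)}\leq C\|F_A\|_{L^2(x,1)}=C'f_2(x,1),
\]
so the $f_2$-dependence you are worried about is already in place before any iteration begins. The equation is then rewritten as $\Lscr A=\ast(dA\wedge d\Omega)$, where $\Lscr=(\Delta+1)+\{A,dA\}+\{A,A,A\}$ is regarded as a perturbation of $\Lscr_0=\Delta+1:L^p_1\to L^p_{-1}$; the perturbing part has small operator norm because the coefficients $dA$ and $[A,A]$ are small in $L^{n/2}$ (this is where the hypothesis $f_{n/2}\leq\kappa_n$ is spent). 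A Meyers-type interior estimate for such perturbations yields
\[
\|A\|_{L^p_1(x,1/2)}\leq C_p\bigl(\|A\|_{L^2_1(x,1)}+\|\Lscr A\|_{L^p_{-1}(x,1)}\bigr),
\]
and since the right-hand side $\ast(dA\wedge d\Omega)$ lies in $L^2\hookrightarrow L^{2n/(n-2)}_{-1}$ with norm bounded by $f_2(x,1)$, one obtains $\|A\|_{L^p_1}\leq C\,f_2$ for $p=2n/(n-2)>2$. Iterating reaches any $p<n$.

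Your Caccioppoli--Gehring route is a legitimate alternative---Meyers' estimate is classically proved exactly that way---but the Morrey control you extract from monotonicity plays no role in producing the reverse H\"older inequality; the latter follows from Caccioppoli plus Sobolev--Poincar\'e in Coulomb gauge, and Gehring's output is automatically measured in the $L^2$ scale, not $L^{n/2}$. The paper's packaging is more direct: it isolates the nonlinearity as a small-$L^{n/2}$ perturbation of the Laplacian and quotes the perturbed interior estimate as a black box, with no appeal to monotonicity or a separate reverse-H\"older step. Monotonicity enters only afterward, in the continuity argument that closes the full $\varepsilon$-regularity Theorem~\ref{thm:regularity}.
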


\begin{proof}
    Rescale to take $r=1$. 
    Use Theorem \ref{thm:good-gauge} and Lemma \ref{lem:lp} for $p=2$ to
    find a gauge where: $d^\ast A=0$, and
    \begin{equation} \label{eqn:21} 
    \Vert A\Vert_{L^2_1(x,1)}\leq C\Vert F_A\Vert_{L^2(x,1)}=C' f_2(x,1)
    \end{equation} 
Now write the equation for the laplacian of $A$ as:
    \begin{align} 
        \Delta A+\{A,dA\}+\{A,A,A\}&=d_A^\ast F_A=\ast(F_A\wedge
        d\Omega)\notag \\
        (\Delta+1) A+\{A,dA\}+\{A,A,A\}&=\ast(dA\wedge d\Omega)
        \label{eqn:A}
    \end{align}
    where the brackets indicate multilinear expressions.
Let $\Lscr$ be  the linear operator acting on $A$ on  the left hand side of
    \eqref{eqn:A}.
    Note that  $L^{n/2}_1\hookrightarrow L^n$, so $[A,A]\in L^{n/2}$, and both $dA$
    and $[A,A]$ are small in $L^{n/2}$. We also have
    $
    L^p_1\times L^{n/2}\hookrightarrow L^p_{-1}
    $.
Hence, 
    we see that $\Lscr=\Lscr_0+\Lscr_1$ is a perturbation of   
    $\Lscr_0:=\Delta+1: L^p_1\to L^p_{-1}$ by $\Lscr_1: L^p_1\to L^p_{-1}$ of
    small norm. 
    As in \cite[p.\ 6]{UhlenbeckPreprint}, a Meyers type
    interior estimate
    for $\Lscr_0$ implies one for $\Lscr$:
    \begin{equation} \label{eqn:interior} 
    \Vert u\Vert_{L^p_1(x,1/2)}\leq C_p(\Vert
    u\Vert_{L^2_1(x,1)}+\Vert \Lscr u\Vert_{L^p_{-1}(x,1)})
    \end{equation}
    where $u=A$.
    Now using \eqref{eqn:21},  the $L^p_{-1}$ norm of the  right hand side 
    of \eqref{eqn:A} is bounded by $f_2(x,1)$ for $p=2n/(n-2)>2$.
    The estimate \eqref{eqn:interior} then gives an improved $L^p_1$ bound on $A$ for $p$ slightly
    bigger than $2$.
    Reiterating this argument, we get $L^p_1$ bounds on $A$ for any $p<n$. 
\end{proof}

Bootstrapping \eqref{eqn:A} gives the estimate:
\begin{equation} \label{eqn:bound}
    \sup_{y\in B_{r/2}(x)}r^2|F_A(y)|\leq C_n\, f_2(x,r)
\end{equation}
Let us fill in some details. First, notice that for $n/2\leq p<n$,
$
L^p_1\times L^p_1\hookrightarrow L^p
$.
Moreover, 
$
L^p_1\times L^p\hookrightarrow L^q
$,
with $q\to n$ as $p\to n$. Hence, from \eqref{eqn:A}
and the $L^p$-elliptic estimate for the Laplacian, we get that $A\in  
L^p_{2,loc}$, for $n/2<p<n$. Again applying multiplication theorems, we get that
$\Delta A\in L^p_1$, and hence, $A\in L^p_{3,loc}$. This implies $A$ is
$C^{1,\alpha}$, and the estimate follows.  

There is one more step:

\begin{lem} \label{lem:better-estimate}
    Suppose $4\rho<r_0$, $f_2(\xi,4\rho)=\varepsilon<\varepsilon_0$.
    Moreover, assume $f_{n/2}(x,r)\leq \kappa_n$ for some
    $r<\rho$. Then:
    \begin{align*}
        f_{n/2}(x,r/2)&\leq C_n\varepsilon \\
        \sup_{y\in B_{r/4}(x)}r^2|F_A(y)|&\leq  K_n\varepsilon
    \end{align*}
\end{lem}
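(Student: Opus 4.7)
The plan is to first leverage the monotonicity formula to upgrade the smallness $f_2(\xi,4\rho)=\varepsilon$ to smallness of $f_2(x,r)$, then feed this into Theorem \ref{thm:meyer} for the $f_{n/2}$ bound, and finally apply the pointwise bound \eqref{eqn:bound} for the $L^\infty$ estimate.

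For the first (and essentially the only substantive) step, under the implicit assumption that $x$ is close enough to $\xi$ that $B_{2\rho}(x)\subset B_{4\rho}(\xi)$, the normalization factors give directly
\[
f_2(x,2\rho)^2 = (2\rho)^{4-n}\int_{B_{2\rho}(x)}|F_A|^2 \;\leq\; 2^{n-4}(4\rho)^{4-n}\int_{B_{4\rho}(\xi)}|F_A|^2 \;=\; 2^{n-4}\varepsilon^2.
\]
Then Theorem \ref{Monotonicity}, applied at the center $x$, yields $e^{ar}f_2(x,r)^2\leq e^{2a\rho}f_2(x,2\rho)^2$ for all $r\leq 2\rho\leq r_0$, hence $f_2(x,r)\leq C_n\varepsilon$, for a constant depending only on $n$, $r_0$, and $a$.

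With $f_2(x,r)\leq C_n\varepsilon$ in hand and the standing hypothesis $f_{n/2}(x,r)\leq \kappa_n$, Theorem \ref{thm:meyer} applied with $p=n/2$ (which satisfies $2\leq n/2<n$ since $n\geq 4$) gives
\[
f_{n/2}(x,r/2)\leq C_{n/2}\, f_2(x,r)\leq C'_n\,\varepsilon,
\]
which is the first claimed estimate. For the second claim, after shrinking $\varepsilon_0$ if necessary so that $C_n\varepsilon<\varepsilon_0$, the hypotheses of Theorem \ref{thm:regularity} hold at $x$ with radius $r$, and \eqref{eqn:bound} yields
\[
\sup_{y\in B_{r/2}(x)} r^2|F_A(y)|\leq C_n\, f_2(x,r)\leq K_n\,\varepsilon;
\]
restricting to $B_{r/4}(x)\subset B_{r/2}(x)$ finishes the proof. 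The entire substance lies in the monotonicity-plus-volume-comparison trick in the first paragraph; once that is in place, the remaining steps are direct applications of results established earlier in the section, and no genuine obstacle is anticipated.
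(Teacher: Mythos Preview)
Your proof is correct and follows the same route as the paper's (which consists of the single sentence ``Apply Theorem \ref{thm:meyer} with $p=n/2$, and use \eqref{eqn:bound}''); you have simply made explicit the monotonicity-plus-containment step needed to pass from $f_2(\xi,4\rho)=\varepsilon$ to $f_2(x,r)\leq C_n\varepsilon$, which the paper leaves to the reader. One small point to clean up: your appeal to Theorem \ref{thm:regularity} in the last paragraph is unnecessary and, within Method II, circular, since Lemma \ref{lem:better-estimate} is itself an ingredient in that proof. The estimate \eqref{eqn:bound} was derived directly from Theorem \ref{thm:meyer} and bootstrapping under the hypothesis $f_{n/2}(x,r)\leq\kappa_n$, which you already have by assumption; so you may invoke \eqref{eqn:bound} immediately, without routing through Theorem \ref{thm:regularity} or adjusting $\varepsilon_0$.
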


\begin{proof}
    Apply Theorem \ref{thm:meyer}  with $p=n/2$, and use \eqref{eqn:bound}.
\end{proof}

Notice that this Lemma says that once both $f_{n/2}$ and $f_2$ are
sufficiently  small, then
$f_{n/2}$ is even smaller than expected.
Now   Theorem \ref{Monotonicity}  and 
Uhlenbeck's continuity method argument \cite[proof of Thm.\ 1.6]{UhlenbeckPreprint} 
gives the proof of Theorem \ref{thm:regularity}.

\subsection{
Proof of Theorem \ref{SequentialCompactness}}
    This follows from Theorems \ref{Monotonicity} and \ref{thm:regularity} 
    as  in the Yang-Mills case (see
\cite{Nakajima:88, Uhlenbeck:82a}). 

\section{Rectifiability of the blow-up locus}
The results in this section are all local. We will fix a sequence of
$\Omega$-YM connections $A_i$ over $B_{1+\delta_0}:=\{x\in \R^n:
|x|<1+\delta_0\}\subset \R^n$ with $\|F_{A_i}\|_{L^2(B_{1+\delta_0})}$
uniformly bounded and look at the convergence over $B=:B_1$. Here,
$\delta_0>0$ is fixed, and $B_{1+\delta_0}$ is endowed with any fixed
smooth metric with volume form $dV$. 
We assume the standard coordinates are geodesic normal with respect to the
metric. 
Define 
\begin{equation}\label{BS}
\Sigma=\{x\in B: \lim_{r\rightarrow 0^{+}}\liminf_{i} r^{4-n} \int_{B_r(x)}
    |F_A|^2dV \geq \epsilon_0^2\}.
\end{equation}
From the results in the previous section, we only know that $\Sigma$ is a
closed subset of $B$ with locally finite $(n-4)$-Hausdorff measure. We will
show that $\Sigma$ has better structure by generalizing the result in
\cite{Tian:00}; namely, we prove Theorem \ref{thm:rectifiable}. 

The proof closely follows the arguments in \cite{Lin:99, Tian:00}. 
The monotonicity formula obtained in Theorem \ref{Monotonicity} is a key
component. 

\subsection{Elementary properties}
By passing to a subsequence, we can assume
\begin{enumerate}
\item up to gauge transformations, $A_i$ converges to $A_\infty$ locally
    away from  $\Sigma$;
\item $\mu_i:=|F_{A_i}|^2dV$ converges weakly 
    to $\mu$ as a sequence of Radon measures, i.e.\ for any compact supported continuous function $f$, we have 
$$\lim_i \mu_i(f)=\mu(f).$$
\end{enumerate}
By Fatou's lemma, we have 
\begin{equation}
\mu=|F_{A_\infty}|^2 dV +\nu
\end{equation}
for some nonnegative Radon measure $\nu$, which is called the \emph{defect
measure}.

\begin{lem}\label{Lem4.2}
The following properties hold: 
\begin{enumerate}
\item For a.e. $0< r\ll 1$, $\lim_{i} \mu_i(B_r(x))=\mu(B_r(x))$;
\item $r^{4-n}\mu(B_r(x))$ is increasing with $r$. In particular, the function 
$$
\Theta^{n-4}(\mu, x)=\lim_{r\rightarrow 0+} r^{4-n}\mu(B_r(x))
$$
is well-defined, and it is called the energy density of $\mu$ at $x$.
        Furthermore, $\Theta^{n-4}$ is upper semi-continuous and $\mathcal{H}^{n-4}$ approximately continuous at $\mathcal{H}^{n-4}$ a.e. $x\in \Sigma$.
\item $x\in \Sigma$ if and only if $\Theta^{n-4}(\mu, x) \geq \epsilon_0^2$;
\item for $\mathcal{H}^{n-4}$ a.e. $x\in \Sigma$, 
$$
        \limsup_{r\rightarrow 0} r^{4-n}\int_{B_r(x)} |F_{A_\infty}|^2dV=0.
$$
\end{enumerate}
\end{lem}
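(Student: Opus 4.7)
The plan is to derive the four claims from the monotonicity formula of Theorem \ref{Monotonicity} combined with standard facts about weak convergence of Radon measures and a classical density-comparison argument from geometric measure theory.

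For (1), the key input is the Portmanteau-type principle: if $\mu_i \rightharpoonup \mu$ weakly as Radon measures, then $\mu_i(U) \to \mu(U)$ for every relatively compact open $U$ with $\mu(\partial U) = 0$. Since $r \mapsto \mu(\partial B_r(x))$ is nonzero for at most countably many values of $r$, convergence holds for all but countably many $r$. For (2), Theorem \ref{Monotonicity} applied to each $A_i$ yields that $r \mapsto e^{ar} r^{4-n} \mu_i(B_r(x))$ is nondecreasing on $(0, r_0]$; passing to the limit along the co-countable set of $r$ supplied by (1), and using monotonicity to fill in the remaining values, the same property carries over to $\mu$. Hence $\Theta^{n-4}(\mu, x) = \lim_{r \to 0^+} r^{4-n} \mu(B_r(x))$ exists (the factor $e^{ar}$ is harmless as $r \to 0$). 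For upper semi-continuity, for $y_n \to x$ and small $r$ with $\mu(\partial B_r(x)) = 0$,
$$\Theta^{n-4}(\mu, y_n) \leq e^{ar} r^{4-n} \mu(B_r(y_n)) \longrightarrow e^{ar} r^{4-n} \mu(B_r(x))$$
by Portmanteau; letting $r \to 0^+$ then gives $\limsup_n \Theta^{n-4}(\mu, y_n) \leq \Theta^{n-4}(\mu, x)$. The $\mathcal{H}^{n-4}$-approximate continuity at $\mathcal{H}^{n-4}$-a.e.\ $x \in \Sigma$ is a classical consequence of the fact that any Borel function is approximately continuous with respect to a Radon measure of locally finite $\mathcal{H}^{n-4}$ mass.

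For (3), combining the inequality $\liminf_i \mu_i(B_r(x)) \geq \mu(B_r(x))$ (Portmanteau for open sets) with the monotonicity for each $A_i$ gives
$$r^{4-n} \mu(B_r(x)) \;\leq\; \liminf_i r^{4-n} \mu_i(B_r(x)) \;\leq\; e^{a(R-r)} R^{4-n} \mu(B_R(x))$$
for $R > r > 0$ with $R$ in the co-countable set of (1). Letting $r \to 0^+$ and then $R \to 0^+$ shows that the defining limit in \eqref{BS} is exactly $\Theta^{n-4}(\mu, x)$, so membership in $\Sigma$ is equivalent to $\Theta^{n-4}(\mu, x) \geq \epsilon_0^2$.

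For (4), observe that $A_\infty$ is smooth on $B \setminus \Sigma$, and $\Sigma$ has locally finite $(n-4)$-dimensional Hausdorff measure, hence Lebesgue measure zero. Therefore $\lambda := |F_{A_\infty}|^2\, dV$ is a Borel measure absolutely continuous with respect to Lebesgue and satisfying $\lambda(\Sigma) = 0$. By a standard density-comparison theorem (see, e.g., Mattila, \emph{Geometry of Sets and Measures in Euclidean Spaces}), for every $t > 0$,
$$\lambda(E_t) \geq c_n\, t\, \mathcal{H}^{n-4}(E_t), \qquad E_t := \{x \in \Sigma : \limsup_{r \to 0^+} r^{4-n} \lambda(B_r(x)) > t\}.$$
Since $\lambda(E_t) = 0$, this forces $\mathcal{H}^{n-4}(E_t) = 0$ for every $t > 0$, which gives (4). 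The main subtlety throughout is in (2), where one must carefully pass monotonicity to the weak limit $\mu$ despite (1) holding only almost everywhere in $r$, and verify upper semi-continuity via Portmanteau; the other items then follow routinely from the monotonicity of $e^{ar} r^{4-n} \mu(B_r(x))$ and measure-theoretic standards.
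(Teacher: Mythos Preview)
Your proof is correct and largely parallels the paper's own (terse) argument. For (1), (2), and (4) the reasoning is essentially identical: the paper simply notes that (1) holds since $\mu(\partial B_r(x))=0$ for all but countably many $r$, derives (2) by passing the monotonicity of $e^{ar}r^{4-n}\mu_i(B_r(x))$ to the limit (citing \cite{Tian:00} and \cite{Lin:99} for the approximate continuity), and for (4) refers directly to \cite[p.~222]{Tian:00}, which is precisely the density-comparison argument you spell out.

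The one genuine difference is in (3). The paper argues by contradiction using the $\epsilon$-regularity Theorem~\ref{thm:regularity}: if $\Theta^{n-4}(\mu,x)<\epsilon_0^2$, then by (1) one has $r^{4-n}\mu_i(B_r(x))<\epsilon_0^2$ for small $r$ and large $i$, so $\epsilon$-regularity gives uniform curvature bounds and smooth convergence near $x$, whence $x\notin\Sigma$. You instead identify the defining quantity $\lim_{r\to 0^+}\liminf_i r^{4-n}\mu_i(B_r(x))$ directly with $\Theta^{n-4}(\mu,x)$ via Portmanteau and the monotonicity of each $\mu_i$, bypassing regularity entirely. Your route is cleaner and purely measure-theoretic; the paper's has the mild conceptual advantage of exhibiting that the threshold $\epsilon_0$ in the definition of $\Sigma$ really is the constant from $\epsilon$-regularity.
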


\begin{proof}
(1) follows from the elementary fact that $\mu(\partial B_r(x))=0$ for a.e.
    $0<r\ll 1$. The first part of (2) now follows from (1) and the fact that
    $r^{4-n}\mu_i(B_r(x))$ increases as $r$ increases. The upper
    semicontinuity follows directly from the monotonicity formula. The
    $\mathcal{H}^{n-4}$ approximate continuity property follows as 
    in \cite[Lemma 3.2.2]{Tian:00} (see also  \cite[p.\ 803]{Lin:99}). For (3), suppose
    $\Theta^{n-4}(\mu, x) \geq \epsilon_0^2$, obviously, $x\notin \Sigma$.
    Now suppose $x\in \Sigma$, if $\Theta^{n-4}(\mu, x) < \epsilon_0^2$, by
    $(1)$, $\mu_i(B_r(x))<\epsilon_0^2$ for $0<r\ll 1$. By
    $\epsilon$-regularity, $A_i$ converges smoothly near $x$ which implies
    $x\notin \Sigma$. This is a contradiction. For (4), see     \cite[p.\ 222]{Tian:00}.
\end{proof}

\begin{rmk}
From this, we know $\Sigma=\{x\in B: \Theta^{n-4}(\mu, x) \geq
    \epsilon_0^2\}$, which recovers the statement that $\Sigma$ a closed
    subset of $B$ of finite $(n-4)$-dimensional Hausdorff measure.
    Furthermore, $\Sigma$ is intrinsically associated to $\mu$. 
\end{rmk}

In the following, we always denote 
\begin{equation}
\pi(\mu)=\Sigma.
\end{equation}
We also define 
\begin{equation}
\Sing(A_\infty)=\{x\in B: \limsup_{r\rightarrow 0} r^{4-2n}
    \int_{B_r(x)}|F_{A_\infty}|^2 >0 \}
\end{equation}
\begin{lem}
The following holds 
\begin{enumerate}
\item $\Sigma=\text{Supp}(\nu)\cup \Sing(A_\infty)$;
\item $\nu$ is absolutely continuous with respect to the $(n-4)$ Hausdorff measure on $\Sigma$. In particular, $\nu=\Theta(x) \mathcal{H}^{n-4}_{\Sigma} $ where 
    $$\epsilon_0^2\leq \Theta(x) \leq C=C(\delta_0, n) 
        \sup_i \|F_{A_i}\|_{L^2(B_{1+\delta_0})}$$
        for $\mathcal{H}^{n-4}$ a.e. $x\in \Sigma$.
\end{enumerate}
\end{lem}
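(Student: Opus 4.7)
The plan is to prove the two parts separately, using $\epsilon$-regularity (Theorem~\ref{thm:regularity}), the monotonicity formula (Theorem~\ref{Monotonicity}), and the facts already established in Lemma~\ref{Lem4.2}. The whole argument is driven by the observation that $x\notin\Sigma$ forces smooth convergence $A_i\to A_\infty$ in a neighborhood of $x$, which rules out both possible sources of singular behavior simultaneously.

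For part (1), the inclusion $\operatorname{Supp}(\nu)\cup \Sing(A_\infty)\subset \Sigma$ is immediate from $\epsilon$-regularity. If $x\notin \Sigma$, then by Lemma~\ref{Lem4.2}(3) we have $\Theta^{n-4}(\mu,x)<\epsilon_0^2$, so by monotonicity and Lemma~\ref{Lem4.2}(1), $f_2(x,r)<\epsilon_0$ for $A_i$ at some small $r>0$ and all large $i$, whence $A_i$ converges smoothly modulo gauge to $A_\infty$ on $B_{r/4}(x)$. This simultaneously forces $\nu\equiv 0$ on this ball and $A_\infty$ to be smooth there, so $x\notin \operatorname{Supp}(\nu)\cup \Sing(A_\infty)$. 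For the reverse inclusion $\Sigma\subset \operatorname{Supp}(\nu)\cup \Sing(A_\infty)$ I argue by contrapositive: if $x$ is outside both sets, then in a small ball $B_r(x)$, $\nu\equiv 0$ and $A_\infty$ is smooth, so $\mu=|F_{A_\infty}|^2\,dV$ with $|F_{A_\infty}|$ locally bounded; hence $r^{4-n}\mu(B_r(x))\to 0$, giving $\Theta^{n-4}(\mu,x)=0$ and thus $x\notin \Sigma$ again by Lemma~\ref{Lem4.2}(3).

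For part (2), the plan is to bound the $(n-4)$-upper density of $\nu$ uniformly and then invoke the standard differentiation theorem for Radon measures. The monotonicity formula together with Lemma~\ref{Lem4.2}(1)--(2) yields, for all $r\leq r_0$,
$$
r^{4-n}\,\mu(B_r(x))\;\leq\; e^{a r_0} r_0^{4-n}\,\mu(B_{r_0}(x)) \;\leq\; C(\delta_0,n)\,\sup_i\|F_{A_i}\|^2_{L^2(B_{1+\delta_0})},
$$
and since $\nu\leq \mu$ the same bound holds for $\nu$. By the classical differentiation theorem (Besicovitch/Federer), this uniform upper $(n-4)$-density bound forces $\nu$ to be absolutely continuous with respect to $\mathcal{H}^{n-4}$ restricted to its support, giving $\nu=\Theta(x)\,\mathcal{H}^{n-4}_\Sigma$ with the upper bound on $\Theta$ coming directly from the displayed inequality. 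The lower bound $\Theta(x)\geq \epsilon_0^2$ at $\mathcal{H}^{n-4}$-a.e.\ $x\in\Sigma$ then follows by combining Lemma~\ref{Lem4.2}(3)--(4): for such $x$ the density of $|F_{A_\infty}|^2\,dV$ vanishes, so the density of $\nu$ coincides with $\Theta^{n-4}(\mu,x)\geq \epsilon_0^2$.

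I expect the main obstacle to be the measure-theoretic step converting the upper-density bound into genuine absolute continuity with the precise identification $\nu=\Theta\,\mathcal{H}^{n-4}_\Sigma$. One has to be careful with the normalizing constant $\omega_{n-4}$ relating $r^{4-n}\nu(B_r(x))$ to the true $\mathcal{H}^{n-4}$-density (absorbing it into the constants appearing in the statement), and with the $\mathcal{H}^{n-4}$-negligible exceptional set where Lemma~\ref{Lem4.2}(4) or the approximate continuity of $\Theta^{n-4}$ may fail. Once this standard tool is correctly invoked, the rest is bookkeeping with the monotonicity estimate and the upper semicontinuity already proven in Lemma~\ref{Lem4.2}.
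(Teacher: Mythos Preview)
Your proposal is correct and follows essentially the same route as the paper: both directions of (1) via $\epsilon$-regularity and the density characterization of $\Sigma$, and (2) via the uniform upper $(n-4)$-density bound from monotonicity together with Lemma~\ref{Lem4.2}(4) for the lower bound. One small wording issue: in the reverse inclusion of (1) you assert that $A_\infty$ is \emph{smooth} near $x$ from $x\notin\Sing(A_\infty)$, which is a pointwise density condition and not obviously open; but you don't actually need smoothness---$\nu\equiv 0$ near $x$ already gives $\mu=|F_{A_\infty}|^2\,dV$ there, and then $r^{4-n}\mu(B_r(x))\to 0$ follows directly from the defining condition $x\notin\Sing(A_\infty)$, exactly as the paper argues.
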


\begin{proof}
For (1), suppose $x\notin \Sigma$, we know $\Theta(\mu, x) < \epsilon_0^2$. By $\epsilon$-regularity, $A_i$ converges smoothly near $x$ which implies $\nu=0$ near $x$ and $A_\infty$ is smooth near $x$. Suppose $x\in \Sigma$, if $x\notin Supp(\nu)$, then 
$$
\lim_{r\rightarrow 0}r^{4-n} \int_{B_r(x)}|F_{A_\infty}|^2=\Theta(\mu, x)\geq \epsilon_0^2.
$$
    i.e.\ $x\in \Sing(A_\infty)$. For (2), by Theorem \ref{Monotonicity} we know 
    that
$$
 r^{4-n}\mu(B_r(x)) \leq \delta_0^{4-n} \mu(B_{\delta_0}(x))
$$
which implies $\mu$ is absolutely continuous with respect to the $(n-4)$-Hausdorff measure. In particular, we have 
$$
\mu|_{\Sigma}=\Theta(x) \mathcal{H}^{n-4}_{\Sigma}. 
$$
for some measurable function $\Theta(x)$. Since 
$$
\lim_{r\rightarrow 0} r^{4-n}\int_{B_x(r)} |F_{A_\infty}|^2\dVol=0
$$ for $\mathcal{H}^{n-4}$ a.e. $x\in \Sigma$, we know 
$$
\nu(x)=\Theta(x) \mathcal{H}^{n-4}_{\Sigma}
$$
for $\mathcal{H}^{n-4}$ a.e. $x\in \Sigma$. The conclusion follows from the density estimate above and the classical fact that 
$$
2^{4-n}\leq \limsup_{r\rightarrow 0} \frac{\Vol_{\H^{n-4}}(\Sigma \cap B_r(x))}{r^{n-4}} \leq 1
$$
for $\H^{n-4}$ a.e. $x\in \Sigma$.
\end{proof}

\subsection{Tangent cone measures}
Fix  $x_0\in B$, define
$$
\tau_\lambda : B_{\delta_0}(x_0)\to B_{\delta_0}(x_0) : x_0+\xi\mapsto
x+\lambda\xi
$$
For $E\subset B_{\delta_0}(x_0)$ measurable, let
$$
\mu_\lambda(E)=\lambda^{4-n}\mu(\tau_\lambda(E))
$$
In this section we prove the following (cf.\ \cite[Lemma 3.2.1]{Tian:00})
\begin{prop}\label{Tangent cone measure}
    For any $\lambda_j\downarrow 0$ there is a Radon measure $\eta$ such
    that (after passing to a subsequence) $\mu_{\lambda_j}\to \eta$ weakly.
    Moreover, $\eta$ is a \emph{cone measure}, in the sense that
    $$
    \lambda^{4-n}\eta(\lambda E)=\eta(E)
    $$
    for any $\lambda>0$ and $E\subset B_{\delta_0}(x_0)$
    measurable.
\end{prop}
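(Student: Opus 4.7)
\emph{Proof plan.} My plan is to adapt the standard blow-up argument from \cite[Lem.\ 3.2.1]{Tian:00}, with Theorem \ref{Monotonicity} as the central ingredient. In the blow-up limit $\lambda\to 0$ the exponential weight $e^{ar}$ becomes $e^{a\lambda r}\to 1$, so the rescaled problem inherits the clean scale-invariant monotonicity of pure Yang--Mills.

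First I would establish existence of a weak limit. By Lemma \ref{Lem4.2}, $e^{ar}r^{4-n}\mu(B_r(x_0))$ is nondecreasing in $r$, so for any fixed $R>0$ and all sufficiently small $\lambda$,
\begin{equation*}
R^{4-n}\mu_\lambda(B_R(x_0)) = (\lambda R)^{4-n}\mu(B_{\lambda R}(x_0)) \leq e^{ar_0}r_0^{4-n}\mu(B_{r_0}(x_0))\ .
\end{equation*}
Hence $\{\mu_{\lambda_j}\}$ has uniformly bounded mass on every compact subset of $B_{\delta_0}(x_0)$ (and, in the rescaled picture, of $\R^n$). Sequential weak-$\ast$ compactness of Radon measures then produces a subsequence $\mu_{\lambda_j}\rightharpoonup \eta$.

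Next I would pin down the mass of $\eta$ on balls centered at $x_0$. For a.e.\ $R>0$ we have $\eta(\partial B_R(x_0))=0$, and so
\begin{equation*}
R^{4-n}\eta(B_R(x_0)) = \lim_j (\lambda_j R)^{4-n}\mu(B_{\lambda_j R}(x_0)) = \Theta^{n-4}(\mu, x_0)\ ,
\end{equation*}
by Lemma \ref{Lem4.2} and $\lambda_j R\to 0$. Monotonicity of $R\mapsto R^{4-n}\eta(B_R(x_0))$ (inherited from Theorem \ref{Monotonicity} in the limit) upgrades this to equality for every $R>0$, so $R^{4-n}\eta(B_R(x_0))$ is independent of $R$.

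Finally, to obtain the full cone property I would exploit the defect term in the monotonicity formula. After a further subsequential extraction, the angular curvature densities $|\iota_{\partial_r}F_{A_i}|^2\,dV$ converge weakly to a Radon measure $\sigma$, and passing to the limit in Theorem \ref{Monotonicity} yields
\begin{equation*}
\int_{B_{r_2}(x_0)\setminus B_{r_1}(x_0)} r^{4-n}e^{ar}\,d\sigma \;\leq\; e^{ar_2}r_2^{4-n}\mu(B_{r_2}(x_0)) - e^{ar_1}r_1^{4-n}\mu(B_{r_1}(x_0))\ .
\end{equation*}
Substituting $r_i=\lambda R_i$ and using the scaling identity $\sigma_\lambda:=\lambda^{4-n}\tau_\lambda^*\sigma$, this rescales to
\begin{equation*}
\int_{B_{R_2}(x_0)\setminus B_{R_1}(x_0)} s^{4-n}e^{a\lambda s}\,d\sigma_\lambda \;\leq\; e^{a\lambda R_2}R_2^{4-n}\mu_\lambda(B_{R_2}(x_0)) - e^{a\lambda R_1}R_1^{4-n}\mu_\lambda(B_{R_1}(x_0))\ .
\end{equation*}
As $\lambda\to 0$ the right-hand side converges to $R_2^{4-n}\eta(B_{R_2}(x_0))-R_1^{4-n}\eta(B_{R_1}(x_0))=0$ by the constancy established in step two, so any weak subsequential limit $\sigma_\infty$ of $\sigma_\lambda$ must vanish on every bounded annulus away from $x_0$. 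This is the assertion that the rescaled ``angular'' curvature measure is trivial in the limit, and combined with the constancy of $R^{4-n}\eta(B_R(x_0))$ it forces $\eta$ to be dilation-invariant, $\lambda^{4-n}\eta(\tau_\lambda E)=\eta(E)$.

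The main obstacle is this last step: converting the scalar vanishing $\sigma_\infty\equiv 0$ on annuli into full dilation invariance of $\eta$ as a measure, since $\eta$ is not the curvature of a connection but only the weak limit of curvature densities (including a possible defect $\nu$ supported on $\Sigma$). This is the standard GMT maneuver underlying Tian's argument: disintegrating $\eta$ in polar coordinates $(r,\omega)\in(0,\infty)\times S^{n-1}$, the vanishing of the radial-derivative measure pins down the radial density to be proportional to $r^{n-5}$, which is equivalent to the cone property. The argument of \cite[Lem.\ 3.2.1]{Tian:00} carries through verbatim once Theorem \ref{Monotonicity} is in hand.
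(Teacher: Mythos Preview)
Your first two steps are fine, and indeed the paper's proof also begins with weak compactness and the constancy $R^{4-n}\eta(B_R(x_0))=\Theta^{n-4}(\mu,x_0)$. The problem is Step~3.

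The measure $\sigma$ you introduce is the weak limit, as $i\to\infty$, of $|\iota_{\partial_r}F_{A_i}|^2\,dV$ at the \emph{original} scale. Rescaling this fixed measure and observing $\sigma_\lambda\to 0$ on annuli carries no information about $\eta$ beyond what you already extracted in Step~2: the only link between $\sigma$ and $\mu$ is the monotonicity inequality on balls centered at $x_0$, and that was fully spent to get constancy of $R^{4-n}\eta(B_R(x_0))$. But constancy of the density at the center alone does \emph{not} imply the cone property. (For instance, glue the $(n-4)$-Hausdorff measure on one plane through $x_0$ inside $B_1$ to the measure on a different plane outside $B_1$; the density is constant but the result is not a cone.) The cone property is equivalent to constancy of $R^{4-n}\int_{B_R(x_0)}\phi\,d\eta$ for \emph{every} radially invariant $\phi$, and your $\sigma_\infty=0$ gives no handle on these integrals.

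Nor is the ``polar disintegration'' you sketch how Tian's argument actually runs. What is needed---and what the paper does---is a \emph{diagonalization}: pick $A_{i,\lambda_i}:=\tau_{\lambda_i}^\ast A_i$ so that $|F_{A_{i,\lambda_i}}|^2\,dV_{\lambda_i}\rightharpoonup\eta$. Now you have actual connections approximating $\eta$, and you can apply the first-variation identity (the inequality \eqref{eqn1.2} in the proof of Theorem~\ref{Monotonicity}) with a general test vector field $X=\psi(r/s)\,r\partial_r$, or equivalently test against an arbitrary radially invariant $\phi$. The radial term $\int r^{4-n}|\iota_{\partial_r}F_{A_{i,\lambda_i}}|^2$ for the \emph{diagonal} sequence tends to zero by monotonicity (the rescaled constant $a\lambda_i\to 0$), and integrating by parts using the $\Omega$-YM equation then forces
\[
\sigma^{4-n}\int_{B_\sigma(x_0)}\phi\,d\eta=\rho^{4-n}\int_{B_\rho(x_0)}\phi\,d\eta
\]
for all radially invariant $\phi\geq 0$, which is the cone property. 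The missing ingredient in your outline is precisely this return to connections via diagonalization so that the PDE can be used with non-constant angular test functions.
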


\begin{proof}
Let $ds^2_\lambda=\lambda^{-2}\tau_\lambda^\ast ds^2$ be the pull-back
metric and $dV_\lambda$ the associated volume form. Similarly, let
$A_{i,\lambda}=\tau_\lambda^\ast A_i$. We also pull back the hermitian
structure. Then:
$$
    F_{A_{i,\lambda}}=\tau_\lambda^\ast F_{A_i}\quad ;\quad 
|F_{A_{i,\lambda}}|^2(x)=\lambda^4|F_{A_i}|^2(\tau_\lambda(x))
$$
    The weak convergence of $\mu_{\lambda_i}\to \eta$, for some
    Radon measure $\eta$,  follows from
    the monotonicity.
Notice that since
$$
    \sigma^{4-n}\mu(B_\sigma(x_0))\leq \rho^{4-n}\mu(B_\rho(x_0))
$$
we have
$$
    \sigma^{4-n}\eta(B_\sigma(x_0))=\Theta(\mu, x_0)
$$
We wish to show $\eta$ is a cone measure.
For this it suffices to show that for any radially invariant function
    $\phi\geq 0$,
    \begin{equation} \label{eqn:sigma-rho}
        \sigma^{4-n}   \int_{B_\sigma(x)}\phi\, d\eta=
        \rho^{4-n}   \int_{B_\rho(x)}\phi\, d\eta
    \end{equation}
    for all $\sigma$, $\rho$ (cf.\ \cite{Tian:00}, top of p.\ 225).
    By  a diagonalization argument we may assume
    $$
    |F_{A_{i,\lambda}}|^2\, dV_{\lambda_i}\lra \eta
    $$
    weakly.
    To prove \eqref{eqn:sigma-rho}, note that 
    \begin{align}
        \sigma^{4-n}   \int_{B_\sigma(x)}\phi|F_{A_{i,\lambda_i}}|^2\,
        & dV_{\lambda_i}-
        \rho^{4-n}   \int_{B_\rho(x)}\phi|F_{A_{i,\lambda_i}}|^2\,
        dV_{\lambda_i}\notag\\
        &=\int_\sigma^\rho ds\, \frac{d}{ds}\left\{ s^{4-n}\int_{B_s(x)}\phi|F_{A_{i,\lambda_i}}|^2\,
        dV_{\lambda_i} \right\}\notag \\
        &=\int_\sigma^\rho ds\, 
        \frac{d}{ds}\left\{ s^{4-n}\int_{B_1(x)}\phi|F_{\tau_s^\ast A_{i,\lambda_i}}|^2\,
        \tau_s^\ast dV_{\lambda_i} \right\} 
        \label{eqn:phi-integral}
    \end{align}
    Now $s^{4-n}\tau_s^\ast dV_{\lambda_i}=(1+O(s^2\lambda_i))dV_0$, so 
    $$
    \frac{d}{ds}(s^{4-n}\tau_s^\ast dV_{\lambda_i})\lra 0
    $$
    uniformly as $\lambda_i\to 0$.
    Since $F_{A_i}$ has uniformly bounded $L^2$-norm, this term vanishes.
    It suffices to estimate the term coming from 
    $$
    \frac{d}{ds}F_{\tau_s^\ast A_{i,\lambda_i}}=d_{\tau_s^\ast
    A_{i,\lambda_i}}\partial_s(\tau_s^\ast
    A_{i,\lambda_i})
    $$
    At this point we can assume $A_{i,\lambda_i}$ is in radial gauge,
    i.e.\ $\imath_{\partial_r}A_{i,\lambda_i}=0$. Then
    $$
    \imath_{\partial_r}F_{A_{i,\lambda_i}}=\partial_r A_{i,\lambda_i}
    $$
and so
    $$\partial_s(\tau_s^\ast A_{i,\lambda_i})=
   r \imath_{\partial_r}F_{\tau_s^\ast A_{i,\lambda_i}}
    $$
    It follows that
$$
   \frac{d}{ds}(\phi|F_{\tau_s^\ast A_{i,\lambda_i}}|^2)
   =2\langle d_{\tau_s^\ast A_{i,\lambda_i}}(
   r \imath_{\partial_r}F_{\tau_s^\ast A_{i,\lambda_i}}
   ),\phi\,  F_{\tau_s^\ast A_{i,\lambda_i}}\rangle
$$
Integrating by parts, we see that 
\eqref{eqn:phi-integral} is bounded by a constant times the integral of 
$$r^{4-n}| \imath_{\partial_r}F_{ A_{i,\lambda_i}}|
   | F_{ A_{i,\lambda_i}}|
   $$
over $B_\rho(x)$, where the constant depends on $\phi$, $d\phi$, and
$d\Omega$. By Theorem \ref{Monotonicity} we have 
$$
\int_{B_\rho(x)}r^{4-n}| \imath_{\partial_r}F_{ A_{i,\lambda_i}}|^2
dV_{\lambda_i}\lra 0
$$
and so the result follows.

\end{proof}

\begin{rmk}
    An alternative argument follows
    \cite[Lemma 4.1.4]{LinWang:08}. In order to show $\eta$ is a cone
    measure, it suffices to show that for any compactly supported function $\psi$ over $B$ we have 
$$
\frac{d}{ds} (s^{4-n}(\tau_s^*\eta)(\psi))=0.
$$ 
To prove this, note that
$$
\begin{aligned}
\frac{d}{ds} (s^{4-n}(\tau_s^*\eta)(\psi))&=\frac{d}{ds} (s^{4-n}\int_{\R^n} \psi_s d\eta)\\
&=-s^{3-n} \int_{\R^n} ((n-4) \psi_s+s^{-1}x\cdot (\nabla \psi)_s)dV\\
\end{aligned}
$$
where $\psi_s(x)=\psi(x/s)$ 
    and $(\nabla \psi)_s(x)=(\nabla \psi)(x/s)$. So it suffices to show that 
$$
\int_{\R^n} ((n-4) \psi_s+s^{-1}x\cdot (\nabla \psi)_s)dV=0.
$$
From the proof of Theorem \ref{Monotonicity}, we have
$$
\begin{aligned}
&\left|\int_{M} |F_A|^2 (x\cdot \nabla \psi + (n-4) \psi +\psi
    O(r^2))dV\right|\\ \leq
    &\left|-2\sup |d\Omega| \int_{M} \psi r |\iota_{\partial_r} F_A||F_A|dV+
    \int_M 4\psi'r |\iota_{\partial_r} F_A|^2dV+\int_{M} O(r^2)\psi
    |F_A|^2dV\right|.
\end{aligned}
$$
for any $\Omega$-YM  connection $A$ over $(M, g)$ and compactly 
    supported function
    $\psi$. We plug in $(A, \psi)=(A_{i,\lambda_i}, \psi_s)$ and get  
$$
\begin{aligned}
&\left|\int_{\R^n} |F_{A_{i,\lambda_i}}|^2 (s^{-1} x \cdot (\nabla \psi)_s+
    (n-4) \psi_s +\psi_s O(r^2))dV\right|\\
\leq&\left|-2\sup |d\Omega_i| \int_{\R^n} \psi_s r |\iota_{\partial_r}
    F_{A_i^\lambda}||F_{A_i^\lambda}|dV+
    \int_M 4\psi_s'r |\iota_{\partial_r} F_{A_i^\lambda}|^2dV+\int_{\R^n}
    O(r^2)\psi_s |F_{A_i^\lambda}|^2dV\right|
\end{aligned}
$$
By taking limits the right hand side vanishes, and this gives 
$$
\int_{\R^n} ((n-4) \psi_s+s^{-1} x \cdot (\nabla \psi)_s )d\eta=0.
$$
Here, since the base metric converges smoothly to the flat metric on
    $\R^n$, the $O(r^2)$ term vanishes in the limit.
\end{rmk}

Now we fix a tangent measure $\eta$. Define 
$$
L_{\eta}:=\{x\in \R^n: \Theta^{n-4}(\eta, x)=\Theta^{n-4}(\eta, 0)=\Theta^{n-4}(\mu, x_0)\}.
$$
The following can be deduced from the monotonicity formula and the
dimension reduction argument of Federer (cf.\ \cite[p.\ 27]{LinWang:08}).
\begin{lem}
For any $y\in L_{\eta}$, $\eta$ is invariant in the direction of $y$. In particular, $L_\eta$ is a linear subspace of $\R^n$. Furthermore, $\dim L_\eta \leq n-4.$
\end{lem}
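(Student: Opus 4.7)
The plan is to prove the three assertions in sequence: translation invariance of $\eta$ along $L_\eta$, linearity of $L_\eta$, and the dimension bound. Throughout, the blow-up is taken over flat $\R^n$, so Theorem \ref{Monotonicity} passes to the limit as the (clean) statement that $\rho\mapsto \rho^{4-n}\eta(B_\rho(x))$ is non-decreasing for every $x$.

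\emph{Translation invariance along $L_\eta$.} Fix $y\in L_\eta$. Since $\eta$ is a cone at $0$ by Proposition \ref{Tangent cone measure}, $\rho^{4-n}\eta(B_\rho(0))\equiv \Theta^{n-4}(\eta,0)$. The nestings $B_{\rho-|y|}(0)\subset B_\rho(y)\subset B_{\rho+|y|}(0)$ for $\rho>|y|$ force the $\rho\to\infty$ limit of $\rho^{4-n}\eta(B_\rho(y))$ to also equal $\Theta^{n-4}(\eta,0)$, while the $\rho\to 0^+$ limit is $\Theta^{n-4}(\eta,y)=\Theta^{n-4}(\eta,0)$ by the definition of $L_\eta$. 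A non-decreasing function with equal limits at both ends is constant, and the equality case of the monotonicity formula (via the non-negative error term involving the radial component $\iota_{\partial_r}F_{A_i}$, passed to the limit as a non-negative Radon measure) upgrades this to the statement that $\eta$ is a cone at $y$ as well. Setting $T_\lambda^p(x)=p+\lambda(x-p)$, we have both $(T_\lambda^0)_\ast\eta=\lambda^{4-n}\eta$ and $(T_\lambda^y)_\ast\eta=\lambda^{4-n}\eta$, so the composition $T_\lambda^y\circ T_{\lambda^{-1}}^0(x)=x+(1-\lambda)y$ preserves $\eta$ for every $\lambda>0$. As $\lambda$ ranges over $(0,\infty)$, the translation $(1-\lambda)y$ sweeps out all of $\R y$, giving translation invariance of $\eta$ along $\R y$.

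\emph{Linearity and dimension bound.} Let $V$ be the set of translation-invariance directions of $\eta$; this is automatically a closed linear subspace of $\R^n$. The previous step gives $L_\eta\subset V$, and translation invariance preserves density, so $y\in V$ forces $\Theta^{n-4}(\eta,y)=\Theta^{n-4}(\eta,0)$ and hence $V\subset L_\eta$. Therefore $L_\eta=V$ is a linear subspace. For the dimension bound, set $k=\dim L_\eta$ and split $\R^n=L_\eta\oplus L_\eta^\perp$ orthogonally. Translation invariance along $L_\eta$ yields a disintegration $\eta=\H^k|_{L_\eta}\otimes \eta'$ for some Radon measure $\eta'$ on $L_\eta^\perp$, and the cone property of $\eta$ at $0$ then forces the scaling law $\eta'(\lambda W)=\lambda^{n-4-k}\eta'(W)$ for all Borel $W\subset L_\eta^\perp$. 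Because $\Theta^{n-4}(\eta,0)\geq \epsilon_0^2>0$ (the case of interest being $x_0\in\Sigma$), $\eta'$ is nonzero, so some compact $W$ has $\eta'(W)>0$. If $n-4-k<0$, letting $\lambda\to 0^+$ gives $\eta'(\lambda W)\to\infty$ with all $\lambda W$ inside a fixed compact set, contradicting the Radon property. Hence $k\leq n-4$.

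\emph{Main obstacle.} The delicate step is upgrading constancy of $\rho^{4-n}\eta(B_\rho(y))$ to true conical invariance of $\eta$ at $y$. This uses the explicit form of the error term in Theorem \ref{Monotonicity}, namely the integral of $r^{4-n}|\iota_{\partial_r}F_{A}|^2$, which at the level of the approximating sequence $A_i$ passes to the limit as a non-negative Radon measure whose vanishing on every annulus centered at $y$ forces both $\iota_{\partial_r}F_{A_\infty}\equiv 0$ and dilation invariance of the defect measure $\nu$ at $y$. The $\Omega$-dependent corrections enter only through the tame exponential $e^{ar}$ and the $d\Omega$ perturbation, both negligible in the blow-up limit, so the argument reduces to the Yang-Mills case treated in \cite{Tian:00, Lin:99}.
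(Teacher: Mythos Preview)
Your proof is correct and follows precisely the Federer dimension reduction argument via the monotonicity formula that the paper invokes by citation to \cite[p.\ 27]{LinWang:08}. The paper gives no details of its own, so your write-up is simply an explicit version of the cited approach; in particular, your identification of the key step---upgrading constancy of $\rho^{4-n}\eta(B_\rho(y))$ to conical invariance of $\eta$ at $y$ by passing the radial error term of Theorem \ref{Monotonicity} through the diagonal approximating sequence---is exactly how it is done in \cite{Tian:00, Lin:99, LinWang:08}, and mirrors the paper's own proof of Proposition \ref{Tangent cone measure}.

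Two minor remarks. First, the claim that the set $V$ of translation-invariance directions is ``automatically'' a linear subspace relies on $\eta$ being a cone at $0$: closure under addition is immediate, but closure under scalar multiplication uses $T_\lambda^0\circ\tau_v=\tau_{\lambda v}\circ T_\lambda^0$ together with $(T_\lambda^0)_*\eta=\lambda^{4-n}\eta$; you have this ingredient in hand, so the claim is justified. Second, your dimension bound via the product disintegration and the scaling exponent $n-4-k$ is a clean alternative to the more common argument that $L_\eta\subset\{x:\Theta^{n-4}(\eta,x)\geq\epsilon_0^2\}$ combined with the finite $(n-4)$-Hausdorff measure bound; both work, and you correctly flag the implicit hypothesis $\Theta^{n-4}(\eta,0)>0$, which in the paper's context follows from $x_0\in\Sigma$.
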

Define 
$$
\Sigma_j:=\{x\in \Sigma: \dim L_{\eta} \leq j \text{ for all the tangent measures } \eta \text{ at } x\}.
$$
Then we have 
\begin{prop}
There exists a filtration which consists of closed subsets
$$\Sigma_0 \subset \Sigma_1 \subset \cdots \subset
 \Sigma_{n-4}=\Sigma$$ 
 with the Hausdorff dimension satisfying $\dim(\Sigma_j)\leq j$.
\end{prop}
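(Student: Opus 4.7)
The plan is to establish three things: the chain of inclusions, the closedness of each $\Sigma_j$, and the Hausdorff dimension estimate. The inclusions $\Sigma_0 \subset \Sigma_1 \subset \cdots$ are immediate from the definition, and $\Sigma_{n-4} = \Sigma$ follows from the preceding lemma which gives $\dim L_\eta \leq n-4$ for every tangent measure $\eta$. The content is therefore the closedness of each $\Sigma_j$ and the bound $\dim_{\mathcal{H}}(\Sigma_j) \leq j$.

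For closedness, I would argue contrapositively. Suppose $x_k \in \Sigma_j$ with $x_k \to x_\infty$, and suppose for contradiction that $x_\infty \notin \Sigma_j$, so that some tangent measure $\eta$ at $x_\infty$ has $\dim L_\eta \geq j+1$. The cone property of $\eta$ together with its $(j+1)$-dimensional translation invariance means that at arbitrarily small scales $\lambda_i$ around $x_\infty$ the rescaled measure $\mu_{x_\infty,\lambda_i}$ looks like $\eta$, and in particular has many points of density close to $\Theta^{n-4}(\mu,x_\infty)$ distributed along a $(j+1)$-dimensional approximate subspace. By re-centering the blow-up at $x_k$ for suitable $k = k(i)$ with $|x_k - x_\infty|/\lambda_i \to 0$ and passing to a subsequence, one extracts a tangent measure $\eta'$ at some $x_k$ that still carries translation invariance by a $(j+1)$-dimensional subspace, using Proposition \ref{Tangent cone measure} to produce the tangent measure and the upper semi-continuity of $\Theta^{n-4}$ from Lemma \ref{Lem4.2} to transfer the symmetry. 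This contradicts $x_k \in \Sigma_j$.

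For the dimension bound I would apply Federer's dimension reduction technique. Suppose for contradiction that $\mathcal{H}^{j+\alpha}(\Sigma_j) > 0$ for some $\alpha > 0$. Then a standard density result yields a point $x_0 \in \Sigma_j$ at which
\[
\theta^* := \limsup_{r \to 0^+} \frac{\mathcal{H}^{j+\alpha}(\Sigma_j \cap B_r(x_0))}{r^{j+\alpha}} > 0.
\]
Choose $\lambda_i \downarrow 0$ realizing this upper density and pass to a subsequence so that $\mu_{\lambda_i}$ converges weakly to a tangent cone measure $\eta$ at $x_0$ (via Proposition \ref{Tangent cone measure}), while the rescaled sets $\lambda_i^{-1}(\Sigma_j - x_0) \cap B_1(0)$ Hausdorff-converge to a compact set $T$ with $\mathcal{H}^{j+\alpha}(T) \geq \theta^* > 0$. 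The upper semi-continuity of $\Theta^{n-4}$ under the blow-up forces every $y \in T$ to satisfy $\Theta^{n-4}(\eta,y) \geq \Theta^{n-4}(\mu,x_0)$, so by the identification of the maximal-density locus of $\eta$ with $L_\eta$ we conclude $T \subset L_\eta$. Therefore $\mathcal{H}^{j+\alpha}(L_\eta) > 0$, which forces $\dim L_\eta \geq j+\alpha > j$, contradicting $x_0 \in \Sigma_j$ since $\eta$ is a tangent measure at $x_0$.

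The main obstacle is the closedness statement: interleaving the blow-up scale with the approximating sequence $x_k \to x_\infty$ so that a symmetry of a tangent cone at $x_\infty$ can be recovered as a symmetry of a tangent cone at some $x_k$ requires careful use of both the monotonicity formula (Theorem \ref{Monotonicity}) and the upper semi-continuity in Lemma \ref{Lem4.2}. The Federer reduction step, while standard, also depends on the clean characterization of the maximal-density locus of a cone measure as its translation-invariance subspace $L_\eta$, which itself uses the monotonicity formula together with the cone property established in Proposition \ref{Tangent cone measure}.
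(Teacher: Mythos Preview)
The paper gives no detailed proof here; the proposition is stated as a direct consequence of Federer's dimension reduction (with a reference to Lin--Wang), so the comparison is against that standard argument.

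Your dimension estimate contains a genuine gap. You assert that upper semi-continuity of $\Theta^{n-4}$ forces $\Theta^{n-4}(\eta,y)\geq\Theta^{n-4}(\mu,x_0)$ for every $y\in T$, and hence $T\subset L_\eta$. But upper semi-continuity only gives $\limsup_i\Theta^{n-4}(\mu,x_i)\leq\Theta^{n-4}(\mu,x_0)$, which is the wrong direction; what actually survives the blow-up is the lower bound $\Theta^{n-4}(\eta,y)\geq\liminf_i\Theta^{n-4}(\mu,x_i)$, and there is no reason this $\liminf$ should reach $\Theta^{n-4}(\mu,x_0)$. The classical Federer reduction does not attempt $T\subset L_\eta$ in one step. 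Instead it uses that iterated tangent measures are again tangent measures (a diagonal argument): since $x_0\in\Sigma_j$, every tangent measure of $\eta$ at any point of its support is a tangent measure of $\mu$ at $x_0$ and hence has spine of dimension $\leq j$, so $T\subset\Sigma_j(\eta)$. One then picks a density point of $\Sigma_j(\eta)$ off the spine $L_\eta$, blows up again to gain an extra direction of translation invariance, and iterates until the spine exceeds dimension $j$, a contradiction.

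Your closedness argument also fails as written: after re-centering you are taking a limit of rescalings $\mu_{x_{k(i)},\lambda_i}$ in which \emph{both} the center $x_{k(i)}$ and the scale $\lambda_i$ vary with $i$. The resulting limit is not a tangent measure at any fixed $x_k$, so it cannot contradict $x_k\in\Sigma_j$. In the analogous stratifications for harmonic maps and stationary varifolds, closedness of the strata is in fact not a formal consequence of monotonicity and the cone property alone; the substantive content of the proposition is the Hausdorff dimension bound, and that is where you should focus the argument.
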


\subsection{Results parallel to stationary harmonic maps and Yang-Mills connections}
The following \emph{geometric lemma} can be obtained by directly replacing
the energy density associated to the harmonic map with $\Theta^{n-4}$ in
\cite{LinWang:08} or the Yang-Mills case in \cite{Tian:00}
\begin{lem}
Suppose $\Theta^{n-4}(\mu, \cdot)$ is $\mathcal{H}^{n-4}$ approximately
    continuous at $x\in \Sigma$. For any $0<r\ll 1$, there exists $n-4$ points $x^r_1, \cdots x^r_{n-4}$ with
\begin{itemize}
\item $\Theta^{n-4}(\mu, x_i^r) \geq \Theta^{n-4}(\mu, x)-\epsilon_r$ where $\epsilon_r \rightarrow 0$ as $r\rightarrow 0$;
\item $d(x_1, x) \geq r s$ and $d(x_i, x+\text{span}\{x_1-x\cdots, x_{n-4}-x\}) \geq r s$ for some $s\in (0,1)$ independent of $r$.
\end{itemize}
\end{lem}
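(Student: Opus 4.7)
The plan is to construct the points $x_1^r,\dots,x_{n-4}^r$ by induction on dimension, playing an upper Hausdorff density bound near a low-dimensional affine subspace against a lower bound obtained from approximate continuity. The strategy mirrors the standard proofs for stationary harmonic maps in \cite{LinWang:08} and for Yang-Mills connections in \cite{Tian:00}, and the only $\Omega$-YM specific inputs are the monotonicity of Theorem \ref{Monotonicity} and the $\varepsilon$-regularity of Theorem \ref{thm:regularity}, both already established.

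First, using the $\mathcal{H}^{n-4}$ approximate continuity of $\Theta^{n-4}(\mu,\cdot)$ at $x$, together with the density lower bound $\epsilon_0^2 \leq \Theta^{n-4}(\mu,\cdot)$ on $\Sigma$, I choose $\epsilon_r \to 0$ slowly enough that the set
$$
E_r := \{y \in \Sigma \cap B_r(x) : \Theta^{n-4}(\mu, y) \geq \Theta^{n-4}(\mu, x) - \epsilon_r\}
$$
satisfies $\mathcal{H}^{n-4}(E_r) \geq c\, r^{n-4}$ for some $c>0$ independent of $r$. Indeed, approximate continuity yields $r^{4-n}\mathcal{H}^{n-4}(\Sigma\cap B_r(x) \setminus E_r) \to 0$, while the classical lower density bound recalled earlier gives $r^{4-n}\mathcal{H}^{n-4}(\Sigma\cap B_r(x)) \geq 2^{4-n}$ for small $r$.

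Second, I record the complementary upper bound: for any affine subspace $V \ni x$ of dimension $k\leq n-5$ and any $s\in(0,1)$,
$$
\mathcal{H}^{n-4}(\Sigma \cap N_{rs}(V) \cap B_r(x)) \leq C_n\, r^{n-4}\, s^{n-4-k},
$$
where $N_{rs}(V)$ denotes the $rs$-neighborhood of $V$. This follows by covering $N_{rs}(V)\cap B_r(x)$ by at most $C_n s^{-k}$ balls of radius $rs$ centered on $V$ and invoking the upper density bound $\mathcal{H}^{n-4}(\Sigma\cap B_\rho(y))\leq C_n \rho^{n-4}$, itself a direct consequence of the monotonicity formula and the uniform $L^2$ curvature bound.

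Third, fix $s\in(0,1)$ small enough that $C_n s^{n-4-k} < c/2$ for every $k\in\{0,1,\dots,n-5\}$. Set $V_0=\{x\}$, and given $x_1^r,\dots,x_k^r\in E_r$ with $k\leq n-5$, let $V_k = x+\operatorname{span}\{x_1^r-x,\dots,x_k^r-x\}$. Since
$$
\mathcal{H}^{n-4}(\Sigma \cap N_{rs}(V_k) \cap B_r(x)) \leq C_n\, r^{n-4}\, s^{n-4-k} < \tfrac{c}{2}\, r^{n-4} < \mathcal{H}^{n-4}(E_r),
$$
there exists $x_{k+1}^r \in E_r \setminus N_{rs}(V_k)$, which by construction satisfies $d(x_{k+1}^r, V_k) \geq rs$ together with the required density bound. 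Iterating $n-4$ times produces the desired points. The main technical point, and the one to handle with care, is calibrating the rate at which $\epsilon_r \to 0$ so that the $E_r$ lower bound persists while $\epsilon_r$ still tends to $0$; once that is in hand, the geometric upper/lower density competition drives the induction mechanically.
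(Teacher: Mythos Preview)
Your proposal is correct and follows precisely the route the paper indicates: the paper does not give a self-contained proof of this lemma but instead observes that the standard argument for harmonic maps in \cite{LinWang:08} and for Yang-Mills in \cite{Tian:00} carries over verbatim once one replaces the energy density by $\Theta^{n-4}(\mu,\cdot)$, with the only $\Omega$-YM inputs being the monotonicity formula (Theorem \ref{Monotonicity}) and $\varepsilon$-regularity (Theorem \ref{thm:regularity}). Your inductive construction via the competition between the upper density bound on $\Sigma$ near low-dimensional affine subspaces and the lower bound on $\mathcal{H}^{n-4}(E_r)$ from approximate continuity is exactly that standard argument.
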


Given the geometric lemma, we have the existence of \emph{weak tangent planes} as follows
\begin{prop}
For any point $x\in \Sigma$ and any $\delta>0$, there exists $r_x>0$ and a
    tangent plane $L\in \Gr(\mathbb{R}^n, n-4)$ so that $\mu(B_r(x) \setminus L_{\delta r})=0$ 
where $L_{\delta r}$ denotes the $\delta r$ neighborhood  of $L$ in $\mathbb R^n$.
\end{prop}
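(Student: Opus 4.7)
The plan is to implement the Federer--Almgren dimension reduction scheme, adapted in the vein of Lin--Wang \cite{LinWang:08} for harmonic maps and Tian \cite{Tian:00} for Yang--Mills: produce a tangent cone measure $\eta$ at $x$, use the geometric lemma to force its translation-invariance subspace $L_\eta$ to have the maximal dimension $n-4$, and then show by iterated blow-up that $\eta$ itself is supported on $L_\eta$. The weak tangent plane is then $L=L_\eta$, and the conclusion is transferred back to finite scale using the uniform density lower bound $\Theta^{n-4}(\mu,\cdot)\geq\epsilon_0^2$ on $\Sigma$.

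I would work at a point $x$ of $\mathcal{H}^{n-4}$-approximate continuity of $\Theta^{n-4}(\mu,\cdot)$, which by Lemma \ref{Lem4.2} holds at $\mathcal{H}^{n-4}$-a.e.\ point of $\Sigma$. Pick $\lambda_j\downarrow 0$ and extract a tangent cone measure $\eta$ via Proposition \ref{Tangent cone measure}, so $\mu_{\lambda_j}\rightharpoonup\eta$. Apply the geometric lemma at $x$ with radius $\lambda_j$ to produce points $x_1^{\lambda_j},\dots,x_{n-4}^{\lambda_j}$ whose densities approach $\Theta^{n-4}(\mu,x)$ and which are quantitatively independent at scale $\lambda_j$. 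Pulling these back by $\tau_{\lambda_j}^{-1}$ and extracting a subsequential limit, we obtain points $y_1,\dots,y_{n-4}\in B_1(0)$ which, by the upper semi-continuity of the density under weak convergence together with its maximality at the origin of $\eta$, lie in $L_\eta$ and span a linear subspace of dimension exactly $n-4$. Combined with $\dim L_\eta\leq n-4$, this forces $L_\eta=\mathrm{span}(y_1,\dots,y_{n-4})\in\Gr(\RBbb^n,n-4)$.

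The heart of the argument is showing $\mathrm{supp}(\eta)\subset L_\eta$. Since $\eta$ is a cone measure invariant under translations by $L_\eta$, it descends (via transversal slicing) to a cone measure $\tilde\eta$ on $\RBbb^n/L_\eta\cong\RBbb^4$ with $\Theta^0(\tilde\eta,0)=\Theta^{n-4}(\mu,x)$. If $\mathrm{supp}(\tilde\eta)$ contained a point $z\neq 0$, then a further tangent cone of $\eta$ at any lift of $z$ would enjoy a translation-invariance subspace strictly larger than $L_\eta$, contradicting $\dim L_\eta=n-4$. Hence $\mathrm{supp}(\tilde\eta)=\{0\}$ and $\mathrm{supp}(\eta)\subset L_\eta$.

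Finally, to transfer back to finite scale, set $L=L_\eta$. Weak convergence gives $\mu_{\lambda_j}(B_1(0)\setminus L_\delta)\to 0$. The lower bound $\Theta^{n-4}(\mu,\cdot)\geq\epsilon_0^2$ on $\Sigma$ from Lemma \ref{Lem4.2}(3), combined with the monotonicity formula of Theorem \ref{Monotonicity}, ensures that every $y\in\Sigma\cap(B_{\lambda_j}(x)\setminus L_{\delta\lambda_j})$ contributes at least a fixed multiple of $\epsilon_0^2\lambda_j^{n-4}$ to $\mu_{\lambda_j}(B_1(0)\setminus L_{\delta/2})$; since this total contribution vanishes in the limit, for $j$ large enough $\Sigma\cap B_{\lambda_j}(x)\subset L_{\delta\lambda_j}$, and the vanishing on the regular part follows from the smooth convergence of $A_{j_i}\to A_\infty$ off $\Sigma$ together with $\epsilon$-regularity. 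Setting $r_x=\lambda_j$ for such a $j$ yields the conclusion. The main obstacle, as always in dimension reduction arguments, is the rigorous descent of $\eta$ to the quotient cone measure $\tilde\eta$ and the verification that no nonzero density point of $\tilde\eta$ can exist under the maximality of $\dim L_\eta$.
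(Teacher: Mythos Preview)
Your approach is correct and is precisely the Lin--Wang/Tian scheme the paper invokes; the paper gives no proof beyond citing the geometric lemma, so there is nothing further to compare. Three brief remarks. First, your restriction to points of $\mathcal{H}^{n-4}$-approximate continuity of $\Theta^{n-4}(\mu,\cdot)$ is forced by the hypothesis of the geometric lemma; the statement's ``for any $x\in\Sigma$'' should be read as ``for $\mathcal{H}^{n-4}$-a.e.\ $x\in\Sigma$,'' which is all that is used in the subsequent null-projection argument. Second, your iterated blow-up to show $\mathrm{supp}(\eta)\subset L_\eta$ is valid but heavier than needed: once $\dim L_\eta=n-4$, the quotient measure $\tilde\eta$ on $\RBbb^4\cong L_\eta^\perp$ inherits the scaling $\tilde\eta(\lambda E)=\tilde\eta(E)$ for all $\lambda>0$, so $\tilde\eta(B_r(0))$ is constant in $r$ and hence $\tilde\eta(\RBbb^4\setminus\{0\})=0$ immediately. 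Third, your finite-scale transfer yields $\Sigma\cap B_r(x)\subset L_{\delta r}$, i.e.\ $\nu(B_r(x)\setminus L_{\delta r})=0$; the literal conclusion $\mu(B_r(x)\setminus L_{\delta r})=0$ as printed cannot hold in general, since the absolutely continuous part $|F_{A_\infty}|^2\,dV$ of $\mu$ need not vanish off $L$---this is a misprint for $\nu$.
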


As a corollary, this implies the \emph{null projection} property.
\begin{prop}\label{NullProjection}
Suppose $E \subset \Sigma$ is a purely $(n-4)$-unrectifiable set, then 
$$
\Vol_{\mathcal{H}^{n-4}}(P_V(E))=0
$$
for any orthogonal projections $P_V: \R^n \rightarrow V\in \Gr(\R^n, n-4)$. 
\end{prop}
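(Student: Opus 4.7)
The plan is to exploit the weak tangent plane property established in the preceding proposition, combined with a dichotomy on the incidence of tangent planes with $V^\perp$. After discarding an $\mathcal{H}^{n-4}$-null subset on which $\mathcal H^{n-4}$-approximate continuity of $\Theta^{n-4}(\mu,\cdot)$ fails, we may assume every $x\in E$ admits, for each $\delta>0$, a radius $r_x(\delta)>0$ and a plane $L_x\in \Gr(\mathbb{R}^n,n-4)$ with $\mu\bigl(B_r(x)\setminus (x+L_x)_{\delta r}\bigr)=0$ for $0<r\leq r_x(\delta)$. Since on $\Sigma$ we have $\nu\geq \epsilon_0^2 \mathcal H^{n-4}|_\Sigma$, this concentration of $\mu$ transfers to a containment of $E$ up to $\mathcal H^{n-4}$-null sets, namely $E\cap B_r(x)\subset (x+L_x)_{\delta r}$.

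Partition $E = E_{\mathrm{deg}}\sqcup E_{\mathrm{tr}}$ according to whether $L_x\cap V^\perp\neq \{0\}$ (degenerate) or $L_x\cap V^\perp = \{0\}$ (transverse). For $x\in E_{\mathrm{deg}}$, the image $P_V(x+L_x)$ is an affine subspace of $V$ of dimension at most $n-5$, hence $P_V(E\cap B_r(x))$ lies in the $\delta r$-neighborhood of that subspace, whose $\mathcal H^{n-4}$-measure is at most $C_n\,\delta\, r^{n-4}$. A Vitali covering argument on $E_{\mathrm{deg}}$, using the uniform bound on $\sum r_i^{n-4}$ coming from the finite $(n-4)$-Hausdorff measure of $\Sigma$, and then sending $\delta\to 0$, yields $\mathcal H^{n-4}(P_V(E_{\mathrm{deg}}))=0$.

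For $x\in E_{\mathrm{tr}}$ the projection $P_V|_{L_x}$ is a linear isomorphism onto $V$, so $P_V$ is bi-Lipschitz on the slab $(x+L_x)_{\delta r}$ with constants uniform for small $\delta$. Combined with the slab containment of $E$, this realizes $E\cap B_r(x)$ as contained, up to an $\mathcal H^{n-4}$-null set, in the graph of a Lipschitz function over a piece of $V$. Patching countably many such local graphs shows that $E_{\mathrm{tr}}$ is $(n-4)$-rectifiable. Since $E$ is purely $(n-4)$-unrectifiable by hypothesis, $\mathcal H^{n-4}(E_{\mathrm{tr}})=0$, and because $P_V$ is $1$-Lipschitz, $\mathcal H^{n-4}(P_V(E_{\mathrm{tr}}))=0$. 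Combining both cases gives the conclusion.

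The principal obstacle is the rectifiability claim on $E_{\mathrm{tr}}$: the weak tangent plane property controls only the measure $\mu$ inside the slab, whereas the set $E$ could \emph{a priori} spiral wildly within it. One must use both the uniform lower density bound $\Theta\geq\epsilon_0^2$ (to upgrade $\mu$-concentration to $\mathcal H^{n-4}$-concentration of $E$ itself) and an iterative application of the geometric lemma at finer scales to produce genuine Lipschitz graph approximations, rather than mere slab approximations. This construction proceeds along the same lines as in the stationary Yang-Mills case of \cite{Tian:00} and the stationary harmonic map case of \cite{Lin:99,LinWang:08}, and transfers without change since only the monotonicity formula, the density bounds, and the $\epsilon$-regularity enter the argument---all of which have been established above for $\Omega$-YM connections.
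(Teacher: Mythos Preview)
Your overall strategy---split $E$ by the position of the weak tangent plane relative to $V^\perp$, bound the projection of the degenerate part by a covering argument, and show the transverse part is rectifiable---is indeed the route taken in \cite{Lin:99,Tian:00,LinWang:08}, and is what the paper defers to. Two points in your execution need correction, however.

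First, the weak tangent plane $L$ supplied by the preceding proposition depends on $\delta$ (and, in the usual formulation, on the scale $r$ as well); you write $L_x$ as if it were a single plane attached to $x$ working for all $\delta$, but if that were available then $\Sigma$ would already have approximate tangent planes and be rectifiable outright. Consequently the partition $E_{\mathrm{deg}}\sqcup E_{\mathrm{tr}}$ via the exact condition $L_x\cap V^\perp\neq\{0\}$ is both ill-defined and not quantitative enough to close the estimate. The correct dichotomy fixes $\epsilon>0$ and asks whether, for \emph{all} small $\delta$ and scales, the weak tangent plane $L_{x,\delta,r}$ is $\epsilon$-transverse to $V^\perp$ (i.e.\ $|P_Vu|\geq\epsilon|u|$ for $u\in L_{x,\delta,r}$); on the complement the covering argument yields only $\mathcal H^{n-4}(P_V(\,\cdot\,))\leq C\epsilon$, and one sends $\epsilon\to 0$ at the end.

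Second, single-scale slab containment does not put $E\cap B_r(x)$ \emph{on} a Lipschitz graph, only in a $C(\epsilon)\delta r$-neighborhood of one, so your rectifiability claim for $E_{\mathrm{tr}}$ fails as written. The actual mechanism is Federer's cone criterion: if for every small $\delta$ the weak tangent plane at $x$ is $\epsilon$-transverse to $V^\perp$, then for $y\in E$ near $x$ one applies the slab containment at scale $r=2|y-x|$ with $\delta\ll\epsilon$ to deduce $|P_{V^\perp}(y-x)|\leq C(\epsilon)\,|P_V(y-x)|$; this cone condition forces the $\epsilon$-transverse part into countably many Lipschitz graphs over $V$. Your last paragraph is right that one must work across scales, but the engine is the cone condition rather than an iteration of the geometric lemma.
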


\subsection{Positive projection density}
The argument for the following is the same as \cite{LinWang:08} and
\cite{Tian:00}. We will only point out where the change is necessary and refer the reader there for more details.
\begin{prop}\label{PositiveProjection}
For $\mathcal H^{n-4}$ a.e. points $x\in \Sigma$, 
$$
\lim_{r\rightarrow 0} \frac{\Vol_{\mathcal{H}^{n-4}}(P_V(\Sigma\cap B_r(x)))}{\alpha (n-4)r^{n-4}}\geq \frac{1}{2}
$$
for some projection $P_V: \R^n \rightarrow V\in \Gr(\R^n, n-4)$.
\end{prop}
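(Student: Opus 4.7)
The plan follows the template of \cite{LinWang:08, Tian:00}: at a generic point $x\in\Sigma$, identify the natural candidate for $V$ from the tangent measure analysis, then upgrade the weak-$*$ convergence $\mu_{\lambda_j}\rightharpoonup\eta$ to a Hausdorff-type convergence of the singular sets themselves, which forces the projected set to fill a large portion of a ball in $V$.

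First I would restrict attention to the $\mathcal{H}^{n-4}$-full measure subset $\Sigma^\ast\subset\Sigma$ consisting of points $x$ for which: (i) $\Theta^{n-4}(\mu,\cdot)$ is $\mathcal{H}^{n-4}$-approximately continuous (Lemma~\ref{Lem4.2}); (ii) $\Theta^{n-4}(\mu,x)\geq\epsilon_0^2$; and (iii) $x\in\Sigma_{n-4}\setminus\Sigma_{n-5}$, which is $\mathcal{H}^{n-4}$-generic since $\dim_\mathcal{H}\Sigma_{n-5}\leq n-5$ by the filtration proposition. At such $x$, choose a tangent measure $\eta$ whose invariance subspace $L_\eta$ has maximal dimension $n-4$. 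The cone property $\lambda^{4-n}\eta(\lambda E)=\eta(E)$ combined with translation invariance along $L:=L_\eta$ forces $\eta=\Theta^{n-4}(\mu,x)\,\mathcal{H}^{n-4}|_L$, and I take $V:=L$.

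The next step is a set-level upgrade: along the subsequence $\lambda_j\downarrow 0$ realizing $\eta$, the rescaled sets $\lambda_j^{-1}(\Sigma-x)$ converge locally to $L$ in Hausdorff distance. For one containment, the characterization $\Sigma=\{\Theta^{n-4}(\mu,\cdot)\geq\epsilon_0^2\}$ together with upper semi-continuity of $\Theta^{n-4}$ forces any Hausdorff limit point $y$ of the rescaled singular sets to satisfy $\Theta^{n-4}(\eta,y)\geq\epsilon_0^2$, hence $y\in\text{supp}(\eta)=L$. For the reverse, the uniform positive density of $\eta$ along $L$, combined with the $\epsilon$-regularity theorem (Theorem~\ref{thm:regularity}), rules out gaps: for any $y\in L\cap B_1(0)$ and any fixed $\rho>0$, one has $\mu_{\lambda_j}(B_\rho(y))\geq \tfrac{1}{2}\Theta^{n-4}(\mu,x)\,\alpha(n-4)\rho^{n-4}$ for $j$ large, so $\lambda_j^{-1}(\Sigma-x)\cap B_\rho(y)\neq\emptyset$ eventually.

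Given this two-sided control at scale $\lambda_j$, the orthogonal projection $P_V$ of $\lambda_j^{-1}(\Sigma-x)\cap B_1(0)$ contains (up to a set of $\mathcal{H}^{n-4}$-measure $O(\delta)$) any $\delta$-net of $L\cap B_{1-\delta}(0)$; taking $\delta$-neighborhoods and using the Vitali covering theorem gives a projected set covering $L\cap B_{1-\delta}(0)$ up to $o(1)$ as $\delta\downarrow 0$. Undoing the rescaling along a diagonal subsequence of $(\lambda_j,\delta)$ yields the desired lower bound; in fact the argument produces a liminf at least $1$, so the stated factor of $1/2$ is a comfortable margin. The main obstacle is precisely the upgrade from weak measure convergence to two-sided set convergence: weak-$*$ convergence $\mu_{\lambda_j}\rightharpoonup\eta$ alone does not prevent $\Sigma$ from collapsing into a thin sub-skeleton of $L$, and it is the $\epsilon$-regularity theorem, via the uniform threshold $\epsilon_0^2$, that rigidly ties the density of the limit measure back to the presence of honest singular points at scale $\lambda_j$.
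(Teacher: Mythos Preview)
Your proposal has a genuine gap at the final step, where you pass from two-sided Hausdorff convergence $\lambda_j^{-1}(\Sigma-x)\cap B_1\to L$ to a lower bound on $\mathcal{H}^{n-4}\bigl(P_V(\lambda_j^{-1}(\Sigma-x)\cap B_1)\bigr)$. Hausdorff proximity only tells you the projection is $\delta$-dense in $L\cap B_{1-\delta}$; a $\delta$-dense subset of $L$ can have $\mathcal{H}^{n-4}$-measure zero, and no Vitali-type argument manufactures measure from density. A model obstruction (take $n-4=1$): let the rescaled set consist of $\sim 1/\epsilon$ segments of length $\epsilon$ in the $L^\perp$-direction, based at an $\epsilon$-net of $L$. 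With constant density $\Theta$ this measure converges weakly to $\Theta\,\mathcal{H}^1|_L$, the Hausdorff distance to $L$ is $O(\epsilon)$, the two-sided density bounds $\epsilon_0^2\leq\Theta(\cdot)\leq C$ and even the monotonicity of $r^{4-n}\mu(B_r(\cdot))$ are respected, yet the projection to $L$ is a finite set of measure zero. The information you extract from measure convergence plus the $\epsilon_0$-threshold cannot distinguish this picture from the true one. (Note also that you cannot appeal to rectifiability of $\Sigma$ here: this proposition is an input to the rectifiability proof, not a consequence of it.)

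What the paper does instead, following Lin and Tian, is a PDE argument that uses the $\Omega$-Yang--Mills equation itself rather than only its measure-theoretic shadows. One works with the diagonal sequence of smooth connections $A_{i,\lambda_i}$ and combines the identity $d_{A}^\ast F_{A}=\pm\ast(F_{A}\wedge d\Omega)$ with the vanishing of the $L$-tangential curvature components $\sum_{\alpha\leq n-4}|\iota_{\partial_\alpha}F_{A_{i,\lambda_i}}|^2\to 0$ (a consequence of the monotonicity formula at a point with flat tangent measure) to show that the sliced energy
\[
a\ \longmapsto\ \int_{\{a\}\times \RBbb^4} |F_{A_{i,\lambda_i}}|^2\,\phi^2
\]
is $L^1$-close to a constant $C_{\lambda_i}\to \Theta^{n-4}(\mu,x)$. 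Since $\Theta^{n-4}(\mu,x)\geq\epsilon_0^2$, this forces the energy over almost every fiber $\{a\}\times\RBbb^4$ to exceed the $\epsilon$-regularity threshold, so each such fiber must meet the rescaled bubbling set; hence the projection covers almost all of $L\cap B_1$, contradicting the assumed failure of the density bound. The slice-energy constancy is precisely what rules out the vertical-segment picture above, and it is a genuinely analytic input from the equation that your geometric-measure-theoretic route does not supply. As a minor side remark, your justification of the inclusion $L\subset\limsup \lambda_j^{-1}(\Sigma-x)$ should invoke Lemma~\ref{Lem4.2}(4), i.e.\ $r^{4-n}\int_{B_r(x)}|F_{A_\infty}|^2\to 0$, to kill the smooth-curvature contribution to $\mu_{\lambda_j}$; Theorem~\ref{thm:regularity} as stated applies to connections, not directly to the limit measure.
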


\begin{proof}
Otherwise, we can find a point $x_0\in \Sigma$ so that 
$$
\limsup_{r}r^{4-n}\int_{B_r(x_0)} |F_{A_\infty}|^2=0
$$
and $\Theta^{n-4}(\mu, \cdot)$ is approximately continuous at $x_0\in \Sigma$ but 
$$
\lim_{r\rightarrow 0} \frac{\Vol_{\mathcal{H}^{n-4}}(P_V(\Sigma\cap B_r(x_0)))}{\alpha (n-4)r^{n-4}}<\frac{1}{2}.
$$
In particular, the tangent measure of $\mu$ at $x_0$ takes the form
    $\Theta^{n-4}(x_0) \mathcal{H}^{n-4}_{\R^{n-2}}$ for some
    $\R^{n-2}\subset \R^n$. Recall that from the diagonalization argument
    we assume
$$
    \mu_{\lambda_i} \rightharpoonup \Theta^{n-4}(x_0) \mathcal{H}^{n-4}_{\R^{n-4}}.
$$
Define 
$$
\alpha_{\lambda_i}=\sum^{n-2}_{\alpha=1}
    |\iota_{\partial_\alpha}F_{A_{i,\lambda_i}}|^2\dVol
$$
We know  that for any fixed $\delta>0$ and  $i$ large, 
$
\alpha_{\lambda_i}(B_{3/2}) \leq \delta
$.
Now we define 
    \begin{align*}
        \Fscr_{\lambda_i}& : (\mathbb{R}^{n-4}\times 0) \times (0,1) \rightarrow \R
        \\
        \Fscr_{\lambda_i}(x, \epsilon)&=\int_{B_2^n} |F_{A_{i,\lambda_i}}|^2(x+y)\psi_\epsilon(y_1)\phi^2(y_2) \dVol_y
    \end{align*}
Here, $y=(y_1,y_2)\subset \R^{n-4} \times \R^4$,
    $\psi_\epsilon(y_1)=\epsilon^{4-n}\psi(y_1/\epsilon)$ where
    $\psi$ is a nonnegative compactly supported function on the unit ball
    in $\R^4$ with integral being $1$, while $\phi$ is smooth and compactly supported on the unit ball in $\R^{n-4}$. To simplify the notation, we will denote 
    $F:=F_{A_{i,\lambda_i}}$, $\partial_\alpha=\frac{\partial}{\partial y_\alpha}$ and $\nabla_{\alpha}$ as the covariant derivatives. Viewing $|F|$ as a function of $y$, we have 
$$
\begin{aligned}
\partial_{\alpha} |F|^2&=-2\Tr(\nabla_{\alpha}F_{\gamma \beta} F^{\gamma \beta})\\
&=4\Tr(\nabla_{\gamma} F_{\beta \alpha} F^{\gamma \beta})\\
&=4\partial_{\gamma} \Tr(F_{\beta \alpha} F^{\gamma\beta}(x+y))\pm 4(\iota_{\partial_\alpha} F, *(F\wedge\Omega)).
\end{aligned}
$$
For any $1\leq \alpha\leq n-4$, we have 
$$
\begin{aligned}
    \frac{\partial}{\partial x_\alpha} \Fscr_{\lambda_i}=&\int_{B_2^n} \frac{\partial}{\partial x_\alpha} (|F|^2(x+y))\psi_\epsilon(y_1)\phi^2(y_2)\dVol_y 
\\
=&\int_{B_2^n} \partial_\alpha |F|^2(x+y)\psi_\epsilon(y_1)\phi^2(y_2)\dVol_y \\
=&\int_{B_2^n}  4\partial_{\gamma} \Tr(F_{\beta \alpha} F^{\gamma\beta})(x+y)\psi_\epsilon(y_1)\phi^2(y_2)\dVol_y \\&\pm \int_{B_2^n}4(\iota_{\partial_\alpha} F, *(F\wedge\Omega)))(x+y)\psi_\epsilon(y_1)\phi^2(y_2)\dVol_y\\
=&\sum_{\gamma=n-4}^n\int_{B_2^n}  4\Tr(F_{\beta \alpha} F^{\gamma\beta})\psi_\epsilon(y_1)\frac{\partial}{\partial y_\gamma}\phi^2(y_2)\dVol_y\\
&\pm \int_{B_2^n}4(\iota_{\partial_\alpha} F, *(F\wedge\Omega)))\psi_\epsilon(y_1)\phi^2(y_2)\dVol_y\\&+\sum_{\gamma=1}^{n-4}  4\frac{\partial}{\partial x_\gamma} \int_{B_2^n}  \Tr(F_{\beta \alpha} F^{\gamma\beta})(x+y)\psi_\epsilon(y_1)\phi^2(y_2)\dVol_y
\end{aligned}.
$$
This implies 
$
    \nabla \Fscr_{\lambda_i}=\vec f_{\lambda_i}+ \Div\vec
    G_{\lambda_i}
$,
where 
$$
\begin{aligned}
(\vec f_{\lambda_i})_{\alpha}=&\sum_{\gamma=n-4}^n\int_{B_2^n}  4\Tr(F_{\beta \alpha} F^{\gamma\beta})\psi_\epsilon(y_1)\frac{\partial}{\partial y_\gamma}\phi^2(y_2)\dVol_y\\ 
&\pm \int_{B_2^n}4(\iota_{\partial_\alpha} F, *(F\wedge\Omega)))\psi_\epsilon(y_1)\phi^2(y_2)\dVol_y
\end{aligned}
$$
and 
$$
(\vec G_{\lambda_i})_\alpha^{\gamma}=\int_{B_2^n}  4\Tr(F_{\beta \alpha} F^{\gamma\beta})(x+y)\psi_\epsilon(y_1)\phi^2(y_2)\dVol_y.
$$
Here the divergence of $\vec G_{\lambda_i}$ is taken for each vector component of 
$\vec G_{\lambda_i}$. Since $\alpha_{\lambda_i} \rightharpoonup 0$,
we know that for any $\delta>0$, 
$$
\|\vec f_{\lambda_i}\|_{L^2(B_2^{n-4})}+\|\vec G_{\lambda_i}\|_{L^2(B_2^{n-4})} \leq \delta
$$
for $i$ sufficient large and $\lambda$ sufficiently small.  Given this, by
\cite[Lemma 4.2.10]{LinWang:08} 
we know for any $\delta_1$ there exist constants $C_{\lambda_i}(\epsilon)$
$$
\|\Fscr_{\lambda_i}(\cdot, \epsilon)-C_{\lambda_i}(\epsilon)\|_{L^1(B^{n-2}_{2})} \leq \delta_1.
$$
Letting $\epsilon \rightarrow 0$, we have for some constants
$C_{\lambda_i}$,
$$
\left|\int_{B_2^{n-4}}|F_{A_{i,\lambda_i}}|^2(a,y_2)\phi^2(y_2)
dy_2-C_i^\lambda\right|\leq \delta_1
$$
when $i$ large. As in \cite{Lin:99, Tian:00}, this then implies 
$
\lim C_{\lambda_i}=\Theta^{n-4}(\mu, x_0)
$.
It then follows as in those references that the projection from $\R^n \rightarrow \R^{n-4}
\times 0$ will give a contradiction.
\end{proof}

\subsection{Proof of Theorem \ref{thm:rectifiable}}
Now we are ready to finish the proof for Theorem \ref{thm:rectifiable} as in
\cite{Lin:99, Tian:00}. 
By the Besicovitch-Federer decomposition theorem, 
we can write  $\Sigma=\Sigma^{r} \cup \Sigma^{u}$,
where $\Sigma^{r}$ is $(n-4)$-rectifiable while $\Sigma^u$ is purely
$(n-4)$-unrectifiable. Furthermore,  if $\Sigma^u\neq\emptyset$, then
$\Vol_{\mathcal{H}^{n-4}}(\Sigma^u)>0$. By Proposition \ref{NullProjection}, we know
$$
\Vol_{\mathcal{H}^{n-4}}(P_V(\Sigma^u\cap B_r(x)))=0
$$
while by Proposition \ref{PositiveProjection}, we have 
$$
\Vol_{\mathcal{H}^{n-4}}(P_V(\Sigma^u\cap B_r(x)))>0
$$ 
for $0<r\ll 1$. This is a contradiction. In particular, this implies 
$\Vol_{\mathcal{H}^{n-4}}(\Sigma^u)=0$, and so $\Sigma^u=\emptyset$. 
Thus, $\Sigma$ is $(n-4)$-rectifiable.

\section{Weak compactification of the moduli space of smooth $\Omega$-Yang-Mills connections}\label{ModuliCompactness}
In this section, we will study the compactification of the moduli space of smooth $\Omega$-YM connections on a fixed bundle $E$ with bounded $L^2$ norm of curvature over $(M,g)$. We denote the moduli space as 
$$
\mathcal A_{\Omega, c}:=\{A \in\mathcal{A}: d_A^*(F_A+*(F_A\wedge \Omega))=0, \int_{M} |F_A|^2 \leq c\}
$$
Given a sequence $A_i\in \mathcal \mathcal A_{\Omega, c}$, by passing to a
subsequence, we can assume $|F_{A_i}|^2\dVol$ converges to $\mu$ a sequence
of Radon measures,  and
modulo gauge transformations, $A_i$ converges to $A$ outside $\pi(\mu)$. Define $\overline{\mathcal A_{\Omega, c}}$ to be the space of such pairs $(A, \mu)$.

\begin{defi}
Given a sequence $(A_i, \mu_i) \in \overline{\mathcal A_{\Omega, c}}$, we
    say $A_i$ converges to a finite energy $\Omega$-YM connection $(A_\infty, \mu_\infty)$ if 
\begin{enumerate}
\item $\mu_i$ converges to $\mu_\infty$ weakly as a sequence of Radon measures;
\item up to gauge transforms, $A_i$ converges to $A_\infty$ outside $\pi(\mu_\infty)$.
\end{enumerate}
\end{defi}
\begin{thm}\label{Sequential Compactness}
$\overline{\mathcal A_{\Omega, c}}$ is weakly sequentially compact in the sense that every sequence $\{(A_i, \mu_i)\}$ in $\overline{\mathcal A_{\Omega, c}}$ sub-converges to some $(A_\infty, \mu_\infty)\in \overline{\mathcal A_{\Omega, c}}$.
\end{thm}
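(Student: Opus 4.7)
The plan is to produce the limit by a diagonal extraction applied to approximating sequences of smooth connections, and then invoke Theorem \ref{SequentialCompactness} on the diagonal sequence.

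First I would unpack the hypothesis. For each $i$, since $(A_i, \mu_i)\in \overline{\mathcal A_{\Omega, c}}$, there exists an approximating sequence $\{B_i^k\}_k$ of smooth $\Omega$-YM connections on $E$ with $\|F_{B_i^k}\|_{L^2(M)}\le c$ such that $\nu_i^k:= |F_{B_i^k}|^2\,dV\rightharpoonup \mu_i$ weakly as $k\to \infty$ and, modulo gauge, $B_i^k\to A_i$ smoothly on compact subsets of $M\setminus \pi(\mu_i)$. Because $M$ has bounded geometry, the set of positive Radon measures of total mass $\le c$ is metrizable and sequentially compact in the weak-$*$ topology; fix a metric $d_w$ for it. Likewise fix a compact exhaustion $K_1\subset K_2\subset \cdots$ of $M$ and, for each $\ell$ and each $\delta>0$, a countable family of seminorms measuring $C^\ell$-closeness of connections modulo gauge on a compact set.

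Next, I would choose the diagonal. For each $i$ select $k_i$ sufficiently large that $d_w(\nu_i^{k_i},\mu_i)<1/i$, and simultaneously so that $B_i^{k_i}$ lies within distance $1/i$ of $A_i$ modulo gauge, in the $C^i$-seminorms on the compact set $K_i\setminus N_{1/i}(\pi(\mu_i))$. This is possible for every fixed $i$ by the defining convergence of $\{B_i^k\}_k$ to $(A_i,\mu_i)$. The diagonal sequence $\{B_i^{k_i}\}_i$ consists of smooth $\Omega$-YM connections on $E$ with $L^2$-curvature bounded by $c$, so Theorem \ref{SequentialCompactness} applies and produces, after passing to a subsequence, a smooth $\Omega$-YM limit $A_\infty$ on a bundle $E_\infty\to M\setminus \Sigma_\infty$ together with a weak Radon-measure limit $\mu_\infty=|F_{A_\infty}|^2\,dV+\nu_\infty$, and $\pi(\mu_\infty)=\Sigma_\infty$ by Lemma \ref{Lem4.2}. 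By definition of $\overline{\mathcal A_{\Omega, c}}$, the pair $(A_\infty,\mu_\infty)$ belongs to $\overline{\mathcal A_{\Omega,c}}$.

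It then remains to verify that the original sequence $(A_i,\mu_i)$ (along the same subsequence in $i$) sub-converges to $(A_\infty,\mu_\infty)$ in the sense of the definition. The measure part is immediate from the triangle inequality $d_w(\mu_i,\mu_\infty)\le d_w(\mu_i,\nu_i^{k_i})+d_w(\nu_i^{k_i},\mu_\infty)$, both terms of which tend to zero. For the convergence of connections modulo gauge on any fixed compact $K\subset M\setminus \Sigma_\infty$, the key point is that $K$ is eventually disjoint from the bubbling sets $\pi(\mu_i)$: by Lemma \ref{Lem4.2}, $\pi(\mu_i)$ is characterized by $\Theta^{n-4}(\mu_i,\cdot)\ge \epsilon_0^2$, and upper semicontinuity of the energy density under weak-$*$ limits (together with the $\epsilon$-regularity Theorem \ref{thm:regularity}) forbids points of $\pi(\mu_i)$ from accumulating in $K\subset M\setminus \Sigma_\infty$. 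Once this is known, the smooth convergence $B_i^{k_i}\to A_\infty$ on $K$ and the $1/i$-closeness of $A_i$ to $B_i^{k_i}$ on $K$ (modulo gauge) combine, via the triangle inequality and composition of local gauge transformations/isometries $E_\infty|_K\to E|_K$, to yield $A_i\to A_\infty$ smoothly modulo gauge on $K$.

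The main obstacle is the last step: one must simultaneously reconcile the three bubbling sets $\pi(\mu_i)$, $\pi(\nu_i^{k_i})$, and $\Sigma_\infty$ on compact pieces of $M$, and choose the diagonal index $k_i$ compatibly with both the weak closeness of $\nu_i^{k_i}$ to $\mu_i$ and the smooth closeness of $B_i^{k_i}$ to $A_i$ modulo gauge. The technical heart is the exclusion-of-bubbling argument on $K$, which rests on upper semicontinuity of $\Theta^{n-4}(\cdot,x)$ under weak limits of $\mu_i$ (ultimately a consequence of the monotonicity formula Theorem \ref{Monotonicity}) and on the $\epsilon$-regularity; everything else is a bookkeeping exercise built on Theorem \ref{SequentialCompactness}.
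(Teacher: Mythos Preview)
Your proposal is correct and follows essentially the same route as the paper's proof: extract a diagonal sequence of smooth approximants, pass to a limit via Theorem~\ref{SequentialCompactness}, and then transfer the convergence back to the original $(A_i,\mu_i)$ by showing that the bubbling loci $\pi(\mu_i)$ cannot accumulate outside $\pi(\mu_\infty)$. The paper argues this last containment $\limsup_i \pi(\mu_i)\subset \pi(\mu_\infty)$ by a direct contradiction using weak convergence of measures and the density characterization of $\pi(\mu_i)$ from Lemma~\ref{Lem4.2}, which is exactly what your ``upper semicontinuity of $\Theta^{n-4}$ under weak-$*$ limits'' amounts to; the only cosmetic difference is that the paper takes the limit of the $A_i$ directly (and separately rules out a spurious extra bubbling set $\widetilde\Sigma$), whereas you route everything through the smooth diagonal $B_i^{k_i}$ and a triangle inequality.
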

\begin{proof}
 Given a sequence $(A_i, \mu_i) \in \overline{\mathcal A_{\Omega, c}}$, by assumption, for each $i$, we can find a sequence of $\{A_{ij}\}_j$ so that $\mu_{ij}=|F_{A_{ij}}|^2\dVol$ converges to $\mu_i$ weakly as a sequence of Radon measures. By a diagonal sequence argument, we can assume $\mu_{ij}$ and $\mu_i$ both converge weakly to $\mu_\infty$ as sequences of Radon measures. The following now is needed to guarantee the existence of the limit of $A_{i}$
\begin{equation}
\limsup_i \pi(\mu_i) \subset \pi(\mu_\infty).
\end{equation} 
Suppose this is not true. By passing to a subsequence, there exists a sequence of points $x_i \in \pi(\mu_i)$ which converges to $x_\infty \notin \pi(\mu_\infty)$. In particular, we have for $0<r<dist(x_\infty, \pi(\mu_\infty))$
$$
\mu_\infty(\partial B_r(x_\infty))=0,
$$ 
which implies 
$
r^{4-n}\mu_i(B_r(x_i))\leq \epsilon_0/2
$,
for $r$ sufficiently small. This, of course,
    contradicts with the assumption that $x_i \in \pi(\mu_i)$. Given this,
    up to gauge transforms, we can assume $A_i$ sub-converges to $A_\infty$
    outside $\pi(\mu_\infty)$ smoothly. Indeed, a priori, we only know that
    $A_i$ converges to $A_\infty$ outside a closed subset $\widetilde{\Sigma} \subset M \setminus \pi(\mu_\infty)$ of  Hausdorff codimension at $4$ set. However, since we already know that $\mu_\infty|_{M\setminus \pi(\mu_\infty)}=|F_{A_\infty}|^2\dVol$, by Lemma \ref{Lem4.2}, we know 
$$
r^{4-n}\mu_i(B_r(x))\leq \epsilon_0/2 
$$
for $i$ large. This implies that $A_i$ converges to $A_\infty$ smoothly
    over $B_r(x)$. In particular, we know $\widetilde{\Sigma}=\emptyset$, i.e.\ $A_i$ sub-converges to $A_\infty$ smoothly outside $\pi(\mu_\infty)$. Now by a diagonal sequence argument again, we can assume $A_{ij}$ sub-converges to $A_\infty$ smoothly outside $\pi(\mu_\infty)$. The sequential compactness follows.
\end{proof}
\begin{rmk}
\begin{itemize}
\item For general finite energy $\Omega$-YM connections on a fixed bundle over $M$, or even YM connections, we do not know  whether we can take a limit or not due to lack of control of $\text{Sing}(A_i)$. It is very crucial to assume they all come from limits of smooth connections here. 
\item The compactness we obtain here is very weak due to the fact that the
    limiting bundles $E_\infty$ are not known to be isometric to
        $E|_{M\setminus \Sigma}$. This does, however, hold in the case of Hermitian-Yang-Mills connections over general complex manifolds (see Corollary \ref{LimitingBundleOfHE})
\end{itemize}
\end{rmk}

\section{Singularity formation}\label{SingularityFormation}
\subsection{Bubbling connections at a generic point}
Using the proof of Proposition \ref{PositiveProjection}, the argument in \cite[Prop.\
4.1.1]{Tian:00} for 
the case of Yang-Mills connections   gives 
\begin{prop}
Fix a point  $x\in \Sigma$ so that
\begin{itemize}
\item the tangent plane of $\Sigma$ at $x$ exists uniquely;
\item $\Theta^{n-4}(\mu, \cdot)$ is $\mathcal{H}^{n-4}$-Hausdorff continuous at $x$ ;
\item $\limsup_r r^{4-n} \int_{B_r}|F_{A_\infty}|^2=0$.
\end{itemize} 
By passing to a subsequence, up to gauge transforms, $A_{i,\lambda_i}$ converges to a  $\Omega_x$-YM connection $B_\infty$ over $\R^n$  with $\R^n=T_x\Sigma \times (T_x\Sigma)^\perp$ satisfying 
$
\iota_v F_{B_\infty}=0
$,
for any $v\in T_x\Sigma$. 
\end{prop}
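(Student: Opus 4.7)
The plan is to rescale around $x$ by the sequence $\lambda_i\downarrow 0$, extract a limit as a bubble on $\R^n$, and then show this bubble is translation-invariant along $T_x\Sigma$. Working in geodesic normal coordinates centered at $x$, set $A_{i,\lambda_i}:=\tau_{\lambda_i}^*A_i$, with rescaled metric $g_{\lambda_i}:=\lambda_i^{-2}\tau_{\lambda_i}^*g$ and rescaled form $\Omega_{\lambda_i}:=\lambda_i^{4-n}\tau_{\lambda_i}^*\Omega$. Under this scaling, $(g_{\lambda_i},\Omega_{\lambda_i})$ converges smoothly on compact sets to the flat $(g_x,\Omega_x)$, and each $A_{i,\lambda_i}$ is an $\Omega_{\lambda_i}$-YM connection whose curvature has uniformly bounded $L^2$ norm on every ball $B_R(0)$ by Theorem \ref{Monotonicity}. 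A diagonal application of Theorem \ref{SequentialCompactness} yields, after passing to a subsequence and modding out by gauge, a smooth $\Omega_x$-YM connection $B_\infty$ on $\R^n\setminus \widetilde{\Sigma}_\infty$, with $A_{i,\lambda_i}\to B_\infty$ locally smoothly off $\widetilde{\Sigma}_\infty$. Simultaneously, $|F_{A_{i,\lambda_i}}|^2\dVol_{g_{\lambda_i}}$ converges weakly to the tangent cone measure $\eta$ of Proposition \ref{Tangent cone measure}.

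Next, I identify $\widetilde{\Sigma}_\infty$ with $T_x\Sigma$. The uniqueness of the tangent plane to $\Sigma$ at $x$, combined with the $\mathcal{H}^{n-4}$-continuity of $\Theta^{n-4}(\mu,\cdot)$ at $x$ and the vanishing hypothesis $\limsup_{r\to 0}r^{4-n}\int_{B_r(x)}|F_{A_\infty}|^2=0$, forces the full energy density to be carried by the tangent $(n-4)$-plane in the limit, i.e.
$$
\eta=\Theta^{n-4}(\mu,x)\,\mathcal{H}^{n-4}_{T_x\Sigma}.
$$
By the characterization of the blow-up locus from Lemma \ref{Lem4.2} applied to the rescaled sequence, this gives $\pi(\eta)=T_x\Sigma$, and hence $\widetilde{\Sigma}_\infty\subset T_x\Sigma$.

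The core step is now to show $\iota_v F_{B_\infty}=0$ for $v\in T_x\Sigma$. Choose coordinates with $T_x\Sigma=\R^{n-4}\times\{0\}$ and orthonormal tangential frame $\partial_1,\ldots,\partial_{n-4}$. It suffices to establish
$$
\alpha_{\lambda_i}:=\sum_{\alpha=1}^{n-4}|\iota_{\partial_\alpha}F_{A_{i,\lambda_i}}|^2\dVol_{g_{\lambda_i}}\rightharpoonup 0
$$
weakly on compact sets of $\R^n$; smooth convergence off $T_x\Sigma$ then yields $\iota_{\partial_\alpha}F_{B_\infty}=0$ pointwise on $\R^n\setminus T_x\Sigma$, and hence on the entire open set where $B_\infty$ is smooth. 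This weak vanishing is precisely what is proved en route to Proposition \ref{PositiveProjection}: the cone property of $\eta$ together with the fact that its support is an affine $(n-4)$-plane invariant under $T_x\Sigma$-translations imply that the tangential components of $F$ carry no mass in the limit. The same estimate (for any test function $\phi$, rather than characteristic functions of balls) yields the claimed weak convergence to zero.

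The main obstacle is Step three. The monotonicity formula directly controls only the radial piece $|\iota_{\partial_r}F|$, not arbitrary tangential components; the required bound instead comes from combining the Yang-Mills identity $\partial_\alpha|F|^2=4\partial_\gamma\Tr(F_{\beta\alpha}F^{\gamma\beta})\pm 4\langle\iota_{\partial_\alpha}F,\ast(F\wedge d\Omega)\rangle$ with the translational structure of the tangent cone measure, following the divergence argument of Proposition \ref{PositiveProjection}. The key subtlety is ensuring that the $d\Omega$-induced error term, which is absent in the Yang-Mills case treated by Tian, remains controlled uniformly in $i$; but since $\Omega_{\lambda_i}\to\Omega_x$ smoothly and $|d\Omega_{\lambda_i}|=\lambda_i|d\Omega|\circ\tau_{\lambda_i}\to 0$, this term vanishes in the limit exactly as in the proof of Proposition \ref{Tangent cone measure}.
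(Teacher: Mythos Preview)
Your proposal is correct and follows the same approach the paper indicates: the paper's own proof is the single sentence ``Using the proof of Proposition \ref{PositiveProjection}, the argument in \cite[Prop.\ 4.1.1]{Tian:00} for the case of Yang-Mills connections gives [the result],'' and your sketch is precisely an expansion of that reference. One small point of precision: in the proof of Proposition \ref{PositiveProjection} the vanishing $\alpha_{\lambda_i}(B_{3/2})\leq\delta$ is \emph{asserted} rather than derived there, so ``proved en route'' is a slight overstatement; the actual mechanism (monotonicity centered at nearby points of $\Sigma$ whose density is close to $\Theta^{n-4}(\mu,x)$ by approximate continuity, so that the monotonicity excess---which controls the radial component from each such center---tends to zero, and varying the center over $T_x\Sigma$ yields all tangential directions) is what you allude to but do not quite spell out. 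Your handling of the $d\Omega$ correction, noting that $|d\Omega_{\lambda_i}|\to 0$ under rescaling exactly as in Proposition \ref{Tangent cone measure}, is the right observation and is the only place where the $\Omega$-YM case genuinely differs from Tian's Yang-Mills argument.
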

Following \cite{Tian:00}, we call $B_\infty$  a \emph{bubbling connection} of the sequence $\{A_i\}$ at $x$.
\subsection{Tangent cones of the limits}\label{Tangent cone}
Denote $(A_\infty^\lambda, \mu_\infty^\lambda)=\lambda^*(A_\infty, \mu_\infty)$ where $\lambda: B_{\lambda^{-1}\delta_0}(x) \rightarrow B_{\delta_0}(x)$. 
\begin{prop}
By passing to a subsequence, 
\begin{itemize}
\item $\mu_\infty^\lambda$ converges to a cone measure $\eta$;
\item up to gauge transforms, $A_\infty^\lambda$ converges to $A_\infty^{c}$ outside 
$$\pi(\eta)=\{x\in \R^n: \Theta^{n-4}(\eta, x) \geq \epsilon_0^2\}$$
which is scaling  invariant. Furthermore, $\iota_{\partial_r} F_{A_\infty^c}=0.$
\end{itemize}
\end{prop}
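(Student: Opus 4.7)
The plan is to reduce the statement to the smooth case already handled in Proposition \ref{Tangent cone measure}. The pair $(A_\infty,\mu_\infty)$ is itself the weak sub-limit of a sequence of smooth $\Omega$-YM connections $A_i$, so by a diagonal sequence argument I can choose $\lambda_i \downarrow 0$ such that the sequences $(A_{i,\lambda_i},\mu_{i,\lambda_i})$ and $(A_\infty^{\lambda_i},\mu_\infty^{\lambda_i})$ have the same weak limit $(A_\infty^c,\eta)$. In particular, $\eta$ is obtained as the tangent cone measure of a smooth sequence centered at $x$, and all the structural conclusions of Proposition \ref{Tangent cone measure} apply.

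Concretely, I would proceed in the following order. First, by the monotonicity formula of Theorem \ref{Monotonicity}, the function $e^{ar}r^{4-n}\mu_i(B_r(x))$ is monotone in $r$ uniformly in $i$, giving uniform bounds $\mu_\infty^{\lambda}(B_R(0)) \leq C R^{n-4}$ for each fixed $R>0$, hence weak subsequential convergence of $\mu_\infty^{\lambda}$ to some Radon measure $\eta$. Second, invoking the version of Proposition \ref{Tangent cone measure} for the smooth sequence $A_{i,\lambda_i}$, together with the computation reproduced in the Remark following it (the distributional identity $\int ((n-4)\psi_s + s^{-1}x\cdot(\nabla\psi)_s)\,d\eta=0$ obtained by passing to the limit in the monotonicity identity for $A_{i,\lambda_i}$), shows that $\eta$ is a cone measure. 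Third, since $\eta$ is a cone measure, $\Theta^{n-4}(\eta,\lambda y)=\Theta^{n-4}(\eta,y)$ for every $\lambda>0$, so $\pi(\eta)=\{y:\Theta^{n-4}(\eta,y)\geq\epsilon_0^2\}$ is scaling invariant about the origin.

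For the convergence $A_\infty^\lambda\to A_\infty^c$ outside $\pi(\eta)$, I would apply Theorem \ref{SequentialCompactness} to the rescaled sequence $\{A_\infty^{\lambda_i}\}$. On any compact $K\Subset \R^n\setminus\pi(\eta)$, $\epsilon$-regularity combined with the energy bound $\int_K |F_{A_\infty^{\lambda_i}}|^2\,dV_{\lambda_i} \leq \mu_\infty^{\lambda_i}(K')\to\eta(K')$ for $K\Subset K'\Subset \R^n\setminus\pi(\eta)$ gives uniform $C^\infty$ bounds, so Uhlenbeck compactness produces a smooth limit $A_\infty^c$ after gauge transformation. The bubbling set of this convergence coincides with $\pi(\eta)$ by the characterization in Lemma \ref{Lem4.2}. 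Finally, the radial invariance $\iota_{\partial_r}F_{A_\infty^c}=0$ follows from the monotonicity formula applied to $A_{i,\lambda_i}$: the annular integral
\[
\int_{B_{r_2}\setminus B_{r_1}} r^{4-n}e^{ar}|\iota_{\partial_r}F_{A_{i,\lambda_i}}|^2
\]
is bounded by the increment $e^{ar_2}r_2^{4-n}\mu_{i,\lambda_i}(B_{r_2})-e^{ar_1}r_1^{4-n}\mu_{i,\lambda_i}(B_{r_1})$, which in the limit equals $r_2^{4-n}\eta(B_{r_2})-r_1^{4-n}\eta(B_{r_1})=0$ by the cone property of $\eta$. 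Smooth convergence on compact subsets of $\R^n\setminus\pi(\eta)$ then forces $\iota_{\partial_r}F_{A_\infty^c}=0$ there.

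The main obstacle is justifying the cone identity directly for $\mu_\infty$, since the defect measure $\nu$ prevents any straightforward integration-by-parts argument of the type used in Proposition \ref{Tangent cone measure}. My workaround is precisely the diagonal sequence argument in the opening paragraph, which replaces $(A_\infty,\mu_\infty)$ by the smooth approximants $(A_i,\mu_i)$ throughout and thereby keeps all computations in the regime where the formulas derived in Section \ref{MonotonicityRegularity} apply pointwise; the limiting object $\eta$ inherits the cone property from those smooth identities in the distributional limit.
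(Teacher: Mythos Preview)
Your proposal is correct and follows essentially the same route as the paper. Two minor points of alignment: first, Proposition~\ref{Tangent cone measure} is already stated for the rescalings of the \emph{limit} measure $\mu=\mu_\infty$ (not for smooth connections), and its proof already runs the diagonal argument with $A_{i,\lambda_i}$ that you describe as your ``workaround''---so your first bullet is literally a citation of that proposition, as in the paper. Second, for the convergence of $A_\infty^\lambda$ outside $\pi(\eta)$ the relevant reference is Theorem~\ref{Sequential Compactness} (the compactness for elements of $\overline{\mathcal A_{\Omega,c}}$) rather than Theorem~\ref{SequentialCompactness}, since $A_\infty$ is not smooth; the key step, which you sketch correctly via the density bound, is $\limsup_\lambda \pi(\mu_\infty^\lambda)\subset\pi(\eta)$, exactly as in the paper's proof.
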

\begin{proof}
The first statement follows from Proposition \ref{Tangent cone measure}. Given this, it follows the same as Theorem \ref{Sequential Compactness} that 
$$
\limsup_{\lambda} \pi(\mu_\infty^\lambda) \subset \pi(\eta).
$$
Now up to gauge transforms, we can assume $A_\infty^\lambda$   sub-converges to $A_\infty^c$ smoothly outside $\pi(\eta)$. It follows from the monotonicity formula that 
$
\iota_{\partial_r} F_{A_\infty^c}=0
$,
outside $\pi(\eta)$. Since $\eta$ is a cone measure, we know also $\pi(\eta)$ is also a cone.  
\end{proof}
We call $(A_\infty^c, \eta)$ a \emph{tangent cone} of $(A_\infty, \mu_\infty)$ at the point $x$. A priori, we donot know whether it is unique or not since this involves a choice of the subsequence. 

\begin{rmk}
In \cite{Tian:00}, 
    the tangent cones of general stationary Yang-Mills connections are shown to exist where the stationary condition is needed for the monotonicity formula. Here as long as we know $(A_\infty,\mu_\infty)$ comes from the limit of smooth connections, it already has a monotonicity property that suffices for use. 
\end{rmk}

\subsection{$\Omega$-ASD instantons and calibrated geometries}
Given the analytic results above, it is straightforward to see that the results in
\cite{Tian:00} hold for general $\Omega$-ASD instantons without assuming
$\Omega$ to be closed. More precisely, we assume $(A_\infty, \mu_\infty)$
is an finite energy $\Omega$-ASD instanton which comes from the limit of a sequence of smooth $\Omega$-ASD instantons with uniformly bounded $L^2$ norm on curvature. We also write 
$$
\mu_\infty=|F_{A_\infty}|^2 \dVol+\Theta^{n-4}(x) \mathcal{H}^{n-4}_{\Sigma}
$$
as before. Similar to Proposition $4.2.1$ in \cite{Tian:00}, the following holds
\begin{prop}
A bubbling connection $B_\infty$ of $(A_\infty, \mu_\infty)$ at $\mathcal{H}^{n-4}$ a.e.  $x\in \Sigma$ is a $\Omega_x$-ASD instanton. In particular, $\Omega_x$ induces a volume form of $\Sigma$ at $x$.
\end{prop}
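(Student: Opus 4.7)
The plan has two parts: first upgrade the bubbling connection $B_\infty$ from an $\Omega_x$-Yang-Mills connection to an $\Omega_x$-ASD instanton, then combine this with the annihilator property $\iota_v F_{B_\infty}=0$ for $v\in T_x\Sigma$ (from the previous proposition) to force $\Omega_x$ to restrict to a volume form on $T_x\Sigma$.

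For the first part, I would start from the $\Omega$-ASD equation $*F_{A_i}+F_{A_i}\wedge\Omega=0$ satisfied by the smooth approximations $A_i$. Pulling back by the dilation $\tau_{\lambda_i}$ centered at $x$ and rescaling the metric to $g_{\lambda_i}=\lambda_i^{-2}\tau_{\lambda_i}^* g$, the conformal identity $*_{\lambda^2 h}=\lambda^{n-4}*_h$ on $2$-forms gives
$$*_{g_{\lambda_i}}F_{A_{i,\lambda_i}}+F_{A_{i,\lambda_i}}\wedge\Omega_{\lambda_i}=0,\qquad \Omega_{\lambda_i}:=\lambda_i^{-(n-4)}\tau_{\lambda_i}^*\Omega.$$
Since pulling back an $(n-4)$-form by $\tau_{\lambda_i}$ introduces a factor of $\lambda_i^{n-4}$ from the coordinate differentials, $\Omega_{\lambda_i}$ converges smoothly on compact sets to the constant $(n-4)$-form $\Omega_x$ on $T_xM\cong\mathbb{R}^n$, and $g_{\lambda_i}\to g_x$ flat. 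Using the smooth convergence $A_{i,\lambda_i}\to B_\infty$ off the blow-up set (in a gauge), one can pass to the limit to obtain $*F_{B_\infty}+F_{B_\infty}\wedge\Omega_x=0$, so $B_\infty$ is $\Omega_x$-ASD.

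For the second part, split $\mathbb{R}^n=V\oplus W$ with $V:=T_x\Sigma$ and $W:=V^\perp$, so $\dim W=4$, and pick orthonormal bases $\{e_1,\ldots,e_{n-4}\}$ of $V$ and $\{e_{n-3},\ldots,e_n\}$ of $W$; set $e^V:=e^1\wedge\cdots\wedge e^{n-4}$. The hypothesis $\iota_v F_{B_\infty}=0$ for $v\in V$ forces $F_{B_\infty}\in\Lambda^2 W^*$, and for the product flat metric $*F_{B_\infty}=e^V\wedge(*_W F_{B_\infty})$, where $*_W$ is the Hodge star on $W\cong\mathbb{R}^4$. Decompose
$$\Omega_x=c_0\, e^V+R,$$
where every monomial of $R$ contains at least one index in $W$. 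An index-count shows that only the $c_0 e^V$ summand contributes a component of bidegree $(n-4,2)$ with respect to the $V,W$ splitting to $F_{B_\infty}\wedge\Omega_x$. Projecting the $\Omega_x$-ASD equation onto this summand yields
$$*_W F_{B_\infty}+c_0\, F_{B_\infty}=0.$$
Applying $*_W$ again and using $*_W^2=\id$ on $\Lambda^2 W^*$ gives $(c_0^2-1)F_{B_\infty}=0$. Since $B_\infty$ is nontrivial by Theorem~\ref{SequentialCompactness} (combined with $\epsilon$-regularity at the bubbling point, which prevents $F_{B_\infty}\equiv 0$), we conclude $c_0=\pm 1$. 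Therefore $\Omega_x(e_1,\ldots,e_{n-4})=\pm 1$, i.e.\ $\Omega_x|_V=\pm\vol_V$.

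The main technical point is the bookkeeping of the rescaling so that the limiting equation is genuinely the $\Omega_x$-ASD equation with $\Omega_x$ the pointwise value at $x$: the $\lambda^{n-4}$ factor coming from the conformal change of $\ast$ on $2$-forms must cancel exactly against the $\lambda^{n-4}$ produced by pulling back an $(n-4)$-form. This is where the argument genuinely uses that the equation is preserved under rescaling even though $\Omega$ is not assumed closed (the closedness of $\Omega$ would matter only for converting $\Omega$-ASD to Yang-Mills, which we do not need). Once the limit equation is in hand, the decomposition step is essentially formal linear algebra.
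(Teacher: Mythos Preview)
Your argument is correct and follows the same route the paper indicates by deferring to Tian's Proposition~4.2.1: pass the $\Omega$-ASD equation through the rescaling to obtain the constant-coefficient $\Omega_x$-ASD equation for the bubble, then use $\iota_v F_{B_\infty}=0$ for $v\in T_x\Sigma$ to reduce to the $4$-dimensional slice and read off $c_0=\pm 1$. Your explicit bookkeeping of the $\lambda^{n-4}$ cancellation and the bidegree projection is exactly what makes Tian's argument go through without assuming $d\Omega=0$, which is the only point the paper needs here.
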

This 
implies the following, as
pointed out  in the Yang-Mills case in 
\cite[p.\ 242, Remark 5]{Tian:00}). The proof is exactly the same.
\begin{thm}\label{Calibrated}
For the limiting connection $(A_\infty, \mu_\infty)$
\begin{itemize}
\item $\displaystyle\frac{1}{8\pi^2}\Theta^{n-4}(x)$ is integer valued at
    $\mathcal{H}^{n-4}$ a.e.\ $x\in \Sigma$; \\
\item  $\Omega$ restricts to a volume form of $T_x\Sigma$ at $\mathcal H^{n-4}$ a.e. $x\in \Sigma$.
\end{itemize}
\end{thm}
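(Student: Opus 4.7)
The plan is to use the bubbling connection $B_\infty$ supplied by the preceding proposition, which at $\mathcal H^{n-4}$-a.e.\ $x \in \Sigma$ is a smooth non-trivial $\Omega_x$-ASD instanton on $\R^n = T_x\Sigma \oplus N$, $N := (T_x\Sigma)^\perp \cong \R^4$, equipped with the flat metric $g_x$, satisfying $\iota_v F_{B_\infty} = 0$ for every $v \in T_x\Sigma$. The strategy is to descend $B_\infty$ to a four-dimensional instanton, read off the restriction of $\Omega_x$ to $T_x\Sigma$ algebraically, and then apply the classical removable-singularity and charge-quantization statements.

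First I would work in an axial gauge along $T_x\Sigma$: the condition $\iota_v F_{B_\infty} = 0$ then becomes $\partial_v A_N = 0$, so $B_\infty$ is translation-invariant along $T_x\Sigma$ and descends to a connection on $N \cong \R^4$ whose curvature is $F_{B_\infty}$, viewed as a section of $\Lambda^2 N^\ast$. Decompose
\[
\Omega_x = \sum_{p+q=n-4} \Omega_x^{(p,q)}, \qquad \Omega_x^{(p,q)} \in \Lambda^p T_x\Sigma^\ast \otimes \Lambda^q N^\ast,
\]
and set $\Omega_x^{(n-4,0)} = c\,\mathrm{dvol}_{T_x\Sigma}$. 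Because $F_{B_\infty}$ is of pure bidegree $(0,2)$, $*F_{B_\infty} = \mathrm{dvol}_{T_x\Sigma} \wedge *_N F_{B_\infty}$ is of pure bidegree $(n-4,2)$, and the $(n-4,2)$-component of the equation $*F_{B_\infty} + F_{B_\infty} \wedge \Omega_x = 0$ reduces to $*_N F_{B_\infty} = -c\, F_{B_\infty}$ on $\R^4$. Since $F_{B_\infty} \neq 0$ and $*_N$ has only the eigenvalues $\pm 1$ on $\Lambda^2 N^\ast$, one is forced to $c = \pm 1$ and $F_{B_\infty}$ is either self-dual or anti-self-dual on $\R^4$. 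In particular $\Omega_x|_{T_x\Sigma} = \pm\,\mathrm{dvol}_{T_x\Sigma}$, which is the calibration statement.

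For the integrality statement, the descended connection on $\R^4$ is now an (anti-)self-dual connection of finite $L^2$-energy, bounded by $\sup_i \|F_{A_i}\|_{L^2}$. Uhlenbeck's removable singularity theorem extends it to a smooth instanton on $S^4$, whose energy is quantized: $\|F_{B_\infty}\|_{L^2(\R^4)}^2 = 8\pi^2 k$ for some $k \in \Z_{\ge 0}$. It remains to match this with $\Theta^{n-4}(x)$. At a generic $x$ there is simultaneously a unique tangent plane $T_x\Sigma$, $\mathcal H^{n-4}$-approximate continuity of $\Theta^{n-4}(\mu,\cdot)$, and $\limsup_r r^{4-n}\int_{B_r(x)}|F_{A_\infty}|^2 = 0$. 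Using Theorem \ref{Monotonicity}, smooth convergence of $\{A_{i,\lambda_i}\}$ to $B_\infty$ on compact sets, and $\epsilon$-regularity (Theorem \ref{thm:regularity}) away from the concentration locus, the rescaled measures $\lambda_i^{4-n}(\tau_{\lambda_i})_\ast \mu_i$ concentrate along $T_x\Sigma$ with density equal to $\|F_{B_\infty}\|_{L^2(\R^4)}^2$; comparison with Proposition \ref{Tangent cone measure} then yields $\Theta^{n-4}(x) = 8\pi^2 k$.

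The main obstacle is this last identification: the algebraic and gauge-theoretic steps are essentially formal once the bubble $B_\infty$ is in hand, but pinning down the measure-theoretic density $\Theta^{n-4}(x)$ as exactly the $L^2$-energy of the four-dimensional instanton (rather than merely bounding one by the other) requires combining monotonicity, $\epsilon$-regularity, the rectifiability of $\Sigma$ (Theorem \ref{thm:rectifiable}), and the existence of unique tangent planes, to rule out any energy escaping along tangential directions under the blow-up.
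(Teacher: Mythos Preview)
Your proposal is correct and follows essentially the same route as the paper, which itself gives no independent argument but simply states that the proof is identical to the Yang--Mills case in \cite[p.~242, Remark 5]{Tian:00}; you have accurately reconstructed that argument (reduction of the bubble to a four-dimensional (A)SD connection via the bidegree decomposition of $\Omega_x$, Uhlenbeck's removable singularity and charge quantization on $S^4$, and the measure-theoretic identification of $\Theta^{n-4}(x)$ with the bubble energy). Your flagged ``main obstacle'' is exactly the point where Tian's argument requires the combination of monotonicity, $\epsilon$-regularity, and the generic-point hypotheses, so your assessment of where the work lies is on target.
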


\section{Removable Singularities}\label{RemovableSingularities}
In this section, using the main results in \cite{SmithUhlenbeck:18}
we generalize the removable singularity theorem for
stationary Yang-Mills fields in \cite{TaoTian:04} 
to  the case of  $\Omega$-YM connections. The argument  closely follows
\cite[Theorem 10]{SmithUhlenbeck:18}. Below we will denote by $A$  an $\Omega$-YM connection
defined on the trivial bundle over $M\setminus \Sigma$,  where $M=[-4,4]^{n}$
endowed with a smooth Riemannian metric, $\Omega$ is a smooth $(n-4)$-form
on $M$, and $\Sigma$
is a closed subset of $U$ of finite $(n-4)$-dimensional Hausdorff measure.

\begin{thm}
If $\sup_{x\in M} \sup_{\sigma} f_2(x, r)$ is sufficiently small, then for
    any $B_{r}(x)\subset \Omega$, there exists a gauge transform $g$ over
    $B_{r}(x)\setminus \Sigma$ so that $g(A)$ extends to 
    a smooth connection over $B_r(x)$.
\end{thm}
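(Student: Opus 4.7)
The plan is to adapt the proof of Theorem 10 in \cite{SmithUhlenbeck:18} to the $\Omega$-YM setting, using the fact that the two main analytic ingredients -- the monotonicity formula (Theorem \ref{Monotonicity}) and the $\varepsilon$-regularity estimate (Theorem \ref{thm:regularity}) -- have already been established for $\Omega$-YM connections. Fix $B_r(x) \subset M$; by rescaling we may assume $r=1$ and write $\varepsilon := \sup_{y,\sigma} f_2(y,\sigma)$. First, I would combine the small-energy hypothesis with Theorem \ref{Monotonicity} to ensure $f_2(y,\rho) \leq C\varepsilon$ for every $y \in B_1(x)\setminus \Sigma$ and every $\rho \leq d(y,\Sigma)$, and then apply Theorem \ref{thm:regularity} at the scale $d(y,\Sigma)$ to obtain the pointwise bound $|F_A|(y) \leq C\varepsilon\, d(y,\Sigma)^{-2}$ on $B_1(x)\setminus \Sigma$.

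Next I would cover $B_1(x)\setminus \Sigma$ by a Whitney-type family of balls $B_j = B_{\sigma_j/8}(y_j)$ with $\sigma_j = d(y_j,\Sigma)$, and, after rescaling to unit size, apply Uhlenbeck's good-gauge theorem (Theorem \ref{thm:good-gauge}) on each $B_j$ to obtain local Coulomb gauges $g_j$ in which $g_j^\ast A$ enjoys the $L^p_1$-estimate $\|g_j^\ast A\|_{L^p_1(B_j)} \leq C\|F_A\|_{L^p(B_j^+)}$ for any $n/2 < p < n$. The pointwise decay from the first step gives $\|F_A\|_{L^p(B_j)} \leq C\varepsilon \sigma_j^{n/p - 2}$, which is what the patching requires. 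Following the dyadic-annulus patching of Tao--Tian \cite{TaoTian:04} as simplified in Smith--Uhlenbeck, I would then splice these local gauges into a single gauge transformation $g$ on $B_1(x)\setminus \Sigma$ so that $g^\ast A$ lies in $W^{1,p}_{\mathrm{loc}}(B_1(x)\setminus \Sigma)$ with a global bound. The key point is that the transition functions between overlapping $B_j$'s are uniformly close to the identity in $W^{2,p}$, which lets them be composed into a globally defined smooth gauge on the complement of $\Sigma$.

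Once $g^\ast A$ has been placed in Coulomb gauge with global $W^{1,p}$ bounds, I would invoke the removal-of-singularities property for Sobolev forms: since $\Sigma$ has finite $(n-4)$-dimensional Hausdorff measure it has vanishing Sobolev $p$-capacity for $p < n-3$ (and in particular zero $n$-dimensional Lebesgue measure), so the 1-form $g^\ast A$ extends to a distributional $W^{1,p}$ connection on the full ball $B_1(x)$. Finally, smoothness follows from elliptic bootstrapping: in Coulomb gauge $d^\ast(g^\ast A)=0$, and the $\Omega$-YM equation in the form \eqref{eqn:alt-oym} gives $\Delta (g^\ast A) = \{g^\ast A,d(g^\ast A)\} + \{g^\ast A, g^\ast A, g^\ast A\} \mp *(F_{g^\ast A}\wedge d\Omega)$, which is a semilinear elliptic system with smooth coefficients. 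Starting from $W^{1,p}$ regularity with $p > n/2$ and iterating standard multiplication--embedding estimates as in the proof of Theorem \ref{thm:meyer} promotes the solution to $C^\infty$.

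The main obstacle I expect is the patching step, where the gauge transitions between Whitney balls approaching $\Sigma$ must be controlled uniformly at every scale and then composed into a single gauge; this is delicate already in the pure Yang-Mills setting. For $\Omega$-YM the additional source term $\pm *(F_A\wedge d\Omega)$ in \eqref{eqn:alt-oym} is a bounded lower-order perturbation (since $\Omega$ is smooth on the compact manifold), and it can be absorbed into the Meyers-type estimate used in the proof of Theorem \ref{thm:meyer}; I do not expect it to cause any serious difficulty beyond bookkeeping. Thus the removable singularity theorem for $\Omega$-YM follows from the Smith--Uhlenbeck strategy with essentially cosmetic modifications, once the foundational results of Section \ref{MonotonicityRegularity} are in hand.
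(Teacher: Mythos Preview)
Your outline follows the original Tao--Tian strategy (pointwise curvature decay from $\varepsilon$-regularity, Whitney balls, local Coulomb gauges, then patching), whereas the paper takes a substantially shorter route coming directly from \cite{SmithUhlenbeck:18}. The paper does not invoke Theorem~\ref{Monotonicity} or Theorem~\ref{thm:regularity} here at all; instead it shows that $f=|F_A|$ satisfies the Smith--Uhlenbeck differential inequality
\[
-\Delta f + \alpha\,\frac{|df|^2}{f} - c\,|F_A|^2 f \;\leq\; C f
\]
on $M\setminus\Sigma$ for some $\alpha>0$. The derivation is a short computation: start from the Weitzenb\"ock identity \eqref{eqn:weitzenbock}, absorb the extra $\Omega$-term $(\{d\Omega,\nabla_A F_A\},F_A)$ via H\"older, and then apply the improved Kato inequality $|\nabla_A F_A|^2+|d_AF_A|^2+|d_A^\ast F_A|^2 \geq \tfrac{n}{n-1}\,|d|F_A||^2$ from \cite[Thm.~5]{SmithUhlenbeck:18}. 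Once this inequality holds, \cite[Thm.~9]{SmithUhlenbeck:18} gives $f\in L^\infty$ directly, and the gauge extension is quoted from \cite[App.~C, Thm.~19]{SmithUhlenbeck:18}. No Whitney decomposition or explicit gauge patching is ever carried out.

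What your approach buys is a more self-contained argument that avoids the improved Kato inequality and the Morrey-type analysis hidden in \cite[Thm.~9]{SmithUhlenbeck:18}; what the paper's approach buys is that the entire proof fits in half a page once those black boxes are accepted. One caution about your sketch: the capacity claim is not correct as stated. A set of finite $\mathcal{H}^{n-4}$ measure has vanishing $W^{1,p}$-capacity roughly when $p\leq 4$, not when $p<n-3$; so for $n\geq 8$ the interval $(n/2,4]$ required for both the Uhlenbeck gauge and a naive Sobolev extension is empty, and the extension across $\Sigma$ cannot be done by an off-the-shelf removability lemma. In the actual Tao--Tian argument this is handled by the structure of the patched gauge (the construction itself forces the connection to extend in a suitable Morrey class), so if you pursue this route you would need to be more careful at precisely the step you flagged as the main obstacle.
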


\begin{proof}
Denote $f=|F_{A}|$. It suffices to show that $f$ satisfies
\begin{equation}\label{SU-type inequality}
-\Delta f+\alpha \frac{|df|^2}{f}-c|F_A|^2 f\leq C f
\end{equation}
    over $M\setminus \Sigma$ for some $\alpha>0$. Indeed, given
    \eqref{SU-type inequality}, by 
    \cite[Thm.\ 9]{SmithUhlenbeck:18} we know that
    $f \in L^\infty ([-1,1]^{n})$. Now the existence of the gauge
    transformation  follows from 
    \cite[App.\ C, Thm.\ 19]{SmithUhlenbeck:18}.
    It remains to show that $f$ satisfies the inequality (\ref{SU-type
    inequality}). By  \eqref{eqn:weitzenbock} we have 
$$
\begin{aligned}
-\frac{1}{2}\Delta |F_A|^2&=-|\nabla_A F_A|^2+(\nabla_A^*\nabla_A F_A, F_A)\\
&=-|\nabla_A F_A|^2+(\{F_A, F_A\}, F_A)+(\{R_g, F_A\}, F_A)+(\{d\Omega, \nabla_A F_A\}, F_A) 
\end{aligned}
$$
which implies 
$$
\begin{aligned}
&-\frac{1}{2}\Delta |F_A|^2+|\nabla_A F_A|^2+|d_A F_A|^2+|d_A^*F_A|^2\\
\leq& (\{F_A, F_A\}, F_A)+(\{R_g, F_A\}, F_A)+(\{d\Omega, \nabla_A F_A\}, F_A)+|d \Omega \wedge F_A|^2\\
\leq & C |F_A|^3 + C_\epsilon |F_A|^2+\epsilon |\nabla_A F_A|^2
\end{aligned}
$$
where the last line follows from H\"older's inequality,  and
    $0<\epsilon\ll 1$ is to be determined later. This then implies 
    \begin{equation} \label{eqn:ineq}
-\frac{1}{2}\Delta |F_A|^2+(1-\epsilon)(|\nabla_A F_A|^2+|d_A F_A|^2+|d_A^*F_A|^2) -C |F_A|^3 \leq  C_\epsilon |F_A|^2.
    \end{equation}
Now the improved Kato inequality (see \cite[Thm.\ 5]{SmithUhlenbeck:18})
    gives 
$$
|\nabla_A F_A|^2+|d_A F_A|^2+|d_A^*F_A|^2 \geq \frac{n}{n-1} |d|F_A||^2.
$$
    Combined with \eqref{eqn:ineq} this gives
$$
-\frac{1}{2}\Delta |F_A|^2+(1-\epsilon) \frac{n}{n-1} |d|F_A||^2-C |F_A|^3 \leq  C_\epsilon |F_A|^2.
$$
Substituting $f=|F_A|$ and $u=|F_A|^2$, we have
$$
-\frac{1}{2}\Delta f^2+\frac{(1-\epsilon)n}{n-1} |df^2|^2-Cuf\leq C_\epsilon f^2.
$$
A straightforward calculation now shows 
$$
-\Delta f + \left(\frac{(1-\epsilon)n}{n-1} -1\right)
    \left|\frac{df}{f}\right|^2-C u \leq C_\epsilon f.
$$
Choose $\epsilon$ so that $\displaystyle\alpha=\frac{(1-\epsilon)n}{n-1} -1 > 0$, and
    \eqref{SU-type inequality} follows. 
\end{proof}

\section{Hermitian-Yang-Mills connections over general complex manifolds}
\subsection{Improvement of the analytic results}
In this section, we will generalize Tian's holomorphic cycle theorem for
Hermitian-Yang-Mills connections over
K\"ahler manifolds  \cite[Thm.\ 4.3.3]{Tian:00}
to the case of Hermitian manifolds. More precisely, we fix $A_i$ to be a
sequence of HYM connections over an $m$-dimensional Hermitian manifold $(X, \omega)$ with $\|F_{A_i}\|\leq
C$. These are \emph{not} Yang-Mills connections in general. As before, let 
$$
\Sigma=\{x\in B: \lim_{r\rightarrow 0^{+}}\liminf_{i} r^{4-2m} \int_{B_x(r)} |F_A|^2 \geq \epsilon_0^2\}.
$$
Then we can assume 
\begin{itemize}
\item $\mu_i:=|F_{A_i}|^2\dVol \rightharpoonup
    \mu=|F_{A_\infty}|^2\dVol+\nu$ where $\text{supp}(\nu)$ is equal to the
        pure complex codimension $2$ part of $\Sigma$;
\item up to gauge transforms, $A_i$ sub-converges to $A_\infty$ outside $\Sigma$.
\end{itemize}

\begin{rmk}
Strictly speaking, without assuming the Hermitian-Einstein constant vanishes, i.e. $\sqrt{-1}\Lambda
    F_{A}=0$, HYM connections are not exactly $\Omega$-ASD instantons  in
    the  sense of \eqref{eqn:asd},  where $\Omega=\omega^{m-2}/(m-2)!$. But it is projectively $\Omega$-ASD connections in the sense that 
    $$*(F_A^{\perp}\wedge \Omega)=-F_A^{\perp}$$
     where $F_{A}^\perp=F_A-\mu \Id\omega$ satisfying $F_{A}^\perp \wedge
     \omega^{m-1}=0$. It is straightforward to see that the results for
     $\Omega$-YM connections holds for this case by using the same
     argument. There is another way to see this. By the Bochner-Kodaira-Nakano identity (see \cite[Theorem 1.1]{Demailly}), we have 
$$
d_A^*F_A=\rho F_A
$$
for some $\rho=\rho([\Lambda,\partial\omega], [\Lambda,\db \omega])$, for which the 
   same arguments as for $\Omega$-YM connections  apply. The results
    in the previous sections hold in this case.
\end{rmk}
The following can be deduced easily from  \cite[Thm.\ 2]{BandoSiu:94}.
\begin{prop}\label{extension}
\begin{enumerate}
\item  $E_\infty$ can be extended uniquely as a reflexive sheaf $\E_\infty$ over $M$. For any local section $s\in \E_\infty$, $\log^+|s|^2\in H^1_{loc}\cap L^\infty_{loc}$. Furthermore, $A_\infty$ can be extended to be defined over $M\setminus \Sing(\E_\infty).$ In particular, $\Tr(F_{A_\infty} \wedge F_{A_\infty})$ is closed across $\Sigma$, thus the current 
$$
c_2(\Sigma)=\lim_{j_i} \Tr(F_{A_{j_i}}\wedge F_{A_{j_i}})-\Tr(F_{A_\infty}\wedge F_{A_{\infty}})
$$
is closed.
\item $
\Sigma=\Sing(\E_\infty) \cup \cup_k \Sigma_k
$ is a complex subvariety of $M$ and  \begin{equation} \label{eqn:c2}
    c_2(\Sigma)=\sum m_k [\Sigma_k].
\end{equation} 
In particular, $\nu=\sum m_k \mathcal H^{2n-4}_{\Sigma_k}$ where $\Sigma_k$ are the irreducible pure codimension $2$ components of $\Sigma$ and 
\begin{equation}\label{Eqn7.4}
\mu_\infty=|F_{A_\infty}|^2\dVol+\sum_k m_k \mathcal{H}^{2n-4}_{\Sigma_k}.
\end{equation}
\end{enumerate}
\end{prop}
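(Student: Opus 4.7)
The plan is to assemble the proposition from three ingredients: (i) the Bando--Siu extension theorem from \cite{BandoSiu:94}, (ii) the calibration conclusion from Theorem \ref{Calibrated}, and (iii) a holomorphic chain (King) type theorem applied to the closed positive current built from the defect measure.

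First, for part (1), I would observe that $A_\infty$ is a smooth Hermitian--Yang--Mills connection on $E_\infty \to M \setminus \Sigma$ whose curvature is of type $(1,1)$ and lies in $L^2_{\mathrm{loc}}$, while $\Sigma$ is closed with locally finite $(2m-4)$-Hausdorff measure by Theorem \ref{SequentialCompactness}. Bando--Siu then applies verbatim to produce a unique reflexive coherent extension $\E_\infty$ on $M$, together with a singular Hermitian--Einstein metric whose local sections $s$ satisfy $\log^+|s|^2 \in L^\infty_{\mathrm{loc}} \cap H^1_{\mathrm{loc}}$. Reflexivity forces $\Sing(\E_\infty)$ to have complex codimension at least three, and $A_\infty$ extends as the Chern connection on $M \setminus \Sing(\E_\infty)$.

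Still in part (1), to see that $\Tr(F_{A_\infty}\wedge F_{A_\infty})$ is closed as a current on $M$, note that away from $\Sing(\E_\infty)$ it is closed as a smooth form. Its $L^1$-norm is controlled by the $L^2$-norm of $F_{A_\infty}$, and since $\Sing(\E_\infty)$ has real codimension at least six, standard extension-across-small-sets arguments produce no boundary term. The current $c_2(\Sigma)$ is then the weak limit of the closed forms $\Tr(F_{A_{j_i}}\wedge F_{A_{j_i}}) - \Tr(F_{A_\infty}\wedge F_{A_\infty})$, so it is closed and manifestly supported on $\Sigma$.

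For part (2), the core step is to identify $\Sigma$ as an analytic subvariety. At $\mathcal{H}^{2m-4}$-a.e.\ point $x$ of $\Sigma$ the approximate tangent plane $T_x\Sigma$ exists by Theorem \ref{thm:rectifiable}, and Theorem \ref{Calibrated} asserts that $\Omega_x = \omega_x^{m-2}/(m-2)!$ restricts to the volume form on $T_x\Sigma$. Wirtinger's inequality (the equality case of the K\"ahler calibration) then forces $T_x\Sigma$ to be a complex $(m-2)$-plane. Combined with the closed positive $(2,2)$-current of locally finite mass supported on the pure codimension-two part of $\Sigma$ obtained in part (1), King's theorem on rectifiable holomorphic chains yields a decomposition $c_2(\Sigma) = \sum_k m_k [\Sigma_k]$ with $\Sigma_k$ irreducible complex subvarieties of pure codimension two and $m_k \in \mathbb{Z}_{>0}$. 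Together with the already-analytic set $\Sing(\E_\infty)$ this realises $\Sigma$ as $\Sing(\E_\infty) \cup \bigcup_k \Sigma_k$. To read off \eqref{Eqn7.4}, I would compare $\nu$ with the mass of $c_2(\Sigma)$ at a generic point of $\Sigma_k$, where only one sheet passes through and the tangent cone is a flat complex plane of multiplicity $m_k$; the integrality of $\Theta^{n-4}/8\pi^2$ from Theorem \ref{Calibrated} then matches the density against $m_k$, giving $\nu = \sum_k m_k \mathcal{H}^{2m-4}_{\Sigma_k}$. The main obstacle will be this upgrade from rectifiability plus pointwise calibration to genuine analyticity: Tian's original argument in the K\"ahler case uses the closedness of $\Omega$ to regularise the calibrated current, which is unavailable here, so one must substitute the $\Omega$-Yang--Mills monotonicity together with tangent-cone-uniqueness at generic points of $\Sigma$ and verify that King's theorem applies to $c_2(\Sigma)$ without modification in the non-K\"ahler setting.
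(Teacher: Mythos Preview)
Your outline is essentially the paper's own argument: Bando--Siu for the reflexive extension and the smooth extension of $A_\infty$ across $M\setminus\Sing(\E_\infty)$; closedness of $\Tr(F_{A_\infty}\wedge F_{A_\infty})$ across the codimension-$\geq 6$ set $\Sing(\E_\infty)$ (the paper does this explicitly via Simpson's stratification/cut-off trick rather than invoking a black-box extension lemma); then Theorem~\ref{Calibrated} plus Wirtinger to force complex tangent planes, and King's theorem to write $c_2(\Sigma)=\sum m_k[\Sigma_k]$.

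Your one worry at the end is misplaced. You do \emph{not} need to regularise a calibrated current using closedness of $\Omega$ as in Tian's K\"ahler argument; the point is that $c_2(\Sigma)$ is already closed, since it is a weak limit of closed smooth forms minus the current $\Tr(F_{A_\infty}\wedge F_{A_\infty})$, whose closedness you established in part~(1) independently of $d\Omega$. So King's theorem applies directly to the closed, integral, $(2,2)$-current $c_2(\Sigma)$ with complex tangent planes, exactly as in the K\"ahler case. One step you gloss over and should make explicit is the inclusion $\Sing(\E_\infty)\subset\Sigma$: if $x\in\Sing(\E_\infty)\setminus\Sigma$ then $\Theta^{n-4}(\mu_\infty,x)<\epsilon_0^2$, so by $\epsilon$-regularity the $A_i$ converge smoothly near $x$ and $A_\infty$ is smooth there, contradicting $x\in\Sing(\E_\infty)$.
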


\begin{proof}
For (1), locally by replacing $\omega$ with any K\"ahler metric, it does
    not change the fact that $\|F_{A_\infty}\|_{L^2_{loc}}<\infty$ . By Theorem $2$ in
    \cite{BandoSiu:94}, 
    we know that $E_\infty$ can be extended 
    uniquely as a reflexive sheaf $\E_\infty$ over $M$. Furthermore, for
    any local section $s\in \E_\infty$, $\log^+|s|^2\in H^1_{loc}$. Then
    the local $L^\infty$ bound follows from Moser iteration. Given this, one can directly
    repeat the proof for Proposition $1$ in \cite{BandoSiu:94} to extend
    $A_\infty$ by extending the metric $H_\infty$ locally. Now we use
    Simpson's trick to show the closedness of $\Tr(F_{A_\infty} \wedge
    F_{A_\infty})$ (see \cite[p.\ 71]{Simpson:87}). 
    By proceeding with stratum of $\Sing(\E_\infty)$ which has codimension
    at least $6$, we can choose a point $x\in \Sing(\E_\infty)$ which is
    smooth at $x\in \Sing(\E_\infty)$. Let $\psi$ be a smooth $(n-5)$-form
    which is compactly supported near $x$.  
    \begin{itemize}
    \item Suppose $\psi$ has vanishing constant coefficients. We can choose a family of cut-off function $\phi_\epsilon$ which vanishes over an $\epsilon$-neighborhood of $x$ and $d(\phi_\epsilon\psi)$ is uniformly bounded. In particular, we have 
    $$
\int_{M} \Tr(F_{A_\infty} \wedge F_{A_\infty}) \wedge d\psi  =\lim_{\epsilon\rightarrow 0}\int_{M} \Tr(F_{A_\infty} \wedge F_{A_\infty}) \wedge d(\phi_\epsilon \psi)=0.    
    $$
    \item In general, since $\Sing(\E_\infty)$ has codimension at least $6$, we know that 
    $
\psi=\sum_i dx_i\wedge \omega_i    
    $,
    where $x_i$ are defining coordinates for $\Sing(\E_\infty)$. Now $\psi-\sum_id(x_i \omega_i)$ vanishes along $\Sing(\E_\infty)$ and satisfies $d(\psi-\sum_id(x_i \omega_i))=d\psi$. By the special case above, we know 
    $$\int_{M} \Tr(F_{A_\infty} \wedge F_{A_\infty}) \wedge d\psi=0.$$
\end{itemize}     
Now we prove $(2)$. We first show $\Sing(\E_\infty) \cup \cup_k\Sigma_k \subset \Sigma$. From the above, we know
    $\Sing(A_\infty)\subset\Sing(\E_\infty)$. It remains to
    show that $\text{Supp}(\nu)$ is a pure codimension $2$ subvariety of
    $M$. Indeed, we know $\Sigma$ is calibrated by $\omega^{m-2}/(m-2)!$, which
    implies $T_x\Sigma$ is a complex analytic subspace of $T_xM$.  Given
    this, it follows from part (1) and Theorem \ref{Calibrated}
    that $c_2(\Sigma)$ is a closed integral current. Then by 
     King's theorem  \cite{King:71} we can express
     $c_2(\Sigma)$ in the form \eqref{eqn:c2}
for some integers $m_k$ and pure codimension $2$ subvarieties $\Sigma_k$ of $M$. 
    This implies 
$
\Sigma \subset \Sing(\E_\infty) \cup \cup_k\Sigma_k.
$,
through which the top pure codimension $2$ parts are identified. For the
    other direction, suppose not, there exists a point $x\in
    \Sing(\E_\infty)$ with $\Theta^{n-4}(\mu_\infty, x)=0$. As Theorem
    \ref{Sequential Compactness}, we can conclude that
    $r^{4-2n}\mu_i(B_r(x))<\epsilon_0/2$, for $i$ large and $r$ small. This implies that $A_i$ sub-converges to $A_\infty$ smoothly near $x$, which gives a contradiction. In sum, we have 
$
\Sigma = \Sing(\E_\infty) \cup \cup_k\Sigma_k
$.
\end{proof}

\begin{rmk}
\begin{itemize}
    \item It follows by exactly the same argument that Proposition
        \ref{extension} (1) holds for general admissible
        Hermitian-Yang-Mills connections over complex Hermitian manifolds,
        i.e.\ smooth Hermitian-Yang-Mills connections defined away from a closed Hausdorff codimension $4$ set.

\item It is straightforward to see that the proof for the closedness part
    holds for general finite energy $\Omega$-YM connections with mild
        singularities; for example, when the singular set can be stratified
        by smooth manifolds of real codimension at least $6$. In general,
        it is conjectured that the set of essential singularities of finite
        energy $\Omega$-ASD instantons when $\Omega$ is closed has Hausdorff codimension at least $6$ (see \cite{Tian:00}).
\end{itemize}
\end{rmk}

\begin{cor}\label{LimitingBundleOfHE}
As a smooth bundle, $E_\infty|_{M\setminus \Sigma}\cong E|_{M\setminus
    \Sigma}$. In particular, we can assume there exists a sequence of
    bundle isometries $\Phi_{j_i}: E_{\infty} \rightarrow E|_{M\setminus
    \Sigma}$ so that $\Phi_{j_i}^*A_{j_i}$ locally converges to $A_\infty$ smoothly away from $\Sigma$.
\end{cor}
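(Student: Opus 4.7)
The key new input in the Hermitian-Yang-Mills setting, absent from the general framework of Theorem \ref{SequentialCompactness}, is Proposition \ref{extension}: the bubbling set $\Sigma$ is a complex analytic subvariety of $M$ of complex codimension at least two, i.e.\ real codimension at least four. This tight topological control is precisely what allows the local isometries produced by Theorem \ref{SequentialCompactness} to be patched into a global one.

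My plan is first to extend $E_\infty$ from $M\setminus\Sigma$ to a smooth Hermitian bundle $\widetilde E_\infty$ on all of $M$. By Proposition \ref{extension}(1), $E_\infty$ already admits a reflexive coherent extension $\mathcal E_\infty$ on $M$, and the singular locus $\Sing(\mathcal E_\infty)$ has complex codimension $\geq 3$ (real codimension $\geq 6$). Thus $\mathcal E_\infty$ is locally free on $M\setminus\Sing(\mathcal E_\infty)$, giving a smooth Hermitian bundle there, which then extends across $\Sing(\mathcal E_\infty)$ by standard obstruction theory: the obstructions live in $H^k(M,M\setminus\Sing(\mathcal E_\infty);\pi_{k-1}(BU(r)))$, vanishing for $k<6$ by codimension and for $k=6$ by $\pi_5(BU(r))=0$ in the stable range, while finer obstructions are controlled by the Chern-class data carried by the curvature of $A_\infty$.

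Next I would show $\widetilde E_\infty\cong E$ as smooth Hermitian bundles on $M$. They have the same rank and, on the dense open set $M\setminus\Sigma$, are locally isometric by Theorem \ref{SequentialCompactness}. Chern-Weil applied to $A_\infty$, together with the closedness of $\tr(F_{A_\infty}\wedge F_{A_\infty})$ across $\Sigma$ (Proposition \ref{extension}(1)) and the integral-current structure of $c_2(\Sigma)$, gives equality of all Chern classes in $H^\ast(M;\mathbb Z)$. Combining this with the local isometry on $M\setminus\Sigma$ and the real-codimension-four bound on $\Sigma$ (which kills the low-degree obstructions to globalization) yields a smooth Hermitian bundle isomorphism $\widetilde E_\infty\to E$, and restricting to $M\setminus\Sigma$ produces a global smooth isometry $\Psi:E_\infty\to E|_{M\setminus\Sigma}$.

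To finish, I would produce the sequence $\Phi_{j_i}$ by combining $\Psi$ with a global gauge-fixing. Pulling back by $\Psi$, it suffices to find gauge transformations $g^{j_i}$ of $E_\infty$ on $M\setminus\Sigma$ with $(g^{j_i})^\ast(\Psi^\ast A_{j_i})\to A_\infty$ smoothly on every compact subset. Fix a countable exhaustion $K_1\Subset K_2\Subset\cdots$ of $M\setminus\Sigma$. For each $\ell$, Theorem \ref{SequentialCompactness} supplies a local gauge transformation $g_\ell^{j_i}$ on $K_\ell$ with the corresponding local convergence; a diagonal extraction combined with Uhlenbeck gauge-fixing to patch across overlaps, and with the compactness of the stabilizer of $A_\infty$ to control the patching discrepancies, yields a single global $g^{j_i}$, and the bundle isometries $\Phi_{j_i}:=\Psi\circ g^{j_i}$ have the stated convergence property. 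The main obstacle is the extension-and-classification step: in higher dimensions and higher rank, smooth bundle isomorphism is not determined by Chern classes alone, so one must genuinely use the conjunction of the reflexive extension, the local isomorphism on $M\setminus\Sigma$, and the codimension-four bound in order to eliminate all residual obstructions.
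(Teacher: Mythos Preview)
Your Step 2 has a genuine gap. Equality of Chern classes in $H^*(M;\mathbb{Z})$ does \emph{not} imply that $\widetilde E_\infty$ and $E$ are isomorphic as smooth bundles, and the additional ingredient you invoke---``local isometry on $M\setminus\Sigma$''---adds nothing beyond equality of rank: the isometries $\Phi_K^{j_i}$ on compact $K\subset M\setminus\Sigma$ supplied by Theorem \ref{SequentialCompactness} are not known to be mutually compatible, which is precisely the problem you are trying to solve. You flag this yourself as ``the main obstacle'' but do not actually resolve it; for instance, in complex dimension $m\geq 4$ there exist non-isomorphic rank-$2$ unitary bundles with identical Chern classes. (Step 1 has a smaller gap of the same flavor: $\pi_5(BU(2))=\pi_4(U(2))\cong\mathbb{Z}/2$, so extending across a set of real codimension $6$ is not automatic when $r=2$.)

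The paper's route---indicated in the remark after Theorem \ref{SequentialCompactness} and left implicit as a corollary of Proposition \ref{extension}---bypasses both difficulties and is much shorter. Because $\Sigma$ is a complex analytic subvariety of the compact manifold $M$, it admits a neighborhood with a conic (mapping-cylinder) structure over $\Sigma$; consequently $M\setminus\Sigma$ deformation retracts onto a compact subset $K_0\subset M\setminus\Sigma$. Theorem \ref{SequentialCompactness} already furnishes an isometry $E_\infty|_{K_0}\cong E|_{K_0}$, and homotopy invariance of vector bundles on paracompact bases promotes this to a global isometry $E_\infty\cong E|_{M\setminus\Sigma}$. For the sequence $\Phi_{j_i}$ one simply extends each $\Phi_{K_0}^{j_i}$ across the retraction and runs a standard exhaustion/diagonal argument; the Uhlenbeck patching and stabilizer-compactness manoeuvres in your Step 3 are not needed. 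In particular, no extension of $E_\infty$ across $\Sigma$ and no bundle classification on $M$ enter at all.
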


Given this, let $E$ be a Hermitian bundle over a compact Hermitian manifold $(M, \omega)$. Denote $\overline{M_{HYM, c}}$ to be the space of limits of smooth Hermitian-Yang-Mills connections on $E$ with $L^2$ norm of curvature bounded by $c$ \emph{mod gauge} (smooth wherever the connections are smooth). We give $\overline{M_{HYM,c}}$ a topology by specifying a basis of open neighborhood as $\mathcal{U}_{\vec{\epsilon},\phi}([A, \mu])$ consisting of $[(A', \mu')]\in \overline{M_{HYM, c}}$ satisfying 
\begin{itemize}
\item $A'$ lies in the $\epsilon_1$ neighborhood of $A$ outside a $\epsilon_1$ neighborhood of $\pi(\mu)$;
\item $|\mu(\phi)-\mu'(\phi)|<\epsilon_2$.
\end{itemize}
Here $\vec{\epsilon}=(\epsilon_1, \epsilon_2)$ with $\epsilon_i>0$ for $i=1,2$ and $\phi$ is a continuous and bounded function. 
\begin{rmk}
When $m=2$, this topology coincides exactly with the topology in the case of four dimensional manifolds (see \cite[Section $4.4$]{DonaldsonKronheimer:90}).
\end{rmk}
Given this,  we have the following improved version of Theorem \ref{Sequential Compactness}
\begin{thm}\label{Theorem: c-HYM}
$\overline{M_{HYM,c}}$ is a first countable sequentially compact Hausdorff space.
\end{thm}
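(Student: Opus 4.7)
The plan is to verify the three properties---sequential compactness, first countability, and the Hausdorff property---in that order.

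For sequential compactness, I would lift any sequence $\{[(A_i,\mu_i)]\}$ in $\overline{M_{HYM,c}}$ to representatives in $\overline{\mathcal A_{\Omega,c}}$ and apply Theorem \ref{Sequential Compactness} to extract a sub-convergent subsequence with limit $(A_\infty,\mu_\infty)$. The crucial new ingredient in the HYM case is Corollary \ref{LimitingBundleOfHE}: the limiting bundle $E_\infty$ is globally isometric to $E|_{M\setminus\pi(\mu_\infty)}$, so after a further gauge transformation $A_\infty$ may be realized as a HYM connection on $E$ defined off $\pi(\mu_\infty)$, giving $[(A_\infty,\mu_\infty)] \in \overline{M_{HYM,c}}$. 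Weak convergence of Radon measures takes care of $|\mu_i(\phi)-\mu_\infty(\phi)| \to 0$, while $C^\infty_{\mathrm{loc}}$ convergence of $A_i$ to $A_\infty$ off $\pi(\mu_\infty)$---combined with the inclusion $\limsup_i \pi(\mu_i) \subset \pi(\mu_\infty)$ established in the proof of Theorem \ref{Sequential Compactness}---handles the first clause of $\mathcal U_{\vec\epsilon,\phi}$.

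For first countability, I would use separability of $C^0(M)$ in the uniform norm to pick a countable dense family $\{\phi_n\}$. At a point $[A,\mu]$, the countable family of finite intersections
\[
    \mathcal B([A,\mu]) := \Bigl\{ \, \bigcap_{j=1}^{N} \mathcal U_{(1/k_j,1/k_j),\phi_{n_j}}([A,\mu]) \, : \, N,k_j,n_j \in \mathbb N \, \Bigr\}
\]
refines every basic open set $\mathcal U_{\vec\epsilon,\psi}([A,\mu])$: using the uniform mass bound $\mu'(M)\le c$ on $\overline{M_{HYM,c}}$, approximate $\psi$ by some $\phi_n$ with $\|\phi_n-\psi\|_\infty < \epsilon_2/(2c+2)$ and choose $1/k < \min(\epsilon_1,\epsilon_2/2)$.

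The Hausdorff property is the most delicate step and is where I expect the main obstacle. Suppose $[A,\mu]$ and $[A',\mu']$ cannot be separated; the countable basis just produced then yields a single sequence $\{[(A_i,\mu_i)]\}$ converging simultaneously to both. Testing against a dense countable set of $\phi \in C^0(M)$ forces $\mu = \mu'$, hence $\pi(\mu)=\pi(\mu')$. On $M\setminus\pi(\mu)$ we obtain gauge transformations $u_i,v_i$ with $u_i^\ast A_i \to A$ and $v_i^\ast A_i \to A'$ in $C^\infty_{\mathrm{loc}}$. Setting $h_i := v_i u_i^{-1}$, the relation
\[
    h_i^{-1} d h_i \;=\; h_i^\ast(u_i^\ast A_i) \,-\, (u_i^\ast A_i)
\]
combined with the pointwise bound $h_i(x) \in U(\rank E)$ yields uniform $W^{1,\infty}_{\mathrm{loc}}$ control; bootstrapping via elliptic regularity and extracting a subsequence by diagonalization over an exhaustion of $M\setminus\pi(\mu)$ produces a smooth limit $h_\infty$ with $h_\infty^\ast A = A'$ on $M\setminus\pi(\mu)$. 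Since gauge equivalence in the definition of $\overline{M_{HYM,c}}$ is imposed only where the connections are smooth, $h_\infty$ realizes the equivalence $[(A,\mu)]=[(A',\mu')]$, contradicting the assumption.

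The principal obstacle is this last step: one must control the transition gauges $h_i$ simultaneously on a countable exhaustion of $M\setminus\pi(\mu)$ and extract a coherent global limit. The HYM setting is essential here: only in this case does Corollary \ref{LimitingBundleOfHE} provide a \emph{global} bundle isometry with $E$, so that the gauge equivalence can be formulated on a single trivialization of $E|_{M\setminus\pi(\mu)}$ rather than merely on each compact piece, and only here does Proposition \ref{extension}(2) guarantee that $\pi(\mu)$ is a complex subvariety of codimension at least $2$, providing the regularity near the singular set that the limiting-gauge construction tacitly requires.
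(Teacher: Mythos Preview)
Your proposal is correct and, since the paper states Theorem~\ref{Theorem: c-HYM} without proof (treating it as a direct consequence of Theorem~\ref{Sequential Compactness}, Proposition~\ref{extension}, and Corollary~\ref{LimitingBundleOfHE}), you are supplying precisely the details the authors leave implicit. The structure you outline---sequential compactness via the diagonal argument of Theorem~\ref{Sequential Compactness} together with the global bundle isometry from Corollary~\ref{LimitingBundleOfHE}, first countability from separability of $C^0(M)$ and the uniform mass bound $\mu(M)\le c$, and Hausdorffness by extracting a limiting gauge transformation from the transition gauges $h_i$---is exactly what one reconstructs from the paper's hints and from the four-dimensional analogue in \cite[Section~4.4]{DonaldsonKronheimer:90} to which the paper explicitly refers.

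One small correction: the displayed identity $h_i^{-1}dh_i = h_i^\ast(u_i^\ast A_i) - (u_i^\ast A_i)$ is not literally true, since for any connection $B$ one has $h^\ast B - B = h^{-1}dh + (h^{-1}Bh - B)$. This does not affect your argument: the extra conjugation term $h^{-1}Bh - B$ is bounded pointwise by $2|B|$ because $h_i$ is unitary, so $h_i^{-1}dh_i$ is still uniformly bounded on compacta of $M\setminus\pi(\mu)$ and the bootstrap to $C^\infty_{\mathrm{loc}}$ convergence of $h_i$ goes through exactly as you describe.
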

By Proposition \ref{extension}, the moduli space can be also viewed as consisting of pairs $(A_\infty, \mathcal{C}^{an})$ mod gauge where $\mathcal{C}^{an}=\sum_k m_k \Sigma_k$ is a integer linear combination of pure codimension two subvarities of $X$. Later we will not make a difference between them. 

\subsection{HYM connections over a class of balanced manifolds of Hodge-Riemann type}\label{Section:Hodge-Riemann} 
Now we assume $(M,\omega)$ is an $m$-dimensional compact balanced
Hermitian manifold of Hodge-Riemann type as defined in \cite[Def.\ 2.7]{ChenWentworth:21b}. This means we can write
$$
\omega^{m-1}=\omega_0\wedge\Omega_0
$$
where $\omega_0$ is a strictly positive $(1,1)$ form, $\Omega_0$ is of type
$(m-2,m-2)$, and
\begin{enumerate}
\item $d\omega^{m-1}=0$;
\item $d\Omega_0=0$;
\item for any $p+q=2$, there exists a pointwise $Q$-orthogonal decomposition 
$$\Lambda^{p,q}=\C \omega_0\oplus P^{p,q}$$
where $P^{p,q}=\{\alpha\in \Lambda^{p,q}: \alpha \wedge \omega_0 \wedge \Omega_0= 0\};$
\item  $Q(\alpha, \beta):=(\sqrt{-1})^{p-q} (-1)^{\frac{(p+q)(p+q-1)}{2}}
*(\alpha \wedge \overline{\beta}\wedge \Omega_0)$ is positive definite on $P^{p,q}$. 
\end{enumerate}
In this case, a uniform bound 
for the $L^2$ norm of curvature of all the smooth irreducible 
Hermitian-Yang-Mills connections is automatic by the following observation.
\begin{lem}
Given any HYM connection $A$ on $E$, 
$$
\int_{X} |F_A|^2\frac{\omega^{m}}{m!}\leq C
$$
where $C=C(c(E), \omega_i)$.
\end{lem}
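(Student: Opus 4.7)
The strategy is to split $F_A$ into a component along $\omega_0$ and a Hodge-Riemann primitive part, then bound each using the HYM condition and the positivity of $Q$ on $P^{1,1}$.

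First, I would use the pointwise decomposition $\Lambda^{1,1}(\End E) = \omega_0\cdot \End E \oplus P^{1,1}(\End E)$ coming from condition $(3)$ to write
$$F_A = \phi\,\omega_0 + G_A,$$
with $\phi$ a section of $\End E$ and $G_A \in P^{1,1}(\End E)$, i.e.\ $G_A \wedge \omega_0 \wedge \Omega_0 = 0$. The HYM condition $\sqrt{-1}\Lambda_\omega F_A = \mu\,\Id_E$ (with $\mu$ determined by $c_1(E)$, $[\omega^{m-1}]$ and $\rank E$) combined with $\omega^{m-1}=\omega_0\wedge\Omega_0$ gives, after wedging with $\omega^{m-1}$ and using $G_A\wedge\omega_0\wedge\Omega_0=0$,
$$\phi\,\omega_0^2\wedge\Omega_0 = \tfrac{\mu}{m}\,\omega^m\cdot\Id_E.$$
Since $\omega_0^2\wedge\Omega_0$ and $\omega^m$ are both strictly positive top forms on the compact manifold $X$, this forces $\phi = h(x)\,\Id_E$ with $h$ a smooth function having a uniform $L^\infty$ bound in terms of $\mu$ and the background geometry. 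In particular $\int_X |\phi\,\omega_0|^2\,\omega^m/m!$ is bounded by a constant depending only on $c_1(E)$ and the $\omega_i$'s.

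Next, I would apply the Hodge-Riemann positivity (condition $(4)$) component-wise to $G_A$. For a scalar $\alpha\in P^{1,1}$ one has $Q(\alpha,\alpha)\,\omega^m/m! = -\alpha\wedge\overline{\alpha}\wedge\Omega_0 > 0$; by compactness of $X$ and of the Grassmannian in $P^{1,1}$, the $Q$-norm and the pointwise hermitian norm are uniformly comparable. Summing over an orthonormal basis of $\End E$ and using that $A$ is unitary, so $G_A^* = -G_A$, yields
$$\int_X |G_A|^2\,\frac{\omega^m}{m!}\;\leq\; C\int_X \tr(G_A \wedge G_A)\wedge \Omega_0,$$
with $C$ depending only on the Hodge-Riemann data $(\omega_0,\Omega_0)$.

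It then remains to show the right-hand side is controlled by topological data. Expanding $G_A = F_A - \phi\,\omega_0$,
$$\tr(G_A\wedge G_A) \;=\; \tr(F_A\wedge F_A)\;-\;2\,\omega_0\wedge\tr(\phi\,F_A)\;+\;\tr(\phi^2)\,\omega_0^2,$$
and wedging with the \emph{closed} form $\Omega_0$ and integrating: the first term becomes a Chern-Weil integral depending only on $c_1(E),c_2(E)$ and the cohomology class $[\Omega_0]$; the cross term, using $\omega_0\wedge\Omega_0 = \omega^{m-1}$ together with $\sqrt{-1}\Lambda_\omega\tr F_A = r\mu$, reduces to the integral of $h(x)$ against a fixed top form and is bounded by the $L^\infty$ bound on $h$; the last term is pointwise bounded since both $h$ and $\omega_0$ are. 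Combining with $|F_A|^2 \leq 2(|\phi\,\omega_0|^2 + |G_A|^2)$ and the bound from step one gives the desired $C = C(c(E),\omega_i)$. The main obstacle is the component-wise Hodge-Riemann step: one must verify that the finite-dimensional positivity of $Q$ on $P^{1,1}$ passes to an $\End E$-valued, uniform-in-$x$ inequality compatible with anti-hermiticity; once this is set up, everything else is a Chern-Weil bookkeeping calculation.
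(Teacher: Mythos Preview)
Your proposal is correct and follows essentially the same approach as the paper: decompose $F_A$ into its $\omega_0$-component (which the HYM equation forces to be a scalar multiple of $\Id_E$) and its $P^{1,1}$-primitive part, use the Hodge--Riemann positivity of $Q$ on $P^{1,1}$ together with anti-hermiticity of $F_A$ to control $|G_A|^2$ by $\tr(G_A\wedge G_A)\wedge\Omega_0$, and then invoke $d\Omega_0=0$ so that $\int_X\tr(F_A\wedge F_A)\wedge\Omega_0$ is a Chern--Weil number. The paper's argument is just a compressed version of yours: it writes the inequality directly in terms of $\tr(F_A\wedge F_A)\wedge\Omega_0$ and an $|f|^2$-term, absorbing your cross terms (which, as you observe, reduce via $\omega_0\wedge\Omega_0=\omega^{m-1}$ and the traced HYM equation to integrals of bounded functions) into the constant.
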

\begin{proof}
    By conditions (3) and (4) we have 
$$
\int_{X} |F_A|^2\frac{\omega^{m-1}}{(m-1)!}\leq C_1 (\int_X \Tr( F_A\wedge F_A)\wedge \Omega_0 + C_2\int_{X} |f|^2\omega_0\wedge \omega_0 \wedge \Omega_0 )
$$
where $F_A^\perp=F_A-f \Id\omega_0$. Here 
    $$f=\mu \frac{\frac{\omega^n}{n!}}{\omega_0\wedge \omega_0 \wedge \Omega_0}$$ In particular,  we have 
\begin{equation}
\int_{X} |F_A|^2\frac{\omega^{n}}{n!}\leq C_1(\int_X F_A \wedge F_A \wedge \Omega_0 + C_2 \mu^2 \int_X \frac{\frac{\omega^n}{n!}}{\omega_0\wedge \omega_0 \wedge \Omega_0} \frac{\omega^n}{n!}).
\end{equation}
The result follows.
\end{proof}
 In this case, we denote the compactification of the moduli space of HYM connections mod gauge as $\overline{M_{HYM}}$ by choosing $c$ large.
\begin{thm}\label{HEKahler}
On a unitrary bundle over a compact balanced Hermitian manifold $(X,\omega)$ of Hodge-Riemann type, $\overline{M_{HYM}}$ is a first countable sequentially compact Hausdorff space.
\end{thm}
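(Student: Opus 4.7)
The plan is to reduce Theorem \ref{HEKahler} to Theorem \ref{Theorem: c-HYM} by exhibiting a single a priori constant $c_0$, depending only on $c(E)$ and the cohomology data of $\omega_0$ and $\Omega_0$, such that $\|F_A\|_{L^2}\leq c_0$ for every smooth Hermitian-Yang-Mills connection $A$ on $E$. Once such a $c_0$ exists, every smooth HYM connection on $E$ lies in $\mathcal{A}_{\Omega,c}$ for any $c\geq c_0$, so the set of smooth HYM connections modulo gauge embeds into $\overline{M_{HYM,c}}$, and its closure $\overline{M_{HYM}}$ (in the topology defined by the neighborhoods $\mathcal{U}_{\vec\epsilon,\phi}$) coincides with $\overline{M_{HYM,c}}$ as a topological space.

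First I would invoke the lemma immediately preceding the theorem: the Hodge-Riemann decomposition $\Lambda^{1,1}=\mathbb{C}\omega_0\oplus P^{1,1}$, together with the positivity of $Q$ on $P^{1,1}$, gives the pointwise estimate
\begin{equation*}
|F_A^\perp|^2\,\tfrac{\omega^{m-1}}{(m-1)!}\;\leq\; C_1\,\tr(F_A\wedge F_A)\wedge\Omega_0 + C_2|f|^2\,\omega_0\wedge\omega_0\wedge\Omega_0,
\end{equation*}
where $F_A^\perp=F_A-f\,\Id\,\omega_0$. Integrating and using that $d\omega^{m-1}=0$ and $d\Omega_0=0$, the first term on the right becomes a topological pairing determined by $c_2(E)$, $c_1(E)^2$ and the de Rham class $[\Omega_0]$, while the trace part is controlled by the Hermitian-Einstein constant $\mu$, which is fixed by $c_1(E)$ and the metric. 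This produces the desired uniform bound $\|F_A\|_{L^2}\leq c_0$.

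Given this, I choose $c\geq c_0$, so that every smooth HYM connection on $E$ lies in $\mathcal{A}_{\Omega,c}$, whence $M_{HYM}\subseteq M_{HYM,c}$ and the closures agree. By Theorem \ref{Theorem: c-HYM}, $\overline{M_{HYM,c}}$ is a first countable sequentially compact Hausdorff space in the topology generated by the neighborhoods $\mathcal{U}_{\vec\epsilon,\phi}$. Since these neighborhoods are defined identically on $\overline{M_{HYM}}$, the same conclusion transfers verbatim.

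The only place where any real work is done is the a priori $L^2$ bound, which is exactly the content of the Hodge-Riemann hypothesis: without it, the space of smooth HYM connections on a fixed bundle over a general Hermitian manifold need not even have bounded curvature, and the Uhlenbeck-type machinery built in the previous sections does not apply. With the bound in hand, the theorem is a direct corollary of Theorem \ref{Theorem: c-HYM}, and no further analytic input is required.
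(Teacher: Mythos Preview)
Your proposal is correct and follows essentially the same route as the paper: the paper proves the a priori $L^2$ bound on $F_A$ as the lemma immediately preceding Theorem~\ref{HEKahler}, then defines $\overline{M_{HYM}}$ by choosing $c$ large enough that every smooth HYM connection lies in $\mathcal A_{\Omega,c}$, so that the result is a direct consequence of Theorem~\ref{Theorem: c-HYM}. No additional argument appears in the paper beyond what you have outlined.
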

Now we would like to give an  important class of balanced metrics of
Hodge-Riemann type, which comes from \emph{multipolarizations}. Namely, for any positive $(1,1)$ forms $\omega_0, \cdots, \omega_{m-2}$ on a compact complex manifold $X$ so that 
\begin{align}
    \begin{split} \label{eqn:multiporizations}
        \frac{\omega^{m-1}}{(m-1)!}&=\omega_0\wedge \cdots\wedge
 \omega_{m-1}\\
        d(\omega_0\wedge \omega_1 \wedge \cdots\wedge \omega_{m-2})&=0\\
        d(\omega_1 \wedge \cdots\wedge \omega_{m-2})&=0
    \end{split}
\end{align}
then 
by the main result in \cite{Timorin:98}
we get a balanced Hermitian metric $\omega$ of Hodge-Riemann type 
by setting $\Omega_0=\omega_1 \wedge \cdots\wedge \omega_{m-2}$.
\begin{cor}
On a unitrary bundle over a compact balanced Hermitian manifold $(X,\omega)$ satisfying (\ref{eqn:multiporizations}), $\overline{M_{HYM}}$ is a first countable sequentially compact Hausdorff space.
\end{cor}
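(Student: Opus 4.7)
The plan is to deduce the corollary from Theorem~\ref{HEKahler} by verifying that the Hermitian metric $\omega$ determined by the multipolarization is balanced of Hodge-Riemann type in the sense of Section~\ref{Section:Hodge-Riemann}. So given positive $(1,1)$ forms $\omega_0,\ldots,\omega_{m-2}$ satisfying \eqref{eqn:multiporizations}, I would set $\Omega_0 := \omega_1 \wedge \cdots \wedge \omega_{m-2}$ and verify the four conditions (1)--(4) of the Hodge-Riemann type definition.

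First I would confirm that $\omega$ is a well-defined positive $(1,1)$ form. By Timorin's theorem (and classically Michelsohn), the strictly positive $(m-1,m-1)$-form $\omega_0 \wedge \cdots \wedge \omega_{m-2}$ admits a unique positive $(1,1)$-form $\omega$ as its $(m-1)$-th root (up to the factor $(m-1)!$), which is smooth and pointwise Hermitian. With $\omega$ in hand, conditions (1) and (2) are then immediate from the hypotheses: $d\omega^{m-1}=0$ is the first equation of \eqref{eqn:multiporizations}, and $d\Omega_0 = d(\omega_1 \wedge \cdots \wedge \omega_{m-2}) = 0$ is the third. (The middle equation in \eqref{eqn:multiporizations}, $d(\omega_0 \wedge \Omega_0)=0$, is redundant given the other two, but it is also equivalent to (1).) Hence $\omega$ is a balanced metric.

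The bulk of the work is in verifying conditions (3) and (4), which are pointwise statements at each $x\in X$ about the linear algebra of the fiber $\Lambda^{p,q}T_x^*X$ with the data $(\omega_0,\Omega_0)$. This is exactly the content of Timorin's mixed Hodge-Riemann bilinear relations \cite{Timorin:98}: viewing $L_{\omega_0}=\omega_0\wedge\cdot$ as the Lefschetz operator and $\Omega_0 = \omega_1 \wedge \cdots \wedge \omega_{m-2}$ as a mixed polarizing form built from positive $(1,1)$-forms, Timorin proves both the pointwise Lefschetz-type decomposition $\Lambda^{p,q} = \C\,\omega_0 \oplus P^{p,q}$ in bidegree $p+q=2$ (with $P^{p,q}$ the kernel of $\alpha\mapsto \alpha\wedge\omega_0\wedge\Omega_0$), and the sign property that the Hermitian form $Q$ is positive definite on the primitive subspace $P^{p,q}$. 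The $Q$-orthogonality of the decomposition is a formal consequence of $L_{\omega_0}$ being self-adjoint for $Q$. This yields (3) and (4).

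With all four conditions verified, $(X,\omega)$ is a compact balanced Hermitian manifold of Hodge-Riemann type, and Theorem~\ref{HEKahler} applies to give that $\overline{M_{HYM}}$ is a first countable sequentially compact Hausdorff space. The main obstacle is purely one of bookkeeping, namely matching the precise formulation of Timorin's linear-algebraic theorem to the definition used in Section~\ref{Section:Hodge-Riemann} (in particular the normalization of $Q$ with its signs and the identification of the primitive subspace); no new analytic input beyond Theorem~\ref{HEKahler} is required.
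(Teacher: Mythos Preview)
Your proposal is correct and follows essentially the same route as the paper: the paper states (in the paragraph immediately preceding the corollary) that by setting $\Omega_0=\omega_1\wedge\cdots\wedge\omega_{m-2}$ and invoking Timorin's mixed Hodge--Riemann relations \cite{Timorin:98}, one obtains a balanced metric of Hodge--Riemann type, whereupon Theorem~\ref{HEKahler} applies. Your write-up simply makes explicit the verification of conditions (1)--(4), which is exactly the intended argument.
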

In particular, this gives the following 
\begin{cor}
On a unitrary bundle over a compact K\"ahler manifold $(X,\omega)$, $\overline{M_{HYM}}$ is a first countable sequentially compact Hausdorff space.
\end{cor}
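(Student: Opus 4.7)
The plan is to derive the corollary as an immediate specialization of the preceding corollary (or, equivalently, of Theorem \ref{HEKahler}), by exhibiting a K\"ahler metric as a particular instance of a balanced metric of Hodge-Riemann type coming from multipolarizations. Since $(X,\omega)$ is K\"ahler, $d\omega=0$ and consequently every wedge power $\omega^k$ is closed.

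Concretely, I would set
$$
\omega_0=\omega_1=\cdots=\omega_{m-2}=c\,\omega,\qquad c=((m-1)!)^{-1/(m-1)}.
$$
Then $\omega_0\wedge\cdots\wedge\omega_{m-2}=c^{m-1}\omega^{m-1}=\omega^{m-1}/(m-1)!$, so the first equation of \eqref{eqn:multiporizations} is satisfied. The closedness conditions $d(\omega_0\wedge\cdots\wedge\omega_{m-2})=0$ and $d(\omega_1\wedge\cdots\wedge\omega_{m-2})=0$ both follow at once from $d\omega=0$. Thus the hypotheses of the preceding corollary hold, and applying it gives the desired compactness, first countability, and Hausdorff property of $\overline{M_{HYM}}$.

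An alternative, more conceptual route would bypass the multipolarization corollary and appeal to Theorem \ref{HEKahler} directly: one checks that a K\"ahler $\omega$ is balanced of Hodge-Riemann type with $\omega_0=c\omega$ and $\Omega_0=c^{m-2}\omega^{m-2}$, where the nondegeneracy and signature conditions (3), (4) in Section \ref{Section:Hodge-Riemann} are the classical pointwise Hodge-Riemann bilinear relations for K\"ahler forms, and conditions (1), (2) reduce again to $d\omega=0$. Either route reduces the statement to a computation one has already built up by the time the corollary is stated.

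There is essentially no obstacle here; the only point that needs verification is the combinatorial normalization of the constant $c$ so that the balanced metric produced by Timorin's construction from $(\omega_0,\ldots,\omega_{m-2})$ is literally the given K\"ahler form $\omega$ rather than a scalar multiple of it. This is a one-line check, so the corollary is a genuine corollary rather than an independent result.
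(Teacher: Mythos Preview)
Your proposal is correct and matches the paper's approach: the paper presents this corollary with no proof at all, simply writing ``In particular, this gives the following'' after the multipolarization corollary, so the intended argument is precisely the specialization $\omega_0=\cdots=\omega_{m-2}=c\,\omega$ that you spell out. Your alternative route via Theorem \ref{HEKahler} and the classical Hodge--Riemann relations is equally valid and amounts to the same verification.
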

\begin{rmk}
When $(X,\omega)$ is a projective algebraic manifold, i.e. $\omega=c_1[L]$ for some line bundle $L$, 
    it is known that $\overline{M_{HYM}^*}$, which denotes the closure of the space of irreducible HYM connections with \emph{fixed determinants} in $\overline{M_{HYM}}$, admits a complex structure coming from the algebraic geometric side. The induced complex structure makes it an algebraic space (see \cite{GSTW:18}). We will explain how it can be generalized to the case of multipolarizations in the following by using the same argument in \cite{GSTW:18} and the algebraic geometric results in \cite{GrebToma:17}. 
\end{rmk}

\subsection{$\overline{M_{HYM}^*}$ for multipolarizations}\label{Section:multipolarization}
In this section, we fix $(E,H)$ to be a unitary vector bundle over a compact complex Hermitian manifold $(X, \omega)$ so that 
$$
\frac{\omega^{m-1}}{(m-1)!}=\omega_0 \wedge \cdots\wedge \omega_{m-2}
$$
where $[\omega_i]$ are all ample classes, i.e. $[\omega_i]=c_1(L_i)$ for
some ample line bundles $L_i$. Set $\Omega_0=\omega_1\wedge \cdots\wedge \omega_{m-2}$. 
As mentioned above, we can view the moduli space $\overline{M_{HYM}^*}$ consisting of pairs $(A_\infty, \mathcal{C}^{an})$ mod gauge. It is a sequentially compact Hausdorff space. Using the argument in \cite{GSTW:18}, we briefly explain how a complex structure could be given to $\overline{M_{HYM}^*}$ to make it an algebraic space.

\subsubsection{Moduli space of semistable torsion free sheaves via multipolarizations}
In this section, we will recall the construction for the compactification of the moduli space of semistable sheaves with given numerical classes and fixed determinant. We refer the readers to \cite{GrebToma:17} for more details. Recall that the space of slope semistable sheaves having the same Chern classes as $E$ over $(X,\omega)$ is bounded, i.e. if we fix $\O(1)$ to be any polarization of $X$, for fixed $k$ large enough, for any $\E$, we have 
$
H^i(X, \E(k))=0
$,
for $i>1$, and $\E(k)$ is globally generated. Let 
$$
\mathcal H=\C^{\oplus \tau(k)} \otimes \O(-k)
$$
where $\tau$ denotes the Hilbert polynomial of $\E$. Now we know for $k$ fixed large enough, all such sheaves can be viewed as points $[q: \mathcal H \rightarrow \E]$ in $Quot(\mathcal H, \tau)$ by choosing an isomorphism $\C^{\oplus \tau(k)} \cong H^0(X,\E(k))$. Here $Quot(\mathcal H, \tau)$ denotes the space of points given by surjective maps
$
q: \mathcal H \rightarrow \E
$,
where the Hilbert polynomial of $\E$ is equal to $\tau_E$,  modulo the equivalence: $q: \mathcal H \rightarrow \E$ and $q': \mathcal H \rightarrow \E'$ are equivalent if and only if there exists an isomorphism $f\circ q=q'$, i.e. $\ker(q)=\ker(q')$.
Furthermore, there exists a universal quotient 
$$
q_{\mathcal U}: \mathcal O_{Quot(H,\tau_E)} \otimes \mathcal H \rightarrow \mathcal U.
$$
over $Quot(H, \tau_E) \times X$ which restricts to the natural quotient at each point $[q]$. Now we denote $R^{\mu ss}$ as the subscheme of $Quot(\E,\mathcal{H})$ consisting of elements $[q: \mathcal H \rightarrow \E]$ so that 
\begin{itemize}
\item $\E$ is semistable;
\item $\text{det} (\E)=\mathcal{J}$;
\item $\E$ has the same numerical classes as $\E$;
\item $q$ induces an isomorphism between $\C^{\oplus \tau(k)}$ and $H^0(X, \E(k))$.
\end{itemize}
Define $\mathcal{Z}$ as the weak normalization of the reduction of $R^{\mu ss}$. Denote 
$$
q_{\tilde{\mathcal U}}: \mathcal O_{Quot(H,\tau_E)} \otimes \mathcal H \rightarrow \tilde{\mathcal U}
$$
as the pull-back of the universal quotient $[q_{\mathcal U}]$ to $\mathcal{Z} \times X$. Consider the class
$$
u_{n-1}=-\rank(E) c_1(L_1)\cdots c_{1}(L_{n-1})+\chi(c_1(L_1)\cdots c_{1}(L_{n-1}).c(E))[\mathcal O_x]
$$ 
where $x\in X$ is a fixed point. Now consider the line bundle 
$$
\mathcal{L}_{n-1}:=\lambda_{\tilde{\mathcal U}}(u_{n-1})
$$
of which the higher power is a semi-ample line bundle over $\mathcal Z$. Then one can form a formal GIT quotient as 
$$
M^{\mu ss}:=\text{Proj}(\oplus_{k\geq 0} H^0(\mathcal Z, \mathcal
L_{n-1}^{\nu N})^{\SL})
$$
for some $N$. The conclusion is that this is a projective scheme with certain universal properties and the natural surjective map
$
\pi: \mathcal Z \rightarrow M^{\mu ss}
$
collapses the $\SL$ orbits and $\pi(q)=\pi(q')$ only if the sheaves $\E$ and
$\E'$ associated to $q$ and $q'$ share the same graded sheaf
$\Gr^{HNS}(\E)\cong \Gr^{HNS}(\E')$ and $\mathcal C(\E)=\mathcal{C}(\E')$. When $\dim X=2$, the converse holds.

\subsubsection{Complex structure on $\overline{M_{HYM}^*}$ induced from a continuity map $\overline{\Phi}$}
Given a stable unitary bundle over $(E, H,  \db_A)$ over $(X, \omega)$, the
most general version of the Donaldson-Uhlenbeck-Yau theorem states that
there exists a complex gauge transformation $g$ so that the unitary connection
given by $(H, g(\db_A))$ is a HYM connection that is unique up to  unitary
gauge transformations. Now this can be generalized to the case of stable
reflexive sheaf using the notion of \emph{admissible} HYM connections
(i.e.\ finite energy on the smooth locus). Suppose $[q]\in Quot$
represents a semistable torsion free sheaf $\E$. We can take the graded
sheaf $\Gr^{HNS}(\E)$ associated to a Harder-Narasimhan-Seshadri filtration of $\E$. From this we can extract canonical algebraic data as 
$$
((\Gr^{HNS}(\E))^{**},\mathcal{C}(\mathcal{\E}))
$$
from which the first factor gives a unique admissible HYM connection $A(\E)$. Here 
$$\mathcal{C}(\mathcal{\E})=\sum m_{k}^{alg}\Sigma_k$$ 
where $\Sigma_k$ is a pure codimension two subvariety of $X$ and 
$$
m_k^{alg}=h^0(\Delta,((\Gr^{HNS}(\E))^{**}/\Gr^{HNS}(\E))|_{\Delta}).
$$ 
Here $\Delta$ is a generic holomorphic transverse slice of $\Sigma_k$.

\begin{defi}
We define $\overline{M^s}$ to be the closure of $(M^{s})^{wn}$ in $M^{\mu ss}$ where $(M^s)^{wn}$ denotes the weak normalization of $M^s$. 
\end{defi}
Then we have 
\begin{thm}
There exists a continuous map 
$$\overline{\Phi}: \overline{M^s} \rightarrow \overline{M_{HYM}^*}$$ 
which restricts to the natural map
$$\Phi: (M^s)^{wn} \rightarrow (M_{HYM}^*)^{wn}.$$ More precisely, suppose $[q:\mathcal H \rightarrow \E]$ represents a point in $\overline{M^{s}}$, then $\overline{\Phi}([\E])=(A(\E),\mathcal{C}(\E))$.
\end{thm}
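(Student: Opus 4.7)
The plan is to define $\overline{\Phi}$ by the formula $\overline{\Phi}([\E]) = (A(\E),\mathcal{C}(\E))$ stated in the theorem and then verify continuity. The definition is well-posed on $\overline{M^s}$: the pair $((\Gr^{HNS}(\E))^{**},\mathcal{C}(\E))$ is manifestly invariant under $\SL$-orbits in $\mathcal{Z}$ because the Harder-Narasimhan-Seshadri graded sheaf and the cycle data $\mathcal{C}(\E)$ are preserved by $S$-equivalence, matching the collapsing rule of the GIT quotient $\pi : \mathcal{Z} \to M^{\mu ss}$ recalled before the theorem. The admissible HYM connection $A(\E)$ attached to $(\Gr^{HNS}(\E))^{**}$ exists and is unique up to unitary gauge by the Bando-Siu extension theorem together with the Donaldson-Uhlenbeck-Yau correspondence for stable reflexive sheaves on the balanced Gauduchon manifold $(X,\omega)$ coming from the multipolarization. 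On the locus $(M^s)^{wn}$ of honest stable sheaves, this formula reduces to the usual DUY map $\Phi$ tautologically.

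For continuity, I would take a sequence $[\E_i]\to [\E_\infty]$ in $\overline{M^s}$. By the definition of $\overline{M^s}$ as the closure of $(M^s)^{wn}$, each $\E_i$ may be approximated by stable sheaves, so a standard diagonalization reduces to the case where every $\E_i$ is already stable. Let $A_i := A(\E_i)$ be the corresponding smooth HYM connection on $(E,H)$. By the uniform $L^2$-bound guaranteed in Section \ref{Section:Hodge-Riemann} and Theorem \ref{HEKahler}, after passing to a subsequence we obtain a limit $(A_i,\mu_i)\to (A_\infty^{an},\mu_\infty^{an})$ in $\overline{M_{HYM}^*}$, together with an extension of $A_\infty^{an}$ as an admissible HYM connection on a reflexive sheaf $\E_\infty^{an}$ on $X$ by Proposition \ref{extension}. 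The task is to identify $(\E_\infty^{an},\mu_\infty^{an})$ with $((\Gr^{HNS}(\E_\infty))^{**},\mathcal{C}(\E_\infty))$.

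The identification proceeds in two steps, following the strategy of \cite{GSTW:18}. For the sheaf factor, I would track holomorphic subsheaves along the family using Hitchin-Kobayashi together with an $L^2$-estimate on endomorphisms of $(E,H)$ to produce, as $i\to\infty$, a polystable reflexive limit $\E_\infty^{an}$; the uniqueness clause of DUY applied summand by summand forces $\E_\infty^{an}\cong (\Gr^{HNS}(\E_\infty))^{**}$, because both carry admissible HYM connections with the same underlying topology and the same slope-polystable decomposition. The semicontinuity of the HNS filtration under GIT degeneration, plus Mehta-Ramanathan type restriction to a generic complete intersection, is the crucial ingredient here, and in the multipolarized setting these inputs are supplied by \cite{GrebToma:17}. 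For the measure factor, Proposition \ref{extension} already gives the decomposition $\mu_\infty^{an} = |F_{A_\infty^{an}}|^2\,dV + \sum_k m_k\,\mathcal{H}^{2m-4}_{\Sigma_k}$ with integer multiplicities, so what remains is the equality $m_k = m_k^{alg}$.

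The main obstacle is matching these analytic and algebraic multiplicities. My plan, following \cite{GSTW:18}, is to reduce to a two-dimensional problem by restricting the entire family $\E_i$ to a generic complete intersection surface $S \subset X$ cut by sections of $L_1,\ldots,L_{m-2}$. On such $S$ the Greb-Toma multipolarized Mehta-Ramanathan theorem preserves semistability, the analytic bubbling set $\Sigma$ meets $S$ transversally in isolated points, and the algebraic cycle $\mathcal{C}(\E_\infty)$ intersects $S$ with multiplicities $h^0(\Delta,((\Gr^{HNS}(\E_\infty))^{**}/\Gr^{HNS}(\E_\infty))|_\Delta)$, where $\Delta$ is a transverse slice at each intersection point. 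On $S$ the comparison is precisely the four-dimensional statement at the heart of the Donaldson-Kronheimer compactification, where the equality of analytic bubbling multiplicities with the algebraic length of $((\Gr^{HNS}(\E_\infty))^{**}/\Gr^{HNS}(\E_\infty))|_\Delta$ is classical. Because this restriction procedure is compatible with the Hodge-Riemann structure, which provides the uniform $L^2$-bound needed to apply Theorem \ref{Theorem: c-HYM} on $S$, the multiplicities agree, and the continuity of $\overline{\Phi}$ follows.
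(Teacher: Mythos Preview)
Your outline --- reduce to a sequence of stable sheaves, pass to an analytic Uhlenbeck limit, then identify the reflexive sheaf and match multiplicities --- is the right shape, but both identification steps have genuine gaps as written. For the sheaf factor, you assert that ``both carry admissible HYM connections with the same underlying topology and the same slope-polystable decomposition,'' yet the equality of polystable decompositions is precisely the point at issue; uniqueness of DUY alone cannot force $\E_\infty^{an}\cong (\Gr^{HNS}(\E_\infty))^{**}$ without some map relating the two. The paper (following \cite{GSTW:18}) supplies this mechanism explicitly: one embeds the $\E_i$ in a fixed Quot scheme using $L^2$-orthonormal bases of $H^0(X,\E_i(k))$, extracts an algebraic limit $[q_\infty]$ representing a sheaf $\F_\infty^{alg}$, and shows that $q_\infty$ induces a sheaf inclusion $\F_\infty^{alg}\hookrightarrow \E_\infty^{an}$ which is an isomorphism in codimension two, whence $\E_\infty^{an}=(\F_\infty^{alg})^{**}$. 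Your sketch omits this construction entirely.

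For the multiplicities, the paper does \emph{not} restrict to a surface; it applies the singular Bott-Chern formula (as in \cite[Sec.\ 4.3]{GSTW:18}) to the filtration of $\mathcal{H}$ induced by $[q_\infty]$ to conclude $\mathcal{C}(\F_\infty^{alg})=\mathcal{C}^{an}$ directly on $X$. Your proposed reduction to a complete intersection surface $S$ runs into the difficulty that HYM connections for $\omega$ on $X$ do not restrict to HYM (or even ASD) connections on $S$, so the bubbling analysis on $X$ does not descend to the Donaldson--Kronheimer picture on $S$ in any straightforward way; in particular the claim that the equality $m_k=m_k^{alg}$ on $S$ is ``classical'' and inherits back to $X$ is not justified. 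Mehta--Ramanathan controls the \emph{algebraic} side under restriction, but you have given no argument that the \emph{analytic} defect measure of the $A_i$ slices correctly. The Bott--Chern argument bypasses this problem altogether.
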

We very briefly explain how the proof is done and refer the reader to
\cite{GSTW:18} for more details. We fix a sequence of smooth HYM
connections $\{A_i\}$ on $E$ which sub-converges to $(A_\infty, \mathcal
C^{an} )$. By the boundedness, we can put $\E_i=(E,\db_{A_i})$ in a fixed
Quot scheme and thus obtain an algebraic limit which can behave badly in
general. More precisely, by fixing $k$ large and choosing an $L^2$
orthonormal basis for $H^0(X, \E_i(k))$, we get a sequence of elements
$[q_i]$ in the corresponding Quot scheme. Then we can take an algebraic
limit $[q_\infty]$ of $[q_i]$ in the Quot scheme. As in
\cite[Sec.\ 4]{GSTW:18},  it can be concluded that $q_\infty$ induces a sheaf inclusion $\F_\infty^{alg}\rightarrow \E_\infty$ which is an isomorphism outside some codimension two subvariety. In particular, $\E_{\infty}=(\F_\infty^{alg})^{**}$. Using the argument in 
\cite[Sec.\ 4.3]{GSTW:18}, the singular Bott-Chern formula applied to the filtration of $\mathcal{H}$ induced by $[q_\infty]$ gives 
$\mathcal{C}(\mathcal F_\infty^{alg})=\mathcal C
$.
In particular, as in \cite{GSTW:18},
this gives that the map $\overline{\Phi}$ is continuous. Given this, since all the essential algebraic geometric results \cite{GrebToma:17} used in \cite{GSTW:18} are done for multipolarizations, it is straightforward to adapt the corresponding statements in \cite{GSTW:18}
to the case of multipolarizations to obtain the following
\begin{thm}
There exists a complex structure on $\overline{M^*_{HYM}}$ which makes $\overline{M^*_{HYM}}$ an algebraic space so that the natural map $\overline{\Phi}: \overline{M^{s}}\rightarrow \overline{M^*_{HYM}}$ is an algebraic morphism. 
\end{thm}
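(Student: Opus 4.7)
The plan is to follow the strategy of \cite{GSTW:18} essentially verbatim, with the algebraic input of \cite{GrebToma:17} (whose theory is developed for multipolarizations from the start) replacing the single-polarization constructions. The earlier results in this section have already produced the key analytic ingredients: weak sequential compactness of $\overline{M_{HYM}^*}$ as a Hausdorff space (Theorem \ref{HEKahler}), the reflexive extension $\Escr_\infty$ and holomorphic cycle $\Ccal^{an}$ attached to an analytic limit (Proposition \ref{extension}), and the continuous surjection $\overline{\Phi}:\overline{M^s}\to\overline{M_{HYM}^*}$ defined via $[q:\mathcal H\to \Escr]\mapsto (A(\Escr),\Ccal(\Escr))$.

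First I would show that $\overline{\Phi}$ descends to a continuous bijection from a suitable quotient of $\overline{M^s}$ onto $\overline{M_{HYM}^*}$. Concretely, the GIT quotient structure of $M^{\mu ss}$ built from the line bundle $\Lcal_{n-1}$ collapses precisely the $\SL$-orbits whose closures meet, and by the results of \cite{GrebToma:17} two points $[q],[q']$ have the same image in $M^{\mu ss}$ exactly when the associated sheaves have isomorphic graded objects $\mathrm{Gr}^{HNS}$ and equal codimension-two cycles $\Ccal(\Escr)=\Ccal(\Escr')$. On the analytic side, the pair $(A(\Escr),\Ccal(\Escr))$ only depends on $((\mathrm{Gr}^{HNS}(\Escr))^{\ast\ast},\Ccal(\Escr))$ by the Bando–Siu/DUY theorem for admissible HYM connections. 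Hence $\overline{\Phi}$ factors as $\overline{M^s}\twoheadrightarrow \overline{M^s}/\!\sim\;\xrightarrow{\overline{\phi}}\overline{M_{HYM}^*}$ with $\overline{\phi}$ a continuous bijection of Hausdorff spaces, where $\overline{M^s}/\!\sim$ carries the quotient complex-analytic / algebraic structure inherited from the GIT construction.

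Next I would verify that $\overline{\phi}$ is a homeomorphism, i.e.\ that its inverse is continuous. This is where I would invoke the diagonal-sequence argument of \cite[Sec.\ 4.3]{GSTW:18}: given a convergent sequence $(A_i,\Ccal_i^{an})\to (A_\infty,\Ccal^{an})$ in $\overline{M_{HYM}^*}$, lift each point to $[q_i]\in \overline{M^s}$, take an algebraic subsequential limit $[q_\infty]$ in the Quot scheme (which exists by properness), and then identify $\overline{\Phi}([q_\infty])=(A_\infty,\Ccal^{an})$ using the singular Bott–Chern formula of \cite{GSTW:18} applied to the filtration of $\mathcal H$ induced by $q_\infty$; this forces $\Ccal(\Fcal_\infty^{alg})=\Ccal^{an}$ and $((\mathrm{Gr}^{HNS}(\Fcal_\infty^{alg}))^{\ast\ast}$ to be gauge-equivalent to $(E_\infty,A_\infty)$. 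Together with the uniqueness part of Theorem \ref{HEKahler}, this shows the limit point is unique in the quotient and the inverse of $\overline{\phi}$ is sequentially continuous, hence continuous since $\overline{M_{HYM}^*}$ is first countable.

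Finally, the complex structure is transferred: $\overline{M^s}/\!\sim\;$ is an algebraic space as a good quotient of an algebraic space by a reductive group, so $\overline{\phi}$ endows $\overline{M_{HYM}^*}$ with the same structure and the composition $\overline{\Phi}$ is an algebraic morphism by construction. The main obstacle is the fiber analysis in the middle step: one must show that the analytic S-equivalence (same Bando–Siu admissible HYM connection plus same analytic cycle) matches the algebraic S-equivalence with respect to the multipolarization $(\omega_0,\ldots,\omega_{m-2})$. For the classical polarized setting this is the content of \cite{GSTW:18}; its extension here rests on the fact that \cite{GrebToma:17} develops Harder–Narasimhan–Seshadri theory, boundedness, and the GIT construction for multipolarizations identically to the polarized case, and on the balanced/Hodge–Riemann hypothesis which supplies the uniform $L^2$-curvature bound required to put all $\Escr_i=(E,\bar\partial_{A_i})$ simultaneously inside a single Quot scheme. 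Once these are in place, every step of \cite[Sec.\ 4]{GSTW:18} transposes without change.
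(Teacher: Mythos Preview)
Your proposal follows the same strategy as the paper, which itself simply defers to \cite{GSTW:18} and \cite{GrebToma:17} once continuity of $\overline{\Phi}$ is in hand; your sketch is in fact more detailed than what the paper provides.

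One slip is worth flagging: you assert that two points $[q],[q']$ have the same image in $M^{\mu ss}$ \emph{exactly when} $\Gr^{HNS}(\E)\cong\Gr^{HNS}(\E')$ and $\mathcal{C}(\E)=\mathcal{C}(\E')$, but the paper (following \cite{GrebToma:17}) records this only as an ``only if'' when $\dim X>2$. Consequently the map induced by $\overline{\Phi}$ on $\pi(\overline{M^s})\subset M^{\mu ss}$ need not be injective, and the algebraic-space structure on $\overline{M^*_{HYM}}$ arises from a \emph{further} quotient beyond the GIT identification. This is precisely the ``fiber analysis'' you correctly identify as the main obstacle and defer to \cite{GSTW:18}; your overall route is unchanged, but the bijectivity of $\overline{\phi}$ cannot be read off directly from the GIT equivalence as you phrase it in the first step.
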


\bibliography{../papers}

\end{document}